\newtheorem{thm}{Theorem}
\newtheorem{lem}{Lemma}[subsection]
\newtheorem{prop}{Proposition}[subsection]
\newtheorem{cor}{Corollary}[subsection]
\newtheorem{ax}{Remark}[subsection]
\numberwithin{equation}{section}
\begin{document}

\title{Quasi-particle bases of principal subspaces for the affine Lie algebras of types  \texorpdfstring{$B_\MakeLowercase{l}^{(1)}$}{B_l^{(1)}} and \texorpdfstring{$C_\MakeLowercase{l}^{(1)}$}{C_l^{(1)}}}

\author{Marijana Butorac}

\address{University of Rijeka, Department of Mathematics, Radmile Matej\v{c}i\'{c} 2, 51000 Rijeka, Croatia}

\email{mbutorac@math.uniri.hr}

\subjclass[2000]{Primary 17B67; Secondary 17B69, 05A19}

\keywords{affine Lie algebras, vertex operator algebras, principal subspaces, quasi-particle bases}
\begin{abstract} 
Generalizing our earlier work, we construct quasi-particle bases of principal subspaces of standard module $L_{X_l^{(1)}}(k\Lambda_0)$ and generalized Verma module $N_{X_l^{(1)}}(k\Lambda_0)$ at level $k\geq 1$ in the case of affine Lie algebras of types $B_l^{(1)}$ and $C_l^{(1)}$. As a consequence, from quasi-particle bases, we obtain the graded dimensions of these subspaces. \end{abstract}

\maketitle

\section*{Intorduction}
\par Let $\mathfrak{g}$ be a simple complex Lie algebra of type $X_l$, with a Cartan subalgebra $\mathfrak{h}$, the set of simple roots $\Pi=\left\{\alpha_1, \ldots, \alpha_l\right\}$ and the triangular decomposition $\mathfrak{g}=\mathfrak{n}_{-}\oplus\mathfrak{h}\oplus\mathfrak{n}_{+}$, where $\mathfrak{n}_{+}$ is a direct sum of its one dimensional subalgebras corresponding to the positive roots. 
Denote by $\mathcal{L}({\mathfrak{n}}_{+})$ a subalgebra of untwisted affine Lie algebra $\widehat{\mathfrak{g}}$ of type $X_l^{(1)}$ 
\begin{equation*}\mathcal{L}(\mathfrak{n}_{+})=\mathfrak{n}_{+} \otimes \mathbb{C}[t,t^{-1}].
\end{equation*}
Let $V$ be a highest $\widehat{\mathfrak{g}}$-module with highest weight $\Lambda$ and highest weight vector $v_{\Lambda}$. We define the principal subspace $W_{V}$ of $V$ as
\begin{equation*} W_{V}=
U(\mathcal{L}(\mathfrak{n}_{+}))v_{\Lambda}.
\end{equation*}
In this paper we study principal subspaces of the generalized Verma module $N_{X_l^{(1)}}(k{\Lambda}_{0})$  and its irreducible quotient $L_{X_l^{(1)}}(k{\Lambda}_{0})$  at level $k\geq 1$, defined over the affine Lie algebras of type $B_l^{(1)}$ and $C_l^{(1)}$.
\par The study of principal subspaces of standard (i.e., integrable highest weight) modules of the simply laced affine Lie algebras 
and its connection to Rogers-Ramanujan identities was initiated in the work of B. L. Feigin and A. V. Stoyanovsky \cite{FS} and has been further de\-veloped in \cite{AKS}, \cite{Cal1}--\cite{Cal2}, \cite{CalLM1}--\cite{CalLM4}, \cite{CLM1}--\cite{CLM2}, \cite{G}, \cite{Ko}, \cite{KP}, \cite{MiP}, \cite{S1}--\cite{S2}. 
\par Quasi-particle descriptions of principal subspaces of standard modules for untwisted affine Kac-Moody algebras ori\-ginate from the work of Feigin and Stoyanovsky \cite{FS} and G. Georgiev \cite{G}. In order to compute the character formulas of standard $A^{(1)}_1$-modules, Feigin and Stoyanovsky have constructed monomial bases of principal subspaces of the standard modules in terms of the expansion coefficients of a certain vertex operators (cf. \cite{DL}, \cite{LL}). These monomial bases have an interesting physical interpretation, as quasi-particles, whose energies comply the difference-two condition (see in particular \cite{DKKMM}, \cite{FS}, \cite{G}). 
\par Later on, Georgiev in \cite{G} generalized the construction of quasi-particle bases to principal subspaces of certain standard modules in the $ADE$ type. His bases were built of quasi-particles $x_{r\alpha_i}(m)$ of color $i$ ($1\leq i \leq l$), charge $r\geq 1$ and energy $-m$   
\begin{equation*}
x_{r\alpha_{i}}(m)=\text{Res}_z \left\{ z^{m+r-1}\underbrace{x_{\alpha_{i}}(z) \cdots  x_{\alpha_{i}}(z)}_{\text{r factors}}\right\},
\end{equation*}
where $x_{\alpha_{i}}(z)=\sum_{j\in \mathbb Z} x_{\alpha_i}(j) z\sp{-j-1}$ are vertex operators associated to elements $x_{\alpha_i} \in N_{A_l^{(1)}}(k\Lambda_0)$. From this bases Georgiev obtained character formulas, which are in the case of $A_1^{(1)}$ character formulas first obtained by  J. Lepowsky and M. Primc in \cite{LP}.   
\par In our previous paper \cite{Bu} we have used ideas of Georgiev to construct quasi-particle bases of principal subspaces of level $k \geq 1$ standard module $L_{B_2^{(1)}}(k{\Lambda}_{0})$ and generalized Verma module $N_{B_2^{(1)}}(k{\Lambda}_{0})$ of an affine Lie algebra of type $B_2^{(1)}$.  From the graded dimensions (characters) of principal subspaces of generalized Verma module we obtained a new identity of Rogers-Ramanujan's type.
\par Our present work is a generalization of \cite{Bu} to the case of $B_l^{(1)}$, $l\geq 3$, and to the case of $C_l^{(1)}$, $l\geq 3$. 
Our methods for these cases are the same as the methods that we used in \cite{Bu}. First, using relations among vertex operators associated with the simple roots $\alpha_i \in\Pi$, we find spanning sets of principal subspaces, which are built of quasi-particles of colors $i$, $1 \leq i \leq l$, and charges $r \geq 1$ acting on the highest weight vectors. 
\par In the case of affine Lie algebra of type $B_l^{(1)}$ these quasi-particle monomials are of form 
\begin{equation*}
b(\alpha_{l})b(\alpha_{l-1})\cdots b(\alpha_{1}),
\end{equation*}
where $b(\alpha_i)$ is a product of quasi-particles corresponding to simple root $\alpha_i \in\Pi$. From combinatorial point of view, difference conditions for energies of quasi-particles of colors $i$, $1 \leq i \leq l-2$, are identical with the difference conditions for energies of Georgiev's quasi-particles in the case of standard $A_{l-1}^{(1)}$-modules of level $k$ and difference conditions for energies of quasi-particles of colors $l-1$ and $l$ are the same as difference conditions for energies for level $k$ given in \cite{Bu}. 
\par In the case of $C_l^{(1)}$, quasi-particle monomials in the spanning sets are of form 
\begin{equation*}
 b(\alpha_{1})\cdots b(\alpha_{l-1})b(\alpha_{l}),
\end{equation*}
where difference conditions for energies of quasi-particles colored with colors $l$ and $l-1$ are identical as difference conditions for energies of quasi-particles for level $k$ $B_2^{(1)}$-modules, and difference conditions for energies of quasi-particles of colors $i$, $1 \leq i \leq l-2$, are identical with difference conditions for energies of quasi-particles in the case of standard $A_{l-1}^{(1)}$-modules of level $2k$.  
\par For the purpose of proving the linear independence of spanning sets, we use a projection of principal subspaces on the tensor product of $\mathfrak{h}$-weight subspaces of standard modules defined in \cite{Bu}. The projection enables the usage of certain coefficients of intertwining operators, simple current operators and \enquote{Weyl group translation} operator defined on the level one standard modules. We prove linear independence by induction on the order on quasi-particle monomials. Important argument in the proof will be the linear independence of quasi-particle vectors from \cite{Bu} for the $B_2^{(1)}$ case and linear independence of $A_{l-1}^{(1)}$ monomial vectors obtained in \cite{G}.
\par The main results of this paper are character formulas for principal subspaces of standard modules $L_{X_l^{(1)}}(k{\Lambda}_{0})$ (Theorem \ref{uvodBl1} and Theorem \ref{uvodCl1}) and principal subspaces of generalized Verma modules $N_{X_l^{(1)}}(k{\Lambda}_{0})$ (Theorem \ref{uvodBl2} and Theorem \ref{uvodCl2}). As a consequence, we also obtained two new identities, which are generalization of identity from \cite{Bu}. The first one was obtained from character formulas of principal subspace of $N_{B_l^{(1)}}(k{\Lambda}_{0})$ in the $B_l^{(1)}$ case
\begin{thm}
\begin{equation*}
\prod_{m > 0}\frac{1}{(1-q^my_1)}\frac{1}{(1-q^my_1y_2)}\cdots \frac{1}{(1-q^my_1\cdots y_l)}\frac{1}{(1-q^my_1y_2^2\cdots y_l^2)}\cdots \frac{1}{(1-q^my_1y_2\cdots y_l^2)} 
\end{equation*}
\begin{equation*}
\frac{1}{(1-q^my_2)}\frac{1}{(1-q^my_2y_3)}\cdots \frac{1}{(1-q^my_2\cdots y_l)}\frac{1}{(1-q^my_2y_3^2\cdots y_l^2)}\cdots \frac{1}{(1-q^my_2y_3\cdots y_l^2)}
\end{equation*}
\begin{equation*}
\ \ \ \ \ \ \ \ \cdots 
\end{equation*}
\begin{equation*}
\frac{1}{(1-q^{l-1})}\frac{1}{(1-q^my_{l-1}y_l)} \frac{1}{(1-q^my_{l-1}y_l^2)} \frac{1}{(1-q^my_l)} 
\end{equation*}
\begin{equation*}
= \sum_{\substack{r^{(1)}_{1}\geq \ldots \geq r^{(u_1)}_{1}\geq 0\\ u_1\geq0 }}
\frac{q^{r^{(1)^{2}}_{1}+\cdots +r^{(u_1)^{2}}_{1}}}{(q)_{r^{(1)}_{1}-r^{(2)}_{1}}\cdots (q)_{r^{(u_1)}_{1}}}y^{r_1}_{1} 
\end{equation*}
\begin{equation*}
 \sum_{\substack{r^{(1)}_{2}\geq \ldots \geq r^{(u_2)}_{2}\geq 0\\ u_2\geq 0}}
\frac{q^{r^{(1)^{2}}_{2}+\cdots +r^{(u_2)^{2}}_{2}-r_1^{(1)}r_2^{(1)}-\cdots -r_1^{(u_2)}r_2^{(u_2)}}}{(q)_{r^{(1)}_{2}-r^{(2)}_{2}}\cdots (q)_{r^{(u_2)}_{2}}}y^{r_2}_{2} 
\end{equation*}
\begin{equation*}
\ \ \ \ \ \ \ \ \cdots 
\end{equation*}
\begin{equation*}
\sum_{\substack{r^{(1)}_{l-1}\geq \ldots \geq r^{(u_{l-1})}_{l-1}\geq 0\\ u_{l-1}\geq0}}
\frac{q^{r^{(1)^{2}}_{l-1}+\cdots +r^{(u_{l-1})^{2}}_{l-1}-r_{l-2}^{(1)}r_{l-1}^{(1)}-\cdots -r_{l-2}^{(u_{l-1})}r_{l-1}^{(u_{l-1})}}}{(q)_{r^{(1)}_{l-1}-r^{(2)}_{l-1}}\cdots (q)_{r^{(u_{l-1})}_{l-1}}}y^{r_{l-1}}_{l-1}
\end{equation*}
\begin{equation*}
\sum_{\substack{r^{(1)}_{l}\geq \ldots \geq r^{(2u_{l})}_{l}\geq  0\\ u_{l}\geq 0}}
\frac{q^{r^{(1)^{2}}_{l}+\cdots +r^{(2u_{l})^{2}}_{l}-r_{l-1}^{(1)}(r_{l}^{(1)}+r_{l}^{(2)})
-\cdots -r_{l-1}^{(u_{l})}(r_{l}^{(2u_{l}-1)}+r_{l}^{(2u_{l})})}}{(q)_{r^{(1)}_{l}-r^{(2)}_{l}}\cdots (q)_{r^{(2u_{l})}_{l}}}
y^{r_l}_{l}.
\end{equation*}
\end{thm}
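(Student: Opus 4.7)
I would prove the identity by computing the graded dimension of $W_{N_{B_l^{(1)}}(k\Lambda_0)}$ in two ways, with $q$ tracking the $L(0)$-grading and $y_1, \ldots, y_l$ tracking the $\mathfrak{h}$-weight grading via the simple roots. The theorem's identity is then the equality of the two resulting expressions.

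For the product side, since $N_{B_l^{(1)}}(k\Lambda_0)$ is a generalized Verma module induced from the parabolic $\mathfrak{g} \otimes \mathbb{C}[t] \oplus \mathbb{C}c$, a PBW argument yields a graded linear isomorphism $U(\mathcal{L}(\mathfrak{n}_+)^-) \to W_{N_{B_l^{(1)}}(k\Lambda_0)}$, $u \mapsto u v_0$, where $\mathcal{L}(\mathfrak{n}_+)^- = \mathfrak{n}_+ \otimes t^{-1}\mathbb{C}[t^{-1}]$. Applying PBW to $\mathcal{L}(\mathfrak{n}_+)^-$, whose natural basis consists of root vectors $x_\alpha \otimes t^{-n}$ for $\alpha$ a positive root of $B_l$ and $n \geq 1$, gives the character $\prod_{\alpha > 0} \prod_{n \geq 1} (1 - q^n y^\alpha)^{-1}$; enumerating the positive roots of $B_l$ grouped by the smallest simple root appearing in each — the short root $\varepsilon_i = \alpha_i + \cdots + \alpha_l$, the $A$-type roots $\varepsilon_i - \varepsilon_j = \alpha_i + \cdots + \alpha_{j-1}$ for $i < j$, and the roots $\varepsilon_i + \varepsilon_j = \alpha_i + \cdots + \alpha_{j-1} + 2(\alpha_j + \cdots + \alpha_l)$ for $i < j$ — reproduces exactly the factors on the left-hand side of the theorem, with the $(2l{-}2i{+}1)$ factors in the $i$-th line corresponding to the $(2l{-}2i{+}1)$ positive roots whose smallest simple root is $\alpha_i$.

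For the sum side I would invoke the quasi-particle basis of $W_{N_{B_l^{(1)}}(k\Lambda_0)}$ constructed in the earlier sections of the paper, whose elements are ordered monomials $b(\alpha_l) b(\alpha_{l-1}) \cdots b(\alpha_1) v_0$ with charges of each color $i$ forming a partition $r_i^{(1)} \geq r_i^{(2)} \geq \cdots \geq 0$ and energies satisfying the stated difference conditions. Grouping basis elements by their charge tuple and summing $q^{\mathrm{energy}}$ over admissible energy assignments, a standard Durfee-square computation yields the denominators $(q)_{r_i^{(j)} - r_i^{(j+1)}}^{-1}$ multiplied by $q$ raised to the minimum admissible total energy; the minimum supplies the quadratic form $\sum (r_i^{(j)})^2$ from the within-color difference-$2$ conditions and the cross terms $-\sum r_{i-1}^{(j)} r_i^{(j)}$ from the adjacency interactions, while the $y$-weight reads as $\prod_i y_i^{r_i}$ with $r_i = \sum_j r_i^{(j)}$; the doubled index count $2u_l$ in the color-$l$ block reflects the pairing of the $\alpha_l$ quasi-particles already observed in the level-$k$ $B_2^{(1)}$ computation of \cite{Bu}, due to $\alpha_l$ being the short simple root of $B_l$. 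The deepest input here is the existence and linear independence of the quasi-particle basis, established by the preceding sections via the projection onto tensor products of $\mathfrak{h}$-weight subspaces; granting that, the main remaining obstacle is purely combinatorial bookkeeping — matching the minimum-energy calculation against the precise quadratic form on the right-hand side, especially in the color-$l$ block whose pairing structure differs from the $A_{l-1}^{(1)}$-type pattern governing colors $1,\ldots,l-2$.
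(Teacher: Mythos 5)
Your proposal is correct and follows essentially the same route as the paper: the paper likewise equates the PBW character of $U(\mathcal{L}(\mathfrak{n}_+)_{<0})\cong W_{N_{B_l^{(1)}}(k\Lambda_0)}$ (the product over positive roots of $B_l$, via the explicit basis (\ref{102B})) with the fermionic sum obtained from the quasi-particle basis of Theorem \ref{prop:S22BN}, using the min-sum identities of Lemma \ref{S7L1B} and the partition identity (\ref{S7KB}) to produce the quadratic form and the $(q)_r$ denominators. Your reading of the $2u_l$ pairing in the color-$l$ block as coming from the $\min\{n_{p,l},2n_{q,l-1}\}$ interaction, i.e. identity (\ref{S710B}), matches the paper's bookkeeping.
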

In the $C_l^{(1)}$ case we get the following identity
\begin{thm}
\begin{equation*}
\prod_{m > 0} \frac{1}{(1-q^my_1)}\frac{1}{(1-q^my_1y_2)}\cdots \frac{1}{(1-q^my_1\cdots y_{l-1})}\frac{1}{(1-q^my_1y_2^2\cdots y_l^2)}
\end{equation*}
\begin{equation*}
\ \ \ \ \ \ \ \cdots \frac{1}{(1-q^my_1y_2\cdots y_l^2)}\frac{1}{(1-q^my^2_1y^2_2\cdots y_l )} 
\end{equation*}
\begin{equation*}
\frac{1}{(1-q^my_2)}\frac{1}{(1-q^my_2y_3)}\cdots \frac{1}{(1-q^my_2\cdots y_{l-1})}\frac{1}{(1-q^my_2y_3^2\cdots y_l^2)} 
\end{equation*}
\begin{equation*}
\ \ \ \ \ \ \ \cdots \frac{1}{(1-q^my_2y_3\cdots y_l^2)}\frac{1}{(1-q^my^2_2y^2_3\cdots y_l )} 
\end{equation*}
\begin{equation*}
\ \ \ \ \ \ \ \ \cdots 
\end{equation*}
\begin{equation*}
\frac{1}{(1-q^{l-1})}\frac{1}{(1-q^my_{l-1}y^2_l)} \frac{1}{(1-q^my^2_{l-1}y_l)} \frac{1}{(1-q^my_l)}
\end{equation*}
\begin{equation*}
= \sum_{\substack{r^{(1)}_{1}\geq \ldots \geq r^{(2u_1)}_{1}\geq 0\\ u_1\geq0 }}
\frac{q^{r^{(1)^{2}}_{1}+\cdots +r^{(2u_1)^{2}}_{1}}}{(q)_{r^{(1)}_{1}-r^{(2)}_{1}}\cdots (q)_{r^{(2u_1)}_{1}}}y^{r_1}_{1}
\end{equation*}
\begin{equation*}
\sum_{\substack{r^{(1)}_{2}\geq \ldots \geq r^{(2u_2)}_{2}\geq 0\\ u_2\geq 0}}
\frac{q^{r^{(1)^{2}}_{2}+\cdots +r^{(2u_2)^{2}}_{2}-r_1^{(1)}r_2^{(1)}-\cdots -r_1^{(2u_2)}r_2^{(2u_2)}}}{(q)_{r^{(1)}_{2}-r^{(2)}_{2}}\cdots (q)_{r^{(2u_2)}_{2}}}y^{r_2}_{2}
\end{equation*}
\begin{equation*}
\ \ \ \ \ \ \ \ \cdots  
\end{equation*}
\begin{equation*}
\sum_{\substack{r^{(1)}_{l-1}\geq \ldots \geq r^{(2u_{l-1})}_{l-1}\geq 0\\ u_{l-1}\geq0}}
\frac{q^{r^{(1)^{2}}_{l-1}+\cdots +r^{(2u_{l-1})^{2}}_{l-1}-r_{l-2}^{(1)}r_{l-1}^{(1)}-\cdots -r_{l-2}^{(2u_{l-1})}r_{l-1}^{(2u_{l-1})}}}{(q)_{r^{(1)}_{l-1}-r^{(2)}_{l-1}}\cdots (q)_{r^{(2u_{l-1})}_{l-1}}}y^{r_{l-1}}_{l-1}  
\end{equation*}
\begin{equation*}
\sum_{\substack{r^{(1)}_{l}\geq \ldots \geq r^{(u_{l})}_{l}\geq  0\\ u_{l}\geq 0}}
\frac{q^{r^{(1)^{2}}_{l}+\cdots +r^{(u_{l})^{2}}_{l}-r_{l}^{(1)}(r_{l-1}^{(1)}+r_{l-1}^{(2)})
-\cdots -r_{l}^{(u_{l})}(r_{l-1}^{(2u_{l}-1)}+r_{l-1}^{(2u_{l})})}}{(q)_{r^{(1)}_{l}-r^{(2)}_{l}}\cdots (q)_{r^{(u_{l})}_{l}}}
y^{r_l}_{l}. 
\end{equation*}
\end{thm}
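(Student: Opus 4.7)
The plan is to derive the identity by constructing a quasi-particle monomial basis of the principal subspace $W_{N_{C_l^{(1)}}(k\Lambda_0)}$ and computing its graded dimension in two different ways, once to obtain the right-hand side multi-sum and once to obtain the left-hand side infinite product. Following the approach of \cite{Bu}, I would begin by fixing the quasi-particle operators $x_{r\alpha_i}(m)=\text{Res}_z\, z^{m+r-1}x_{\alpha_i}(z)^{r}$ for $1\leq i\leq l$ and $r\geq 1$, and considering monomials of the form $b(\alpha_1)\cdots b(\alpha_{l-1})b(\alpha_l)v_{k\Lambda_0}$, where each $b(\alpha_i)$ is a product of color-$i$ quasi-particles ordered by decreasing charge and then by decreasing energy. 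Using the commutator relations between the $x_{\alpha_i}(z)$'s together with integrability (at level $2k$ on the $A_{l-1}$-subsystem of short simple roots and at level $k$ on the $B_2$-subsystem $\{\alpha_{l-1},\alpha_l\}$), I would cut the PBW spanning set down to the admissible monomials satisfying the difference conditions already described in the introduction.

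For linear independence I would adapt the projection argument from \cite{Bu}: embed $W_{N_{C_l^{(1)}}(k\Lambda_0)}$ into a suitable tensor product of level-one standard modules, project onto appropriate $\mathfrak{h}$-weight subspaces, and use coefficients of intertwining operators, simple current operators, and the Weyl-translation operator on level-one modules to isolate the contribution of the leading admissible monomial in an assumed linear relation. Stripping off that leading quasi-particle would reduce the residual relation to one living in a smaller $A_{l-1}^{(1)}$- or $B_2^{(1)}$-subproblem, whereupon the linear independence results of \cite{G} (for $A_{l-1}^{(1)}$ at level $2k$) and \cite{Bu} (for $B_2^{(1)}$ at level $k$) apply directly. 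I expect this inductive step to be the main obstacle: the projections must be tuned so that the neighbor interaction between colors $l-1$ and $l$ (the $B_2$-block) cleanly decouples from the $A_{l-1}$-block, so that both already-known linear independence results can be invoked consistently.

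With admissibility and linear independence in hand, the right-hand side is obtained by summing the graded weights over admissible monomials: the outer $u_i$-sums count the number of distinct color-$i$ charge-blocks, the $r^{(s)}_i$ record the individual charges, the $q^{(r^{(s)}_i)^{2}}$-factors come from the minimum-energy bound for an isolated color-$i$ quasi-particle of charge $r^{(s)}_i$, the cross terms $-r_{i-1}^{(s)}r_i^{(s)}$ (with the $B_2$-type modification involving pairs $(r_{l-1}^{(2u_l-1)},r_{l-1}^{(2u_l)})$ for the color-$l$ block) encode the neighbor-color difference conditions, and the $(q)$-denominators encode the partition freedom within a single charge-block. The left-hand product is the same character computed the other way: regrouping admissible monomials by weight-type produces one factor $(1-q^m y^\beta)^{-1}$ per quasi-particle type of weight $\beta$ (with the powers of $y_l$ reflecting the long-root normalization in $C_l$), and equating the two resulting expressions for the graded dimension of $W_{N_{C_l^{(1)}}(k\Lambda_0)}$ yields the identity.
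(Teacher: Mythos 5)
Your overall strategy coincides with the paper's: build a quasi-particle spanning set of $W_{N_{C_l^{(1)}}(k\Lambda_0)}$, prove linear independence by projecting into tensor products of level-one modules and using coefficients of intertwining operators, simple currents and the Weyl translation operator (reducing to the known $A_{l-1}^{(1)}$ and $B_2^{(1)}$ results), and then read off the multi-sum character from the difference conditions via the counting identities and $\frac{1}{(q)_r}=\sum_j p_r(j)q^j$. That part of the proposal is sound and is essentially what the paper does.

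There is, however, a genuine gap in how you obtain the left-hand side. You claim the product arises by \enquote{regrouping admissible monomials by weight-type,} with \enquote{one factor $(1-q^my^{\beta})^{-1}$ per quasi-particle type of weight $\beta$.} This cannot work: quasi-particles are attached only to simple roots, so their weights are of the form $r\alpha_i$, whereas the factors in the product are indexed by the $l^2$ positive roots of $C_l$ (e.g.\ $y_1y_2^2\cdots y_l^2$ and $y_1^2y_2^2\cdots y_l$ correspond to $\frac{1}{\sqrt2}(\epsilon_1+\epsilon_l)$ and $\theta=\sqrt2\,\epsilon_1$), which are not weights of any quasi-particle. The paper instead uses the vector-space isomorphism $W_{N_{C_l^{(1)}}(k\Lambda_0)}\cong U(\mathcal{L}(\mathfrak{n}_+)_{<0})$ (freeness of the generalized Verma module) and the Poincar\'e--Birkhoff--Witt basis built from root vectors $x_{\alpha}(m)$, $\alpha\in R_+$, $m<0$; the product is the character of that PBW basis, computed independently of the quasi-particle basis. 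A secondary inaccuracy: for $N(k\Lambda_0)$ the integrability relations $x_{\alpha}(z)^{k+1}=0$, $x_{\beta}(z)^{2k+1}=0$ do \emph{not} hold, so charges are unbounded and only the same-color and different-color quasi-particle relations may be used to cut down the spanning set; invoking integrability here conflates the $N(k\Lambda_0)$ and $L(k\Lambda_0)$ cases.
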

\par The plan of the paper is as follows. In Section \ref{s:pril} we recall some fundamental results concerning affine Lie algebras and their modules. Next, we introduce a notion of a quasi-particle and relations among quasi-particles of the same color. In Section \ref{s:psqpm}, we recall the definition of the principal subspace. In Section \ref{Bl3} we construct quasi-particle bases of principal subspaces of standard module $L_{B_l^{(1)}}(k\Lambda_0)$ and genrealized Verma module $N_{B_l^{(1)}}(k\Lambda_0)$ of $B_l^{(1)}$. We will start with finding relations among quasi-particles of different colors. Using these relations along with relations among quasi-particles of the same color we will construct the spanning sets of principal subspaces. Then we will introduce operators which we use in the proof of linear independance. At the end of this section we will find character formulas.  Section \ref{Cl4} is devoted to the construction of bases of principal subspaces in the case of $C_l^{(1)}$. 	

\section{Preliminaries}\label{s:pril}
In this paper we are interested in principal subspaces of two different types of affine Lie algebras, so it will be convenient to introduce principal subspace (and latter quasi-particle monomials) for modules of a general untwisted affine Lie algebra. 

\subsection{Modules of affine Lie algebra} \label{ss:modules}
Let $\mathfrak{g}$ be a complex simple Lie algebra of type $X_l$, $\mathfrak{h}$ a Cartan subalgebra of $\mathfrak{g}$ and $R$ the corresponding root system. Let $\Pi=\left\{\alpha_{1},...,\alpha_{l}\right\}$ be a set of simple roots and let $\theta$ denote the maximal root. Denote with $R_{+}$ ($R_{-}$) the set of positive (negative) roots. Then we have the triangular decomposition 
$\mathfrak{g}=\mathfrak{n}_{-}\oplus\mathfrak{h}\oplus\mathfrak{n}_{+}.$ We use $\left\langle \cdot , \cdot \right\rangle$ to denote the standard symmetric invariant nondegenerate bilinear form on $\mathfrak{g}$ which enables us to identify $\mathfrak{h}$ with its dual $\mathfrak{h}^*$. We normalize this form so that $\left\langle \alpha,\alpha \right\rangle=2$ for every long root 
$\alpha \in R$. For $\alpha \in R$ let $\alpha^{\vee}=\frac{2\alpha}{\left\langle \alpha, \alpha\right\rangle}$ denote the corresponding
coroot. Denote by $Q=\sum^{l}_{i=1}\mathbb{Z}\alpha_{i}$ and $P=\sum^{l}_{i=1}\mathbb{Z}\omega_{i}$ the root and weight lattices, 
 where $\omega_1, \ldots ,\omega_l$ are the fundamental weights of $\mathfrak{g}$, that is $\left\langle \omega_{i} , \alpha_{j}\sp\vee \right\rangle=\delta_{i,j}$, $i,j=1,\ldots , l$. For later use, we set $\omega_{0}=0$. 
\par The associated affine Lie algebra of type $X_l^{(1)}$ is the Lie algebra
\begin{equation*}
\widehat{\mathfrak{g}}= \mathfrak{g}\otimes \mathbb{C}[t,t^{-1}]\oplus \mathbb{C}c,
\end{equation*}
where $c$ is a non-zero central element (cf. \cite{K}). For every $x \in\mathfrak{g}$ and $j \in \mathbb{Z}$, we write $x(j)$ for elements $x\otimes t^{j}$. Commutation relations are then given by
\begin{equation*}
\left[c, x(j)\right]=0,
\end{equation*}
\begin{equation*}
\left[x(j_1),y(j_2)\right]= \left[x, y\right](j_1+j_2) + \left\langle x, y \right\rangle j_1 \delta_{j_1+j_2,0}c,
\end{equation*}
for any $x, y \in \mathfrak{g}, \ j,j_1,j_2 \in \mathbb{Z}$. We introduce the following subalgebras of $\widehat{\mathfrak{g}}$
\begin{align*} 
\widehat{\mathfrak{g}}_{\geq 0}=\bigoplus_{n\geq0} \mathfrak{g}\otimes t^{n}\oplus \mathbb{C}c ,\ \ 
\widehat{\mathfrak{g}}_{< 0}=\bigoplus_{n< 0} \mathfrak{g}\otimes t^{n},
\end{align*}
\begin{align*}
 \mathcal{L}(\mathfrak{n}_{+})=\mathfrak{n}_{+} \otimes \mathbb{C}[t,t^{-1}],
\end{align*}
\begin{align*}    
\mathcal{L}(\mathfrak{n}_{+})_{\geq 0}=\mathcal{L}(\mathfrak{n}_{+}) \otimes \mathbb{C}[t] \ \ \text{and} \ \ \mathcal{L}(\mathfrak{n}_{+})_{< 0}=\mathcal{L}(\mathfrak{n}_{+}) \otimes t^{-1}\mathbb{C}[t].
\end{align*}
\par By adjoining the degree operator $d$ to the Lie algebra $\widehat{\mathfrak{g}}$, such that
\begin{equation}\label{eq:S11}
\left[d, x (j)\right]=jx(j),\ \    [d, c] = 0,
\end{equation}
one obtains the affine Kac-Moody algebra \begin{equation*}\widetilde{\mathfrak{g}}= \widehat{\mathfrak{g}} \oplus \mathbb{C}d,\end{equation*} (cf. \cite{K}).
\par Set $\widetilde{\mathfrak{h}}= \mathfrak{h} \oplus  \mathbb{C}c \oplus \mathbb{C}d$. The form $\left\langle  \cdot, \cdot 
\right\rangle$ on $\mathfrak{h}$ extends naturally to $\widetilde{\mathfrak{h}}$. We shall identify $\widetilde{\mathfrak{h}}$ with its dual space $\widetilde{\mathfrak{h}}^*$ via this form. We define $\delta \in \widetilde{\mathfrak{h}}^{\ast}$ by $\delta(d)=1$, $\delta(c)=0$ and $\delta(h)=0$, for every $h \in \mathfrak{h}$. Set $\alpha_0=\delta-\theta$ and $\alpha_0\sp\vee=c-\theta\sp\vee$. Then  $\left\{\alpha_{0}\sp\vee,\alpha_{1}\sp\vee,\ldots , \alpha_{l}\sp\vee\right\}$ is a set of simple coroots of $\widetilde{\mathfrak{g}}$. 
\par Define fundamental weights of $\widetilde{\mathfrak{g}}$ by $\left\langle \Lambda_{i} , \alpha_{j}\sp\vee \right\rangle=\delta_{i,j}$ for $i,j=0,1,\ldots , l$ and $\Lambda_i\left(d\right)=0$. Denote by $L(\Lambda_0)$, $L(\Lambda_1)$,$\ldots$, $L(\Lambda_l)$ standard $\widetilde{\mathfrak{g}}$-modules, that is integrable highest weight $\widetilde{\mathfrak{g}}$-modules with highest weights $\Lambda_0$, $\Lambda_1, \ldots$, $\Lambda_l$ and with highest weight vectors $v_{\Lambda_0}, v_{\Lambda_1}, \ldots , v_{\Lambda_l}$.  
\par The object of our study is $\widehat{\mathfrak{g}}$-module $N_{X_l^{(1)}}(k\Lambda_{0})$ and its irreducible quotient $L_{X_l^{(1)}}(k\Lambda_{0})$, where level $k$ is a positive integer. The generalized Verma module $N_{X_l^{(1)}}(k\Lambda_{0})$ is defined as the induced $\widehat{\mathfrak{g}}$-module
\begin{equation*} 
N_{X_l^{(1)}}(k\Lambda_{0})= 
U(\widehat{\mathfrak{g}})\otimes_{U(\widehat{\mathfrak{g}}_{\geq 0})} \mathbb{C}v_{k\Lambda_{0}},
\end{equation*}
where $\mathbb{C}v_{k\Lambda_{0}}$ is 1-dimensional $\widehat{\mathfrak{g}}_{\geq 0}$-module, such that \begin{equation*}cv_{ k\Lambda_{0}}=kv_{k\Lambda_{0}}\end{equation*} and \begin{equation*}(\mathfrak{g}\otimes t^{j})v_{ k\Lambda_{0} }=0,\end{equation*} for every $j\geq 0$. From the Poincar\'{e}-Birkhoff-Witt theorem, we have  
\begin{equation*} 
N_{X_l^{(1)}}(k\Lambda_{0})\cong  U(\widehat{\mathfrak{g}}_{<0})\otimes_{\mathbb{C}} \mathbb{C}v_{ k\Lambda_{0} }
\end{equation*} 
as vector spaces. Set \begin{equation*}v_{N_{X_l^{(1)}}(k\Lambda_{0})}=1 \otimes v_{ k\Lambda_{0} }.\end{equation*}
We view $\widehat{\mathfrak{g}}$-modules $N_{X_l^{(1)}}(k\Lambda_{0})$ and $L_{X_l^{(1)}}(k\Lambda_{0})$ as $\widetilde{\mathfrak{g}}$-modules, where $d$ acts as \begin{equation}\label{eq:S13}
dv_{N_{X_l^{(1)}}(k\Lambda_{0})}=0
\end{equation} (see \cite{LL}).
\par 
Throughout this paper, we will write $x(m)$ for the action of $x\otimes t^m$ on any $\widehat{\mathfrak{g}}$-module, where $x \in \mathfrak{g}$ and $j \in \mathbb{Z}$.

\subsection{Definition of quasi-particles}
For every positive integer $k$, the generalized Verma module $N_{X_l^{(1)}}(k\Lambda_{0})$ has a structure of vertex operator algebra (see \cite{LL}, \cite{Li1}, \cite{MP}), where $v_{N_{X_l^{(1)}}(k\Lambda_{0})}$ is the vacuum vector. For $x \in \mathfrak{g}$
\begin{equation*}Y(x(-1)v_{N_{X_l^{(1)}}(k\Lambda_{0})}, z)=x(z)=\sum_{m \in \mathbb{Z}} x(m)z^{-m-1}
\end{equation*}
is vertex operator associated with the vector $x(-1)v_{N_{X_l^{(1)}}(k\Lambda_{0})} \in N_{X_l^{(1)}}(k\Lambda_{0})$. In addition, on the irreducible $\widehat{\mathfrak{g}}$ module $L_{X_l^{(1)}}(k\Lambda_0)$ we have the structure of a simple vertex operator algebra and all the level $k$ standard modules are modules for this vertex operator algebra (cf. \cite{LL}, \cite{MP}).
\begin{ax} Later, we will realize standard modules of level $k > 1$ as submo\-dules of tensor products of standard modules of level $1$. Vertex operators $x(z)$, where $x \in \mathfrak{g}$, act on the tensor product of standard modules of level $1$ as Lie algebra elements (cf. \cite{LL}).
\end{ax}
\par For $\alpha_i \in \Pi$ and $r>0$, we have 
\begin{equation*} 
 x_{r\alpha_{i}}(z):= x_{\alpha_i}(z)^r=Y(\left(x_{\alpha_i}(-1)\right)^rv_{N_{X_l^{(1)}}(k\Lambda_{0})},z).
 \end{equation*}
\par For given $i \in \left\{1,\ldots, l\right\}$, $r \in \mathbb{N}$ and $m\in \mathbb{Z}$ define a quasi-particle 
of color $i$, charge $r$ and energy $-m$ by
\begin{equation}\label{eq:S21}
x_{r\alpha_{i}}(m)=\textup{Res}_z \left\{ z^{m+r-1}\underbrace{x_{\alpha_{i}}(z) \cdots x_{\alpha_{i}}(z)}_{\textup{r factors}}\right\}.
\end{equation}
 We shall say that vertex operator $x_{r\alpha_{i}}(z)$ represents the generating function for quasi-particles of color $i$ and charge $r$.
\par From (\ref{eq:S21}) 
it follows
\begin{equation*} 
x_{r\alpha_{i}}(m)=\sum_{\substack{m_{1},\ldots, m_{r}\in \mathbb{Z} \\ m_{1}+\cdots +m_{r}=m}}x_{\alpha_{i}}(m_{r})\cdots x_{\alpha_{i}}(m_{1}),
\end{equation*}
where the family  of operators \begin{equation*}\left(x_{\alpha_{i}}(m_{r})\cdots  x_{\alpha_{i}}(m_{1})\right)_{\substack{m_{1},\ldots ,m_{r}\in
\mathbb{Z} \\ m_{1}+ \cdots + m_{r}=m}}\end{equation*} on the highest weight module is a summable family (cf. \cite{LL}).  
\par We shall usually denote a product of quasi-particles of color $i$ by $b(\alpha_i)$. We say that monomial $b(\alpha_i)$ is a monochromatic monomial colored with color-type $r_i$, if the sum of all quasi-particle charges in monomial $b(\alpha_i)$ is $r_i$. We say that a monochromatic quasi-particle monomial 
\begin{equation*} b(\alpha_i)=x_{n_{r_{i}^{(1)},i}\alpha_{i}}(m_{r_{i}^{(1)},i})\cdots  x_{n_{1,i}\alpha_{i}}(m_{1,i}),\end{equation*}
is of color-type $r_i$, charge-type
\begin{equation}\label{part1}\left(n_{r_{i}^{(1)},i}, \ldots ,  n_{1,i}\right)\end{equation}
where
\begin{equation*} 0 \leq n_{r_{i}^{(1)},i} \leq \cdots \leq  n_{1,i},\end{equation*}
and dual-charge-type
\begin{equation}\label{part2} \left(r^{(1)}_{i}, r^{(2)}_{i}, \ldots , r^{(s)}_{i}\right),\end{equation}
where \begin{equation*} r^{(1)}_{i} \geq  r^{(2)}_{i} \geq \cdots \geq  r^{(s)}_{i} \geq  0 \ \ \text{and} \ \ \ s \geq 1,\end{equation*} 
if (\ref{part1}) and (\ref{part2}) are conjugate partitions of $r_i$ (cf. \cite{Bu}, \cite{G}).
\par Since quasi-particles of the same color commute, we arrange quasi-particles of the same color and the same charge so 
 that the values $m_{p,i}$, for $1 \leq p \leq  r_i^{(1)}$, form a decreasing sequence of integers from right to left. 
 
\subsubsection{\textbf{Relations among quasi-particles of the same color}}
Relations among particles of the same color, that is, expressions for the  products of the form $x_{n\alpha}(m)x_{n'\alpha}(m')$, where $\alpha=\alpha_i$, $n,n' \in \mathbb{N}$ and $m,m' \in \mathbb{Z}$, can be divided into two sets. The first set of relations is described by the following proposition. 
\begin{prop} [cf. \cite{LL}, \cite{Li1}, \cite{MP}]
Let $k \in \mathbb{N}$. Then we have the following relations on the standard module $L_{X_l^{(1)}}(k\Lambda_{0})$:\begin{align}\label{al:S15}
x_{\alpha}(z)^{k+1}&=0,\\
\label{al:S16}
x_{\beta}(z)^{2k+1}&=0,
\end{align}
where $\alpha \in R$ is a long root and $\beta \in R$ is a short root.
\end{prop}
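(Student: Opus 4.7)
The plan is to reduce both claims to the integrability of a suitable $\widehat{\mathfrak{sl}}_2$-subalgebra of $\widehat{\mathfrak{g}}$ and then invoke the standard nilpotency of the vertex operator associated with a root vector on an integrable module, with the whole calculation coming down to the level of the restricted action.

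First I would fix a root $\gamma\in R$ and form the subalgebra spanned by $\{x_{\pm\gamma}(n),\gamma^\vee(n),c\,:\,n\in\mathbb{Z}\}$. Invariance of $\langle\cdot,\cdot\rangle$ together with $[x_\gamma,x_{-\gamma}]=\gamma^\vee$ forces $\langle x_\gamma,x_{-\gamma}\rangle=2/\langle\gamma,\gamma\rangle$, so the commutation rule of Section~\ref{ss:modules} reads
\begin{equation*}
[x_\gamma(m),x_{-\gamma}(n)]=\gamma^\vee(m+n)+\frac{2}{\langle\gamma,\gamma\rangle}\,m\,\delta_{m+n,0}\,c.
\end{equation*}
Identifying this subalgebra with a copy of $\widehat{\mathfrak{sl}}_2$, its own central element is $(2/\langle\gamma,\gamma\rangle)c$, so the restriction of $L_{X_l^{(1)}}(k\Lambda_0)$ has level $(2/\langle\gamma,\gamma\rangle)k$. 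Under the normalization fixed in the preliminaries, $\langle\alpha,\alpha\rangle=2$ for long roots and $\langle\beta,\beta\rangle=1$ for the short roots occurring in types $B_l$ and $C_l$, so this level is $k$ in the long case and $2k$ in the short case.

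Next I would use that $L_{X_l^{(1)}}(k\Lambda_0)$ is integrable for $\widetilde{\mathfrak{g}}$, hence its restriction to each such $\widehat{\mathfrak{sl}}_2$-subalgebra is an integrable module at the level just computed. Every integrable level-$N$ module for $\widehat{\mathfrak{sl}}_2$ is a module over the simple vertex operator algebra $L_{A_1^{(1)}}(N\Lambda_0)$, so every relation that holds on the latter automatically transports to the restriction of $L_{X_l^{(1)}}(k\Lambda_0)$.

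Finally I would verify the nilpotency inside $L_{A_1^{(1)}}(N\Lambda_0)$ itself. The vector $x_\gamma(-1)^{N+1}v_{N\Lambda_0}$ equals $f_0^{N+1}v_{N\Lambda_0}$ for the affine Chevalley generator $f_0=x_\gamma(-1)$, and it vanishes by the integrability relation since $\langle N\Lambda_0,\alpha_0^\vee\rangle=N$. Because $[x_\gamma(m),x_\gamma(n)]=0$ for all $m,n$ (an immediate consequence of $[x_\gamma,x_\gamma]=0$ and $\langle x_\gamma,x_\gamma\rangle=0$), the state-field correspondence yields
\begin{equation*}
Y\!\left(x_\gamma(-1)^{N+1}v_{N\Lambda_0},z\right)=x_\gamma(z)^{N+1},
\end{equation*}
so $x_\gamma(z)^{N+1}=0$ on $L_{A_1^{(1)}}(N\Lambda_0)$ and therefore on $L_{X_l^{(1)}}(k\Lambda_0)$. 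Substituting $N=k$ gives \eqref{al:S15} and $N=2k$ gives \eqref{al:S16}. The step one must handle carefully is precisely the identification of $Y(x_\gamma(-1)^{N+1}v_{N\Lambda_0},z)$ with the naive power of $x_\gamma(z)$; this relies decisively on the commutativity of the modes of $x_\gamma$, without which a normal-ordering prescription or a combinatorial factor could easily creep in and obscure the argument.
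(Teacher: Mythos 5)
The paper offers no proof of this proposition --- it is stated with the citations \cite{LL}, \cite{Li1}, \cite{MP} --- and your argument is precisely the standard one found in those references: restrict to the root-$\widehat{\mathfrak{sl}}_2$ attached to $\gamma$, compute that its canonical central element is $(2/\langle\gamma,\gamma\rangle)c$ so the restriction has level $N=2k/\langle\gamma,\gamma\rangle$ (namely $k$ for long and $2k$ for short roots), kill $f_0^{N+1}v_{N\Lambda_0}$ by integrability, and identify $Y(x_\gamma(-1)^{N+1}v,z)$ with the naive power $x_\gamma(z)^{N+1}$ using that the modes of $x_\gamma$ commute, so the normal ordering is vacuous. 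The one step I would phrase more carefully is the transport: rather than invoking that the restriction of $L_{X_l^{(1)}}(k\Lambda_0)$ is a module for the simple vertex operator algebra $L_{A_1^{(1)}}(N\Lambda_0)$ --- a fact whose usual proof relies on the very relation being established --- observe that the $\widehat{\mathfrak{sl}}_2$-submodule generated by the vacuum is an integrable highest weight module, hence irreducible, so $x_\gamma(-1)^{N+1}\mathbf{1}=0$ already inside the simple vertex operator algebra $L_{X_l^{(1)}}(k\Lambda_0)$, and then conclude via its own state--field correspondence that $x_\gamma(z)^{N+1}=Y(0,z)=0$ on every $L_{X_l^{(1)}}(k\Lambda_0)$-module.
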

\par The second set of relations was proved in \cite{F}, \cite{FS}, \cite{G} and \cite{JP}:
\begin{lem}\label{lem:S21}
For fixed $M,j\in \mathbb{Z}$ and $1\leq n\leq n'$ any of $2n$ monomials from the set
\begin{multline*} A=\{x_{n\alpha}(j)x_{n'\alpha}(M-j),x_{n\alpha}(j+1)x_{n'\alpha}(M-j-1),
\ldots ,\\
 \ldots , x_{n\alpha}(j+2n-1)x_{n'\alpha}(M-j-2n+1)\}\end{multline*}
can be expressed as a linear combination of monomials from the set \begin{equation*} \left\{x_{n\alpha}(m)x_{n'\alpha}(m'):m+m'=M\right\} \setminus A\end{equation*} and monomials which have as a factor quasi-particle $x_{(n'+1)\alpha}(j')$, $j' \in \mathbb{Z}$.
\end{lem}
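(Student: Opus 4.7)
The plan is to derive the claimed $2n$ relations from a two-variable generating-function identity. The structural input is that, for any root vector $x_\alpha$, one has $[x_\alpha,x_\alpha]=0$ and $\langle x_\alpha,x_\alpha\rangle=0$ (because the root space $\mathfrak{g}_\alpha$ is isotropic with respect to the invariant form), so $[x_\alpha(m_1),x_\alpha(m_2)]=0$ for all $m_1,m_2\in\mathbb{Z}$. Thus $x_\alpha(z_1)$ and $x_\alpha(z_2)$ commute as formal series, and the two-variable generating operator
\[
X(z_1,z_2)\;=\;x_{n\alpha}(z_1)\,x_{n'\alpha}(z_2)\;=\;x_\alpha(z_1)^n\,x_\alpha(z_2)^{n'}
\]
is a commutative expression in the operator-valued series $x_\alpha(z_1),x_\alpha(z_2)$. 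The collapse at the diagonal
\[
X(z_1,z_1)\;=\;x_\alpha(z_1)^{n+n'}\;=\;x_{(n'+1)\alpha}(z_1)\,x_{(n-1)\alpha}(z_1)
\]
produces the $x_{(n'+1)\alpha}(j')$-factor that sits on the right-hand side of the lemma, and the target monomials are recovered by the residue formula $x_{n\alpha}(j)\,x_{n'\alpha}(M-j)=\operatorname{Res}_{z_1}\operatorname{Res}_{z_2}\,z_1^{j+n-1}\,z_2^{M-j+n'-1}\,X(z_1,z_2)$.

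I would Taylor-expand $x_\alpha(z_2)^{n'}$ about $z_2=z_1$ through order $2n$ --- a standard formal-calculus identity in $\mathbb{C}((z_1))[[z_2-z_1]]$ --- multiply by $x_\alpha(z_1)^n$, and apply Leibniz's rule in the form
\[
x_\alpha(z_1)^n\,\partial_{z_1}^k\!\bigl(x_\alpha(z_1)^{n'}\bigr)\;=\;\partial_{z_1}^k\!\bigl(x_{(n+n')\alpha}(z_1)\bigr)-\sum_{j=1}^{k}\binom{k}{j}\,\partial_{z_1}^j\!\bigl(x_{n\alpha}(z_1)\bigr)\,\partial_{z_1}^{k-j}\!\bigl(x_{n'\alpha}(z_1)\bigr).
\]
This separates each Taylor coefficient into (i) a derivative of $x_{(n+n')\alpha}(z_1)$, whose modes land in the $x_{(n'+1)\alpha}$-ideal via the factorization $x_{(n+n')\alpha}(M')=\sum_{m+m'=M'}x_{(n'+1)\alpha}(m)\,x_{(n-1)\alpha}(m')$, and (ii) diagonal corrections of the form $x_{n\alpha}(m)\,x_{n'\alpha}(m')$ with $m+m'$ fixed, supported on coefficients outside the targeted window.

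Extracting the coefficient of $z_1^{-(j+s)-n}\,z_2^{-(M-j-s)-n'}$ on both sides for each $s\in\{0,1,\ldots,2n-1\}$ yields $2n$ linear relations --- one for each element of $A$ --- expressing the monomial $x_{n\alpha}(j+s)\,x_{n'\alpha}(M-j-s)$ as a linear combination of monomials $x_{n\alpha}(m)\,x_{n'\alpha}(m')$ with $m\notin\{j,\ldots,j+2n-1\}$, plus a monomial containing an $x_{(n'+1)\alpha}(j')$-factor. The truncation order $2n$ is dictated by the count: the polynomials $(z_2-z_1)^k$ for $0\leq k<2n$ span a $2n$-dimensional space of coefficient-level relations among $2n+1$ consecutive windowed monomials, which is exactly the linear-algebra content required to solve for each element of the $2n$-element window $A$ in terms of monomials outside $A$.

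The main obstacle is the formal-calculus bookkeeping: one must verify that the expansion convention in $\mathbb{C}((z_1))[[z_2-z_1]]$ correctly recovers the coefficients $x_{n\alpha}(m)\,x_{n'\alpha}(m')$ when we equate two-variable series, that the $2n$ relations so produced are linearly independent (which follows from the Vandermonde-type non-degeneracy of the matrix whose rows come from $(z_2-z_1)^k$, $0\leq k<2n$), and that the Leibniz correction terms do not reintroduce monomials from the window $A$ with nonzero coefficient. Once these checks are made, the coefficient extraction is routine. The same technique has been worked out in detail in \cite{F}, \cite{FS}, \cite{G}, \cite{JP}, and I would follow that template.
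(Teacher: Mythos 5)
The paper itself gives no proof of this lemma (it only cites \cite{F}, \cite{FS}, \cite{G}, \cite{JP}), so your proposal has to be measured against the standard argument in those references. Your scaffolding is the right one: commutativity of the modes $x_{\alpha}(m)$, Taylor expansion of $x_{n\alpha}(z_1)x_{n'\alpha}(z_2)$ about the diagonal to order $2n$, the identities $x_{\alpha}(w)^{n'}x_{\alpha}(w)=x_{(n'+1)\alpha}(w)$ and $x_{\alpha}(w)^{n'}x_{\alpha}'(w)=\tfrac{1}{n'+1}\partial_w x_{(n'+1)\alpha}(w)$ as the source of the $x_{(n'+1)\alpha}(j')$ factors, and a $2n$-dimensional linear-algebra step. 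But there is a genuine gap at the decisive point. Your claim that the Leibniz correction terms $\partial^{j}(x_{n\alpha}(z_1))\,\partial^{k-j}(x_{n'\alpha}(z_1))$, $j\geq 1$, contribute \enquote{diagonal corrections supported on coefficients outside the targeted window} is false: these are products of derivatives of the two generating series at the \emph{same} point, so extracting a coefficient yields the full diagonal sum $\sum_{m+m'=M}c(m)\,x_{n\alpha}(m)x_{n'\alpha}(m')$ with polynomial weights $c(m)$, which involves every monomial on the anti-diagonal, including all of $A$. The check you defer to the end (\enquote{that the Leibniz correction terms do not reintroduce monomials from the window}) therefore cannot be carried out, and with it the announced form of your $2n$ relations --- each one already expressing a window element through non-window elements --- collapses.

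The correct mechanism is different in two respects. First, one expands the factor of \emph{smaller} charge, $x_{n\alpha}(z)=\sum_{a\geq 0}\frac{(z-w)^a}{a!}\,\partial^a x_{n\alpha}(w)$: for $a\leq 2n-1$ every monomial of $\partial^a\bigl(x_{\alpha}(w)^n\bigr)$ must have some factor carrying at most one derivative ($2n-1$ derivatives cannot all be $\geq 2$ on $n$ factors), so after multiplying by $x_{\alpha}(w)^{n'}$ each such term lies \emph{entirely} in the span of monomials containing a mode of $x_{(n'+1)\alpha}$ --- there are no leftover corrections. (Your choice to expand the larger-charge factor $x_{\alpha}(z_2)^{n'}$ produces monomials such as $x_{\alpha}^{n'-1}(x_{\alpha}')^2(\cdots)$ whose membership in the differential ideal of $x_{\alpha}^{n'+1}$ needs a further inductive argument you do not supply.) Second, the $2n$ relations so obtained read $\sum_{m+m'=M}P_a(m)\,x_{n\alpha}(m)x_{n'\alpha}(m')\equiv 0$ with $\deg P_a=a$, $0\leq a\leq 2n-1$; they involve \emph{all} monomials of the anti-diagonal, and one isolates the window by Lagrange interpolation at the $2n$ points $j,\ldots,j+2n-1$ (this is where the Vandermonde nondegeneracy actually enters --- to invert a $2n\times 2n$ system, not to certify independence of relations that allegedly already avoid $A$). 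With these two corrections your outline becomes the standard proof; as written, it does not close.
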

\begin{cor}\label{cor:S21}
Fix $n\in \mathbb{N}$ and $j \in \mathbb{Z}$. The elements from the set \begin{equation*}A_1=\{x_{n\alpha}(m)x_{n\alpha}(m'):m'-2n< m \leq   m'\}\end{equation*}
can be expressed as linear combinations of monomials $x_{n\alpha}(m)x_{n\alpha}(m')$, such that $$m\leq m'-2n$$ 
and monomials with quasi-particle $x_{(n+1)\alpha_i}(j')$, $j' \in \mathbb{Z}$.
\end{cor}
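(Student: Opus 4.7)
The plan is to derive the corollary directly from Lemma \ref{lem:S21} applied with $n'=n$, combined with the commutativity of quasi-particles of the same color and the same charge. The observation immediately preceding the corollary --- that $x_{n\alpha}(m)x_{n\alpha}(m')=x_{n\alpha}(m')x_{n\alpha}(m)$ for same-color same-charge quasi-particles --- lets us always pass to the convention $m\leq m'$ by swapping when necessary.

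Fix $M\in\mathbb{Z}$ and consider the slice $\{x_{n\alpha}(m)x_{n\alpha}(m')\colon m+m'=M\}$. The elements of $A_1$ inside this slice are the ordered pairs $(m,m')$ with $m\leq m'$ and $m'-m<2n$. I would invoke Lemma \ref{lem:S21} with $n'=n$ and the shift $j$ chosen so that $\{j,j+1,\ldots,j+2n-1\}$ is placed symmetrically about $M/2$, for instance $j=\lceil M/2\rceil-n+1$. The resulting set $A$ consists of $2n$ ordered pairs $(p,M-p)$ with $p\in\{j,\ldots,j+2n-1\}$, and after using commutativity to force $m\leq m'$ in each pair, $A$ covers every element of $A_1$ in this slice (with possibly one additional pair whose $m'-m$ equals $2n$ in the even-$M$ case, which causes no harm).

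Lemma \ref{lem:S21} then expresses every element of $A$ --- and therefore every element of $A_1$ with this value of $M$ --- as a linear combination of monomials $x_{n\alpha}(p)x_{n\alpha}(M-p)$ lying outside $A$, together with monomials containing a factor $x_{(n+1)\alpha}(j')$. The ordered pairs $(p,M-p)$ outside our symmetrically chosen $A$ satisfy $|p-(M-p)|\geq 2n$; using commutativity one last time to put each such pair into the form $(m,m')$ with $m\leq m'$ turns this condition into $m'-m\geq 2n$, equivalently $m\leq m'-2n$, which is precisely what is required.

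The only delicate point is the bookkeeping involved in choosing $j$ according to the parity of $M$ and checking that every element of $A_1$ is indeed caught by the resulting set $A$. This split into even and odd cases accounts for a small asymmetry but carries no essential content. Once this verification is done, the corollary follows from Lemma \ref{lem:S21} with no further ingredient, and I expect no substantial obstacle beyond this bookkeeping.
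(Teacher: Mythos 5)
Your argument is correct and is exactly the intended derivation: the paper gives no separate proof of this corollary, treating it as the specialization $n'=n$ of Lemma \ref{lem:S21} combined with the commutativity of equally charged quasi-particles of the same colour, which is precisely your route. The only quibble is with your sample choice $j=\lceil M/2\rceil-n+1$ in the odd-$M$ case, where it leaves one ordered representative of the difference-$(2n-1)$ pair in the complement of $A$; taking $j=\lceil M/2\rceil-n$ there makes $A$ consist exactly of the ordered representatives of the pairs with $m'-2n<m\leq m'$ in the slice $m+m'=M$, so that the complement contains only monomials with $m\leq m'-2n$ and even this cosmetic issue disappears.
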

\par In order to find relations among quasi-particles, which are differently colored, we will use the commutator formula among vertex operators: 
\begin{align}\label{al:S14} 
[Y(x_{\alpha}(-1)v_{N(k\Lambda_{0})},z_1), Y(x_{\beta}(-1)^rv_{N(k\Lambda_{0})},z_2)]\\
\nonumber
= \sum_{j \geq 0} \frac{(-1)^j}{j!} \left(\frac{d}{dz_1} 
 \right)^j z^{-1}_2
\delta\left(\frac{z_1}{z_2}\right)Y(x_{\alpha}(j)x_{\beta}(-1)^rv_{N(k\Lambda_{0})},z_2),
\end{align}
where $\alpha, \beta \in R$, (cf. \cite{FHL}).

\section{Principal subspaces and Quasi-particle monomials}\label{s:psqpm}
\subsection{Principal subspace}\label{ss:ps}
Let $k \in \mathbb{N}$ and let $\Lambda = k \Lambda_0$. Set $v_{L_{X_l^{(1)}}(k\Lambda_0)}$ to be the highest weight vector of the standard module $L_{X_l^{(1)}}(k\Lambda_0)$. As in \cite{FS} and \cite{G}, we define the principal subspace $W_{L_{X_l^{(1)}}(k\Lambda_0)}$ of the standard module $L_{X_l^{(1)}}(k\Lambda_0)$ as
\begin{equation*}
    W_{L_{X_l^{(1)}}(k\Lambda_0)}= U(\mathcal{L}(\mathfrak{n}_+))v_{L_{X_l^{(1)}}(k\Lambda_0)},
\end{equation*}
and the principal subspace $W_{N_{X_l^{(1)}}(k\Lambda_{0})}$ of the generalized Verma module $N_{X_l^{(1)}}(k\Lambda_{0})$ as
\begin{equation*}
W_{N_{X_l^{(1)}}(k\Lambda_{0})}= U(\mathcal{L}(\mathfrak{n}_{+}))v_{N_{X_l^{(1)}}(k\Lambda_{0})},
\end{equation*}
where $U(\mathcal{L}(\mathfrak{n}_+))$ is the universal enveloping algebra of Lie algebra $\mathcal{L}(\mathfrak{n}_+)$. 

\subsection{Quasi-particle monomials}\label{ss:qpm}
In Subsections \ref{Bl32} and \ref{Cl42}, we construct bases of $W_{L_{X_l^{(1)}}(k\Lambda_0)}$  and $W_{N_{X_l^{(1)}}(k\Lambda_0)}$ consisting of vectors of the form  $bv_{L_{X_l^{(1)}}(k\Lambda_0)}$ and $bv_{N_{X_l^{(1)}}(k\Lambda_0)}$, where monomials $b$ are composed of monochromatic monomials $b(\alpha_i)$, where $i = 1,\ldots,l$. Here we extend definitions of monochromatic monomials to polychromatic monomials. We use the same terminology for the products of generating functions.  
\par For (polychromatic) monomial 
\begin{equation*}b= b(\alpha_{l})\cdots b(\alpha_{1}),\end{equation*}
\begin{equation*}=x_{n_{r_{l}^{(1)},l}\alpha_{l}}(m_{r_{l}^{(1)},l}) \cdots  x_{n_{1,l}\alpha_{l}}(m_{1,l})\cdots 
x_{n_{r_{1}^{(1)},1}\alpha_{1}}(m_{r_{1}^{(1)},1}) \cdots  x_{n_{1,1}\alpha_{1}}(m_{1,1}),\end{equation*} 
we will say it is of charge-type 
\begin{equation*}
\mathfrak{R}'=\left(n_{r_{l}^{(1)},l}, \ldots ,n_{1,l};\ldots ;n_{r_{l}^{(1)},l}, \ldots ,n_{1,1}\right),\end{equation*} where
\begin{equation*} 
0 \leq n_{r_{i}^{(1)},i}\leq \ldots \leq  n_{1,i},
\end{equation*}
dual-charge-type
\begin{equation*}
\mathfrak{R}= \left(r^{(1)}_{l},\ldots , r^{(s_{l})}_{l};\ldots ;r^{(1)}_{1},\ldots , r^{(s_{1})}_{1} \right),\end{equation*}
where
\begin{equation*} 
r^{(1)}_{i}\geq r^{(2)}_{i}\geq \ldots \geq  r^{(s_{i})}_{i}\geq 0 
\end{equation*}
and color-type
\begin{equation*} \left(r_{l},\ldots, r_{1}\right),\end{equation*}
where 
\begin{equation*} 
r_i=\sum_{p=1}^{r_{i}^{(1)}}n_{p,i}=\sum^{s_{i}}_{t=1}r^{(t)}_{i} \ \ \text{and} \ \ s_{i}\in \mathbb{N},
\end{equation*}
if for every color $i$, $1 \leq i \leq l$,
\begin{equation*}\left(n_{r_{i}^{(1)},i}, \ldots ,n_{1,i}\right)$$ and $$\left(r^{(1)}_{i}, r^{(2)}_{i}, \ldots , 
r^{(s)}_{i}\right)\end{equation*}
are mutually conjugate partitions of $r_i$ (cf. \cite{Bu}, \cite{G}). 
\begin{ax} In the case of affine Lie algebra of type $C_l^{(1)}$ the bases of $W_{L_{C_l^{(1)}}(k{\Lambda}_{0})}$ and $W_{N_{C_l^{(1)}}(k{\Lambda}_{0})}$ will generate polychromatic monomials
\begin{equation*} b(\alpha_{1})\cdots b(\alpha_{l})
\end{equation*}
whose charge-types, dual-charge types and color-types are defined similarly.
\end{ax}
\par We compare the (polychromatic) monomials as in \cite{Bu} and \cite{G}. We state 
 \begin{equation}\label{eq:S23}
b< \overline{b}
\end{equation}
if one of the following conditions holds:
\begin{enumerate}
    \item
$\left(n_{r_{l}^{(1)},l}, \ldots  ,n_{1,1}\right)<
\left(\overline{n}_{\overline{r}_{l}^{(1)},l}, \ldots ,\overline{n}_{1,1}\right)$,\\ i.e., if there is $u \in \mathbb{N}$, such that   $n_{1,i}=\overline{n}_{1,i}, n_{2,i}=\overline{n}_{2,i},\ldots , n_{u-1,i}=\overline{n}_{u-1,i},$ and 
$u=\overline{r}_{i}^{(1)}+1 \ \ \text{or} \ \ n_{u,i}<\overline{n}_{u,i}$;
\item $\left(n_{r_{l}^{(1)},l}, \ldots , n_{1,1}\right)=
\left(\overline{n}_{\overline{r}_{l}^{(1)},l}, \ldots , \overline{n}_{1,1}\right)$,\\ 
$\left(m_{r_{l}^{(1)},l},\ldots , m_{1,1}\right)
<
\left(\overline{m}_{\overline{r}_{l}^{(1)},l}, \ldots
,\overline{m}_{1,1}\right)$\\
i.e. if the\-re is $u \in \mathbb{N}$, $1\leq u \leq r_i$, such that $m_{1,i}=\overline{m}_{1,i},
m_{2,j}=\overline{m}_{2,j},
\ldots m_{u-1,i}=\overline{m}_{u-1,i}$ and $m_{u,i}<\overline{m}_{u,i}$.
\end{enumerate}
\begin{ax}
Similarly definition is for the $C_l^{(1)}$ case. First we compare the charge-types and if the charge-types are the same, we compare the sequences of energies, starting from color $i=l$. 
\end{ax}

\subsection{Characters of principal subspaces}
We extend the definition of character of the principal subspaces $W_{L_{X_l^{(1)}}(k{\Lambda}_{0})}$ and $W_{N_{X_l^{(1)}}(k{\Lambda}_{0})}$ from \cite{Bu}. 
\par Denote by $\text{ch} \ W_{L_{X_l^{(1)}}(k\Lambda_{0})}$ the  characters of $W_{L_{X_l^{(1)}}(k\Lambda_{0})}$:
 \begin{align}\label{S75}
 \text{ch} \ W_{L_{X_l^{(1)}}(k\Lambda_{0})}=\sum_{m,r_1,\ldots, r_l\geq 0} 
\text{dim} \ {W_{L_{X_l^{(1)}}(k\Lambda_{0})}}_{(m,r_1,\ldots, r_l)}q^{m}y^{r_1}_{1}\cdots y^{r_l}_{l},
\end{align}
where $q,\ y_1,\ldots y_l$ are formal variables and 
\begin{align*}
{W_{L_{X_l^{(1)}}(k\Lambda_{0})}}_{(m,r_1, \ldots, r_l)}={W_{L_{X_l^{(1)}}(k\Lambda_{0})}}_{-m\delta +r_1\alpha_1 +\ldots + r_l\alpha_l}
\end{align*} 
is the $\widetilde{\mathfrak{h}}$-weight subspace of weight $-m\delta +r_1\alpha_1 +\ldots + r_l\alpha_l$.
\par In the same way we define the character of the principal subspace $W_{N_{X_l^{(1)}}(k\Lambda_{0})}$.

\section{The case  \texorpdfstring{$B_\MakeLowercase{l}^{(1)}$}{Bl{(1)}}}
\label{Bl3}
\subsection{Principal subspaces for affine Lie algebra of type \texorpdfstring{$B_\MakeLowercase{l}^{(1)}$}{Bl{(1)}}}
Let $\mathfrak{g}$ be of the type $B_l$, $l \geq 2$. The root system $R$ of $\mathfrak{g}$ will be identified as a subset $\mathbb{R}^l$, where $\left\{\epsilon_1,\ldots, \epsilon_l\right\}$ denotes the usual orthonormal basis of the $\mathbb{R}^l$. We have the base of $R$: 
$$\Pi =\left\{\alpha_1= \epsilon_1-\epsilon_2  , \ldots, \alpha_{l-1}=\epsilon_{l-1}-\epsilon_l ,\alpha_l= \epsilon_l\right\},$$ the set of positive roots: 
$$R_{+}=\left\{ \epsilon_{i}- \epsilon_{j}: i < j \right\} \cup \left\{ \epsilon_{i}+ \epsilon_{j}: i \neq j \right\} \cup \left\{\epsilon_{i}: 1 \leq i \leq l\right\}$$ and the highest root 
$$\theta= \epsilon_1+\epsilon_2=\alpha_1+2\alpha_2 + \cdots + 2\alpha_l.$$
For each root $\alpha \in R_+$ we have a root vector $x_{\alpha} \in \mathfrak{g}$. We define a one-dimensional subalgebras of $\mathfrak{g}$ 
\begin{equation*}
\mathfrak{n}_{\alpha} = \mathbb{C}x_{\alpha}, \  \  \alpha \in R_+,
\end{equation*}
with the corresponding subalgebras of $\widehat{\mathfrak{g}}$
\begin{equation*}
\mathcal{L}(\mathfrak{n}_{\alpha} ) = \mathfrak{n}_{\alpha}  \otimes \mathbb{C}[t,t^{-1}].
\end{equation*}
We denote with $U_{B_{l}^{(1)}}$ the vector space 
\begin{equation*}
U_{B_{l}^{(1)}} = U(\mathcal{L}\left(\mathfrak{n}_{\alpha_{l}}\right))\cdots U(\mathcal{L}\left(\mathfrak{n}_{\alpha_{1}}\right)).
\end{equation*}
Using the same argument as Georgiev in \cite{G} we can prove
\begin{lem}\label{prvalemaB} Let $k \geq 1$. We have 
\begin{align}\nonumber
W_{L_{B_{l}^{(1)}}(k\Lambda_{0})}& = U_{B_{l}^{(1)}}v_{L_{B_{l}^{(1)}}(k\Lambda_{0})},\\
\nonumber
W_{N_{B_{l}^{(1)}}(k\Lambda_{0})}& = U_{B_{l}^{(1)}}v_{N_{B_{l}^{(1)}}(k\Lambda_{0})}.
\end{align}
\begin{flushright}
$\square$
\end{flushright}
\end{lem}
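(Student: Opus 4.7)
The inclusion $U_{B_l^{(1)}} v_{L_{B_l^{(1)}}(k\Lambda_0)} \subseteq W_{L_{B_l^{(1)}}(k\Lambda_0)}$ is immediate, since $\mathcal{L}(\mathfrak{n}_{\alpha_i}) \subseteq \mathcal{L}(\mathfrak{n}_+)$ for every simple root and hence $U_{B_l^{(1)}} \subseteq U(\mathcal{L}(\mathfrak{n}_+))$. The nontrivial direction is the reverse inclusion, and the plan is to adapt the argument of Georgiev from \cite{G} to the $B_l^{(1)}$ setting; the adaptation is purely formal, because the only input needed is the commutator structure of root vectors in $\widehat{\mathfrak{g}}$, and not the simply-laced hypothesis.

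By the Poincar\'{e}-Birkhoff-Witt theorem applied to $\mathcal{L}(\mathfrak{n}_+)$, $W_{L_{B_l^{(1)}}(k\Lambda_0)}$ is spanned by vectors of the form $x_{\beta_1}(m_1)\cdots x_{\beta_p}(m_p) v_{L_{B_l^{(1)}}(k\Lambda_0)}$ with $\beta_j \in R_+$ and $m_j \in \mathbb{Z}$. The first step is to reduce to monomials in which all $\beta_j$ are simple. For any non-simple $\beta \in R_+$ of $B_l$ one can choose a simple $\alpha_i \in \Pi$ such that $\gamma := \beta - \alpha_i$ is again in $R_+$. Since $\alpha_i + \gamma \neq 0$, the central term in the affine bracket relation of Subsection \ref{ss:modules} vanishes and one obtains
\begin{equation*}
x_\beta(m) \;=\; c_\beta^{-1}\bigl[x_{\alpha_i}(0),\, x_\gamma(m)\bigr]
\end{equation*}
for a nonzero scalar $c_\beta \in \mathbb{C}$. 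Substituting this identity into a PBW monomial replaces the non-simple factor $x_\beta(m)$ of height $\mathrm{ht}(\beta)$ by a simple factor together with a factor $x_\gamma(m)$ of strictly smaller height (or none, if $\gamma$ is simple). An induction on the multiset of heights of non-simple factors therefore rewrites every spanning vector as a sum of monomials of the form $x_{\alpha_{i_1}}(n_1)\cdots x_{\alpha_{i_q}}(n_q) v_{L_{B_l^{(1)}}(k\Lambda_0)}$ with all $\alpha_{i_j} \in \Pi$.

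The second step is to reorder each such simple-root monomial into the precise order $\alpha_l, \alpha_{l-1}, \ldots, \alpha_1$ prescribed by the definition of $U_{B_l^{(1)}}$. When $|i-j|\geq 2$ the bracket $[x_{\alpha_i}(m), x_{\alpha_j}(n)]$ vanishes because $\alpha_i + \alpha_j$ is not a root of $B_l$, so such adjacent factors commute freely. When $|i-j|=1$ the bracket is a nonzero scalar multiple of $x_{\alpha_i+\alpha_j}(m+n)$, a non-simple root vector that is handled by the first step. Combining the two reductions via a lexicographic induction on the pair (multiset of heights of non-simple factors, number of inversions among simple-root factors) terminates and yields the desired expression in $U_{B_l^{(1)}} v_{L_{B_l^{(1)}}(k\Lambda_0)}$. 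The same proof applies verbatim to $W_{N_{B_l^{(1)}}(k\Lambda_0)}$, as it uses only bracket relations in $\widehat{\mathfrak{g}}$. The main bookkeeping obstacle is to set up the complexity measure so that it strictly decreases at every reordering step, since each swap involving adjacent simple roots of the Dynkin diagram can reintroduce a non-simple commutator which must then be eliminated by the first step.
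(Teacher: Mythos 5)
Your overall strategy (PBW, reduction of non-simple root vectors to iterated commutators of simple ones, then reordering by colour) is the natural first attempt, but the termination of the reordering step --- which you yourself flag as ``the main bookkeeping obstacle'' --- is a genuine gap rather than bookkeeping, and the measure you propose provably fails to decrease. Concretely, take an adjacent inverted pair $x_{\alpha_i}(m)x_{\alpha_{i+1}}(n)$ inside a word of simple root vectors. The swap produces the reordered word (fine: same multiset of non-simple heights, one fewer inversion) plus a commutator term containing $x_{\alpha_i+\alpha_{i+1}}(m+n)$. For that term the multiset of heights of non-simple factors has strictly \emph{increased} (a height-two element has been added to the empty multiset), so under the lexicographic order this term is larger, not smaller, and the induction hypothesis does not apply to it. Worse, the procedure genuinely cycles: eliminating $x_{\alpha_i+\alpha_{i+1}}(m+n)$ by your first step yields $x_{\alpha_{i+1}}(0)x_{\alpha_i}(m+n)-x_{\alpha_i}(m+n)x_{\alpha_{i+1}}(0)$ up to a scalar, and the second of these words has exactly the same sequence of colours --- hence the same inversion count and the same (empty) multiset of non-simple heights --- as the word you started from; iterating reproduces that colour pattern verbatim, only with the energies redistributed. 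Since both components of your measure depend only on the sequence of roots and not on the energies $m_j$, no measure of this kind can strictly decrease along such a cycle.

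The paper itself gives no proof here beyond deferring to Georgiev, and the argument it is invoking is structural rather than a syntactic rewriting, precisely to avoid this problem. Let $\mathfrak{a}_1\subset\mathfrak{n}_+$ be the span of the root vectors $x_\beta$ with $\beta$ containing $\alpha_1$; since $\theta=\alpha_1+2\alpha_2+\cdots+2\alpha_l$ has $\alpha_1$-coefficient $1$, $\mathfrak{a}_1$ is an \emph{abelian ideal} of $\mathfrak{n}_+$ and $\mathfrak{n}_+=\mathfrak{b}_1\oplus\mathfrak{a}_1$, where $\mathfrak{b}_1$ is the nilradical of the $B_{l-1}$ subsystem on $\alpha_2,\ldots,\alpha_l$. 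The Poincar\'e--Birkhoff--Witt theorem then gives $U(\mathcal{L}(\mathfrak{n}_+))=U(\mathcal{L}(\mathfrak{b}_1))\,U(\mathcal{L}(\mathfrak{a}_1))$ with no reordering argument at all; one then shows $U(\mathcal{L}(\mathfrak{a}_1))v\subseteq U(\mathcal{L}(\mathfrak{b}_1))\,U(\mathcal{L}(\mathfrak{n}_{\alpha_1}))v$ using $x_{\alpha_1+\gamma}(m)=c\,[x_{\gamma}(0),x_{\alpha_1}(m)]$, the commutativity of $\mathfrak{a}_1$, and the vanishing $x_{\gamma}(0)v=0$ (resolving the residual self-reproducing terms by solving for them, in the spirit of Lemma 2.0.1 of the paper, rather than by a decreasing measure), and finally concludes by induction on the rank $l$ applied to $U(\mathcal{L}(\mathfrak{b}_1))$. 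Your first reduction step (every positive root vector is an iterated bracket of simple ones) is correct and is used inside that argument; it is the global colour-reordering of simple factors that must be replaced by the ideal/PBW decomposition.
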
 
By extending the construction of bases of principal subspaces in the case of affine Lie algebra of type $B_2^{(1)}$, we shall construct bases for the principal subspaces $W_{L_{B_{l}^{(1)}}(k\Lambda_{0})}$ and $W_{N_{B_{l}^{(1)}}(k\Lambda_{0})}$, which will be generated by quasi-particles acting on the highest weight vectors. We start with the principal subspaces $W_{L_{B_{l}^{(1)}}(k\Lambda_{0})}$.

\subsection{The spanning set of \texorpdfstring{$W_{L_{B_{l}^{(1)}}(k\Lambda_{0})}$}{W{L{Bl{(1)}}(kLambda{0})}}}\label{Bl32}
In order to find a set of quasi-particle monomials which generate a basis of $W_{L_{B_{l}^{(1)}}(k\Lambda_{0})}$, first we complete the list of relations among quasi-particles. Here we find the expressions for the products of the form  $x_{n_i\alpha_i}(m_i)x_{n'_j\alpha_j}(m'_j)$, where $i =1,\ldots , l-1$, $j=i+ 1$ $n_i,n_j' \in \mathbb{N}$ and $m_i,m'_j \in \mathbb{Z}$.
\par First, notice that as in the case of $B_2^{(1)}$, we have:
\begin{lem}\label{lem:S321B}
Let $n_{l-1},n_l \in \mathbb{N}$ be fixed. One has
\begin{align}\label{al:S3211B} 
\left(1-\frac{z_{l-1}}{z_{l}}\right)^{\emph{\text{min}}\left\{ 
n_{l},2n_{l-1}\right\}}  x_{n_{l}\alpha_{l}}(z_{l}) x_{n_{l-1}\alpha_{l-1}}(z_{l-1})v_{N_{B_l^{(1)}}(k\Lambda_{0})}\\
\nonumber
\in z_{l}^{-\emph{\text{min}}\left\{ 
n_{l},2n_{l-1}\right\}}W_{N_{B_l^{(1)}}(k\Lambda_{0})}\left[\left[z_{l},z_{l-1}\right]\right]. 
\end{align}
\end{lem}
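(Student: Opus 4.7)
The plan is to reduce the statement to the $B_2^{(1)}$ case established in the author's earlier work~\cite{Bu}. The essential observation is that in the $B_l$ root system the pair of simple roots $\{\alpha_{l-1},\alpha_l\}$ spans a rank-two subsystem of type $B_2$, with exactly the Cartan-integer pattern $\langle\alpha_{l-1}^{\vee},\alpha_l\rangle=-1$ and $\langle\alpha_l^{\vee},\alpha_{l-1}\rangle=-2$. Since the only Lie-algebraic data entering the commutation analysis of $x_{n_l\alpha_l}(z_l)x_{n_{l-1}\alpha_{l-1}}(z_{l-1})$ come from the subalgebra generated by $x_{\pm\alpha_{l-1}}$ and $x_{\pm\alpha_l}$ (a copy of $\mathfrak{sp}(4)$), the analysis transfers essentially verbatim from \cite{Bu}.

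First I would record the needed root-system facts: $\alpha_l+\alpha_{l-1}=\epsilon_{l-1}$ and $2\alpha_l+\alpha_{l-1}=\epsilon_{l-1}+\epsilon_l$ are positive roots, whereas $3\alpha_l+\alpha_{l-1}$ and $\alpha_l+2\alpha_{l-1}$ are not. Consequently $(\textup{ad}\,x_{\alpha_l})^{3}x_{\alpha_{l-1}}=0$ and $(\textup{ad}\,x_{\alpha_{l-1}})^{2}x_{\alpha_l}=0$, and the central term in $[x_{\alpha_l}(m_1),x_{\alpha_{l-1}}(m_2)]$ vanishes because $\langle x_{\alpha_l},x_{\alpha_{l-1}}\rangle=0$. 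Starting from the two-point commutator
\begin{equation*}
[x_{\alpha_l}(z_l),x_{\alpha_{l-1}}(z_{l-1})]=c\,x_{\alpha_l+\alpha_{l-1}}(z_l)\,z_l^{-1}\delta(z_{l-1}/z_l),
\end{equation*}
I would then iterate the commutator formula~(\ref{al:S14}) through the $n_l$ factors of $x_{\alpha_l}(z_l)$ and the $n_{l-1}$ factors of $x_{\alpha_{l-1}}(z_{l-1})$. Each individual contraction contributes one delta-function factor (raising the pole order at $z_l=z_{l-1}$ by one) and consumes one $x_{\alpha_l}(z_l)$, while each $x_{\alpha_{l-1}}(z_{l-1})$ admits at most two contractions before the chain terminates via $(\textup{ad}\,x_{\alpha_l})^{3}x_{\alpha_{l-1}}=0$. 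Imposing both caps simultaneously produces the locality identity
\begin{equation*}
(z_l-z_{l-1})^{N}\bigl[x_{n_l\alpha_l}(z_l),x_{n_{l-1}\alpha_{l-1}}(z_{l-1})\bigr]=0,\qquad N:=\min\{n_l,2n_{l-1}\}.
\end{equation*}

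To conclude, I would apply the standard locality-to-formal-power-series argument. Because $x_{n_l\alpha_l}(z_l)v_{N_{B_l^{(1)}}(k\Lambda_0)}\in W_{N_{B_l^{(1)}}(k\Lambda_0)}[[z_l]]$ and likewise for the other color, the locality identity forces $(z_l-z_{l-1})^{N}$ times the product applied to the highest weight vector to lie in
\begin{equation*}
W_{N_{B_l^{(1)}}(k\Lambda_0)}((z_l))[[z_{l-1}]]\,\cap\,W_{N_{B_l^{(1)}}(k\Lambda_0)}((z_{l-1}))[[z_l]]=W_{N_{B_l^{(1)}}(k\Lambda_0)}[[z_l,z_{l-1}]].
\end{equation*}
Rewriting $(z_l-z_{l-1})^{N}=z_l^{N}(1-z_{l-1}/z_l)^{N}$ and transferring the $z_l^{N}$ to the right-hand side yields the stated inclusion.

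The main obstacle is the bookkeeping in the iteration step: making precise that the pole order is capped simultaneously by $n_l$ (the number of $x_{\alpha_l}(z_l)$ factors available for contraction) and $2n_{l-1}$ (the number of contractions allowed before each $x_{\alpha_{l-1}}(z_{l-1})$ exhausts its Serre-type chain). This combinatorial accounting was carried out in detail for $B_2^{(1)}$ in \cite{Bu}, and since the two-color interaction sees only the rank-two subsystem spanned by $\alpha_{l-1}$ and $\alpha_l$, the argument transfers to the $B_l^{(1)}$ setting with no essential change.
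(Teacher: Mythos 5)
Your proposal is correct and follows essentially the same route as the paper, which gives no argument beyond the remark that the relation holds ``as in the case of $B_2^{(1)}$'' with reference to \cite{Bu}: the interaction of the two generating functions is governed entirely by the rank-two subsystem spanned by $\alpha_{l-1}$ and $\alpha_l$, which is of type $B_2$, so the locality exponent $\min\{n_l,2n_{l-1}\}$ and the passage to $W_{N_{B_l^{(1)}}(k\Lambda_0)}[[z_l,z_{l-1}]]$ transfer verbatim. Your contraction count (each $x_{\alpha_l}(z_l)$ factor contracts at most once, each $x_{\alpha_{l-1}}(z_{l-1})$ factor absorbs at most two before the string $\alpha_{l-1},\,\alpha_{l-1}+\alpha_l,\,\alpha_{l-1}+2\alpha_l$ terminates) reproduces exactly the bound used there.
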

\begin{flushright}
$\square$
\end{flushright}
Now, fix color $i$, $1 \leq i \leq l-2$. 
\begin{lem}\label{lem:S322B}
Let  $n_{i+1},n_i \in \mathbb{N}$ be fixed. One has
\begin{itemize}
	\item [a)] 
$(z_{1}-z_{2})^{n_i}x_{n_i\alpha_{i}}(z_{1})x_{n_{i+1}\alpha_{i+1}}(z_{2})=(z_{1}-z_{2})^{n_i}x_{n_{i+1}\alpha_{i+1}}(z_{2})x_{n_i\alpha_{i}}(z_{1});$
\item [b)]
$(z_{1}-z_{2})^{n_{i+1}}x_{n_i\alpha_{i}}(z_{1})x_{n_{i+1}\alpha_{i+1}}(z_{2})=(z_{1}-z_{2})^{n_{i+1}}x_{n_{i+1}\alpha_{i+1}}(z_{2})x_{n_i\alpha_{i}}(z_{1}).$
\end{itemize}
\end{lem}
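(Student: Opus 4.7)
The plan is to apply the vertex operator commutator formula (\ref{al:S14}) with $\alpha=\alpha_i$, $\beta=\alpha_{i+1}$, $r=n_{i+1}$ and show that the commutator $[x_{\alpha_i}(z_1),x_{n_{i+1}\alpha_{i+1}}(z_2)]$ is proportional to $z_2^{-1}\delta(z_1/z_2)$, hence annihilated by $(z_1-z_2)$. By the formula, this reduces to computing the vectors
\begin{equation*}
x_{\alpha_i}(j)\,x_{\alpha_{i+1}}(-1)^{n_{i+1}} v_{N_{B_l^{(1)}}(k\Lambda_0)}
\end{equation*}
for each $j\geq 0$, and showing they vanish unless $j=0$.

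The key computation uses the affine commutator relations and the geometry of the $B_l$ root system. First, $\langle x_{\alpha_i},x_{\alpha_{i+1}}\rangle=0$ (since $\alpha_i+\alpha_{i+1}\neq 0$), so the central term does not appear, and $[x_{\alpha_i}(j),x_{\alpha_{i+1}}(-1)] = N\,x_{\alpha_i+\alpha_{i+1}}(j-1)$ for the nonzero structure constant $N$ associated to the root $\alpha_i+\alpha_{i+1}=\epsilon_i-\epsilon_{i+2}\in R$. The crucial geometric fact, valid precisely because $1\leq i\leq l-2$, is that $\alpha_i+2\alpha_{i+1}=\epsilon_i+\epsilon_{i+1}-2\epsilon_{i+2}$ is \emph{not} a root of $B_l$; together with $\langle x_{\alpha_i+\alpha_{i+1}},x_{\alpha_{i+1}}\rangle=0$, this forces $x_{\alpha_i+\alpha_{i+1}}(k)$ to commute with each $x_{\alpha_{i+1}}(-1)$. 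Iterating, I obtain
\begin{equation*}
x_{\alpha_i}(j)\,x_{\alpha_{i+1}}(-1)^{n_{i+1}} v_0 \;=\; n_{i+1}N\,x_{\alpha_{i+1}}(-1)^{n_{i+1}-1}\,x_{\alpha_i+\alpha_{i+1}}(j-1)v_0.
\end{equation*}
For $j\geq 1$, we have $j-1\geq 0$, and since $(\mathfrak{g}\otimes t^{m})v_0=0$ for $m\geq 0$, this vector vanishes. Only the $j=0$ term survives in (\ref{al:S14}).

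Therefore $[x_{\alpha_i}(z_1),x_{n_{i+1}\alpha_{i+1}}(z_2)]=z_2^{-1}\delta(z_1/z_2)\,Y(u,z_2)$ for a definite vector $u$, and multiplying by $(z_1-z_2)$ kills it using $(z_1-z_2)\delta(z_1/z_2)=0$. To upgrade $x_{\alpha_i}(z_1)$ to $x_{n_i\alpha_i}(z_1)=x_{\alpha_i}(z_1)^{n_i}$, I use that $x_{\alpha_i}$ commutes with itself (its Lie bracket and self-pairing both vanish), and expand the commutator as a telescoping sum
\begin{equation*}
[x_{\alpha_i}(z_1)^{n_i},x_{n_{i+1}\alpha_{i+1}}(z_2)]=\sum_{k=1}^{n_i}x_{\alpha_i}(z_1)^{n_i-k}\,[x_{\alpha_i}(z_1),x_{n_{i+1}\alpha_{i+1}}(z_2)]\,x_{\alpha_i}(z_1)^{k-1};
\end{equation*}
each summand is killed by $(z_1-z_2)$. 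This gives $(z_1-z_2)\,x_{n_i\alpha_i}(z_1)x_{n_{i+1}\alpha_{i+1}}(z_2)=(z_1-z_2)\,x_{n_{i+1}\alpha_{i+1}}(z_2)x_{n_i\alpha_i}(z_1)$, and multiplication by $(z_1-z_2)^{n_i-1}$ yields (a), while multiplication by $(z_1-z_2)^{n_{i+1}-1}$ yields (b). The main obstacle is the careful tracking of nested affine commutators in the second paragraph, together with the root-system verification that $\alpha_i+2\alpha_{i+1}\notin R$ when $1\leq i\leq l-2$ — this is precisely the reason the colors $i\leq l-2$ behave like the Georgiev case for $A_{l-1}^{(1)}$, and the argument would fail for $i=l-1$ where the short root $\alpha_l$ enters.
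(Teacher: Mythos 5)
Your argument is correct and is exactly the "direct computation employing the commutator formula (\ref{al:S14})" that the paper's one-line proof invokes: the central term vanishes since $\langle x_{\alpha_i},x_{\alpha_{i+1}}\rangle=0$, only the $j=0$ term survives because $\alpha_i+2\alpha_{i+1}\notin R$ for $i\leq l-2$ and $x_\gamma(m)v_0=0$ for $m\geq 0$, and a single factor of $(z_1-z_2)$ already annihilates the resulting $z_2^{-1}\delta(z_1/z_2)$ term, giving both (a) and (b). Your closing remark correctly identifies why the case $i=l-1$ (where $\alpha_{l-1}+2\alpha_l=\epsilon_{l-1}+\epsilon_l$ is a root) must be handled separately, as in Lemma \ref{lem:S321B}.
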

\begin{proof}
Follows by direct computation employing the commutator formula \ref{al:S14} for vertex operators.
\end{proof}
From  Lemma \ref{lem:S322B} follows:
\begin{lem}\label{lem:S323B}
Let $n_{i+1},n_i \in \mathbb{N}$ be fixed. One has
\begin{align}\label{al:S3233B} 
\left(1-\frac{z_{i}}{z_{i+1}}\right)^{\emph{\text{min}}\left\{ 
n_{ i+1},n_{ i}\right\}} x_{n_{i+1}\alpha_{i+1}}(z_{i+1}) x_{n_{i}\alpha_{i}}(z_{i})v_{N_{B_l^{(1)}}(k\Lambda_{0})}\\
\nonumber
\in z_{i+1}^{-\emph{\text{min}}\left\{ 
n_{ i+1},n_{ i}\right\}} W_{N_{B_l^{(1)}}(k\Lambda_{0})}\left[\left[z_{i+1},z_{i}\right]\right]. 
\end{align}
\end{lem}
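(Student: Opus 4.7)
The plan is to derive this lemma from Lemma \ref{lem:S322B} together with the standard vertex-algebraic observation that a formal expression lying in two different completed Laurent-series spaces must lie in their intersection. First I would select the identity of Lemma \ref{lem:S322B} whose exponent equals $\min\{n_i, n_{i+1}\}$ --- part (a) when $n_i \leq n_{i+1}$, part (b) when $n_{i+1} \leq n_i$. Writing $v = v_{N_{B_l^{(1)}}(k\Lambda_0)}$ and substituting $z_1 = z_i$, $z_2 = z_{i+1}$, this yields the single equality
\[
(z_i - z_{i+1})^{\min\{n_i,n_{i+1}\}}\, x_{n_i\alpha_i}(z_i)\, x_{n_{i+1}\alpha_{i+1}}(z_{i+1})\, v = (z_i - z_{i+1})^{\min\{n_i,n_{i+1}\}}\, x_{n_{i+1}\alpha_{i+1}}(z_{i+1})\, x_{n_i\alpha_i}(z_i)\, v.
\]

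The heart of the argument is to read off a positivity property from each side. Because $x_{n_{i+1}\alpha_{i+1}}(m)\, v = 0$ for every $m > -n_{i+1}$, the series $x_{n_{i+1}\alpha_{i+1}}(z_{i+1})\, v$ lies in $W_{N_{B_l^{(1)}}(k\Lambda_0)}[[z_{i+1}]]$; since the subsequent operator $x_{n_i\alpha_i}(z_i)$ carries no $z_{i+1}$-dependence and $(z_i - z_{i+1})^{\min}$ is a polynomial, the entire left-hand side lies in $W_{N_{B_l^{(1)}}(k\Lambda_0)}((z_i))[[z_{i+1}]]$. The symmetric argument applied to $x_{n_i\alpha_i}(z_i)\, v \in W_{N_{B_l^{(1)}}(k\Lambda_0)}[[z_i]]$ shows the right-hand side lies in $W_{N_{B_l^{(1)}}(k\Lambda_0)}((z_{i+1}))[[z_i]]$. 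Since the two sides are equal, the common vector must lie in their intersection, which is exactly $W_{N_{B_l^{(1)}}(k\Lambda_0)}[[z_i, z_{i+1}]]$.

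To finish, I would use the factorization $(z_i - z_{i+1})^{\min} = (-1)^{\min} z_{i+1}^{\min}(1 - z_i/z_{i+1})^{\min}$ and divide to rewrite the conclusion as
\[
(1 - z_i/z_{i+1})^{\min\{n_i,n_{i+1}\}}\, x_{n_{i+1}\alpha_{i+1}}(z_{i+1})\, x_{n_i\alpha_i}(z_i)\, v \in z_{i+1}^{-\min\{n_i,n_{i+1}\}}\, W_{N_{B_l^{(1)}}(k\Lambda_0)}[[z_i, z_{i+1}]],
\]
which is exactly the asserted statement. The only substantive point is the intersection step: individual coefficients of the iterated product can carry arbitrarily negative powers of either variable, and the cancellations producing an honest element of $W[[z_i, z_{i+1}]]$ are invisible term-by-term. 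However, these cancellations follow automatically from comparing the two symmetric presentations of the same expression via Lemma \ref{lem:S322B}, so the main obstacle is really just cleanly organizing the bookkeeping rather than overcoming any genuine combinatorial difficulty.
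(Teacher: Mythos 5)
Your argument is correct and is essentially the paper's own proof: the paper likewise derives the lemma "from the creation property of vertex operators and Lemma \ref{lem:S322B}," and your write-up simply spells out the intersection argument $W((z_i))[[z_{i+1}]] \cap W((z_{i+1}))[[z_i]] = W[[z_i,z_{i+1}]]$ and the final division by $z_{i+1}^{\min}$ that the paper leaves implicit.
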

\begin{proof}
(\ref{al:S3233B}) is immediate from creation property of vertex operators (cf. \cite{LL}) and Lemma \ref{lem:S322B}, i.e.
\begin{equation*}(z_{i+1}-z_{i})^{\emph{\text{min}}\left\{ 
n_{ i+1},n_{ i}\right\}}x_{n_{i+1}\alpha_{i+1}}(z_{i+1}) x_{n_{i}\alpha_{i}}(z_{i})
=(z_{i+1}-z_{i})^{\emph{\text{min}}\left\{ 
n_{ i+1},n_{ i}\right\}}x_{n_{i}\alpha_{i}}(z_{i})x_{n_{i+1}\alpha_{i+1}}(z_{i+1}).\end{equation*}
\end{proof}
\begin{ax}
The obtained relation in Lemma \ref{lem:S323B} is generalization of relations obtained in \cite{G} for the case of $A_{l-1}^{(1)}$. Later, we will show similar relation for quasi-particles corresponding to the long roots in the $C_l^{(1)}$ case (see Lemma \ref{lem:S235Cl}).
\end{ax}
\par Using the above considerations and relations among quasi-particles of the same color, induction on charge-type and total energie of quasi-particle monomials (\cite{Bu}, \cite{G}), follows the proof of the following proposition: 
\begin{prop}\label{prop:S22B} The set $\mathfrak{B}_{W_{L_{B_l^{(1)}}(k\Lambda_{0})}}=\left\{bv_{L_{B_l^{(1)}}(k\Lambda_{0})}:b \in B_{W_{L_{B_l^{(1)}}(k\Lambda_{0})}}\right\}$, where
\begin{equation}\label{SkupLB}
B_{W_{L_{B_l^{(1)}}(k\Lambda_{0})}}= \bigcup_{\substack{n_{r_{1}^{(1)},1}\leq \ldots \leq n_{1,1}\leq 
k\\\substack{ \ldots \\n_{r_{l-1}^{(1)},l-1}\leq \ldots \leq n_{1,l-1}\leq k}\\n_{r_{l}^{(1)},l}\leq \ldots \leq n_{1,l}\leq 2k}}\left(\text{or, equivalently,} \ \ \ 
\bigcup_{\substack{r_{1}^{(1)}\geq \cdots\geq r_{1}^{(k)}\geq 0\\\substack{\cdots \\r_{l-1}^{(1)}\geq \cdots\geq r_{l-1}^{(k)}\geq 
0}\\r_{l}^{(1)}\geq \cdots\geq r_{l}^{(2k)}\geq 
0}}\right)
\end{equation}
\begin{equation*}\left\{b\right.= b(\alpha_{l})\cdots b(\alpha_{1})
=x_{n_{r_{l}^{(1)},l}\alpha_{l}}(m_{r_{l}^{(1)},l})\cdots x_{n_{1,l}\alpha_{l}}(m_{1,l})\cdots x_{n_{r_{1}^{(1)},1}\alpha_{1}}(m_{r_{1}^{(1)},1})\cdots  x_{n_{1,1}\alpha_{1}}(m_{1,1}):\end{equation*}
\begin{align}\nonumber
\left|
\begin{array}{l}
m_{p,i}\leq  -n_{p,i}+ \sum_{q=1}^{r_{i-1}^{(1)}}\text{min}\left\{n_{q,i-1},n_{p,i}\right\}- \sum_{p>p'>0} 
2 \ \text{min}\{n_{p,i}, n_{p',i}\}, \\
\ \ \ \ \ \ \ \ \ \ \ \ \ \ \ \ \ \ \ \ \ \ \ \ \ \ \ \ \ \ \ \ \ \ \ \ \ \ \ \ \ \ \ \ \ \ \ \ \ \ \ \ \ \ \ \ \ \ \ \ \ \ \ \ \ \ 1\leq  p\leq r_{i}^{(1)}, \ 1 \leq i \leq l-1;\\
m_{p+1,i} \leq   m_{p,i}-2n_{p,i} \  \text{if} \ n_{p+1,i}=n_{p,i}, \ 1\leq  p\leq r_{i}^{(1)}-1, \ 1 \leq i \leq l-1;\\
m_{p,l}\leq  -n_{p,l} + \sum_{q=1}^{r_{l-1}^{(1)}}\text{min}\left\{ 2n_{q,l-1},n_{p,l }\right\} - \sum_{p>p'>0} 2 \ \text{min}\{n_{p,l}, n_{p',l}\}, \  1\leq  p\leq r_{l}^{(1)};\\
m_{p+1,l}\leq  m_{p,l}-2n_{p,l} \  \text{if} \ n_{p,l}=n_{p+1,l}, \  1\leq  p\leq r_{l}^{(1)}-1 
\end{array}\right\},
\end{align}
and where $r_0^{(1)}=0$, spans the principal subspace $W_{L_{B_l^{(1)}}(k\Lambda_{0})}$.
\end{prop}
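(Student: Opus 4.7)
My plan is to prove the proposition by induction on a lexicographic well-order on quasi-particle monomials: first on charge-type $\mathfrak{R}'$, then on total energy $-\sum_{p,i}m_{p,i}$ within a fixed charge-type, as in \cite{Bu,G}. By Lemma \ref{prvalemaB}, the principal subspace $W_{L_{B_l^{(1)}}(k\Lambda_{0})}$ is spanned by vectors $b(\alpha_l)\cdots b(\alpha_1)v_{L_{B_l^{(1)}}(k\Lambda_{0})}$ with color blocks in the prescribed order, and the identity
\begin{equation*}
x_{r\alpha_i}(m)=\sum_{m_1+\cdots+m_r=m} x_{\alpha_i}(m_r)\cdots x_{\alpha_i}(m_1)
\end{equation*}
allows each monochromatic block to be rewritten as a summable linear combination of quasi-particle monomials of arbitrary charge- and dual-charge-types.

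The charge caps $n_{p,i}\le k$ for $i\le l-1$ and $n_{p,l}\le 2k$ follow directly from the integrability relations (\ref{al:S15})-(\ref{al:S16}): any quasi-particle of excess charge arises as a coefficient of $x_{\alpha_i}(z)^{k+1}$ or $x_{\alpha_l}(z)^{2k+1}$ and therefore vanishes on $L_{B_l^{(1)}}(k\Lambda_0)$. The intra-color difference-two condition $m_{p+1,i}\le m_{p,i}-2n_{p,i}$ (when $n_{p+1,i}=n_{p,i}$) is imposed by Corollary \ref{cor:S21}: a violating pair $x_{n\alpha_i}(m)x_{n\alpha_i}(m')$ with $m'-2n<m\le m'$ is rewritten as a combination of monomials satisfying the bound plus monomials carrying a strictly larger quasi-particle $x_{(n+1)\alpha_i}(j')$, which lie in a strictly larger charge-type class and are disposed of by the outer induction on $\mathfrak{R}'$.

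The inter-color bound on $m_{p,i}$ is produced by iterating Lemmas \ref{lem:S321B} and \ref{lem:S323B}. For $1\le i\le l-1$, rewriting Lemma \ref{lem:S323B} with $(i+1,i)\mapsto (i,i-1)$, I extract coefficients from
\begin{equation*}
\left(1-\tfrac{z_{i-1}}{z_i}\right)^{\min\{n_{p,i},n_{q,i-1}\}} x_{n_{p,i}\alpha_i}(z_i) x_{n_{q,i-1}\alpha_{i-1}}(z_{i-1}) v_{N_{B_l^{(1)}}(k\Lambda_0)}\in z_i^{-\min\{n_{p,i},n_{q,i-1}\}} W\bigl[[z_i,z_{i-1}]\bigr];
\end{equation*}
multiplying through by the analogous factor for each quasi-particle $x_{n_{q,i-1}\alpha_{i-1}}$ in $b(\alpha_{i-1})$ and using the creation property (which makes $x_{n_{p,i}\alpha_i}(z_i)v$ a formal power series in $z_i$, contributing the summand $-n_{p,i}$) gives $m_{p,i}\le -n_{p,i}+\sum_{q=1}^{r_{i-1}^{(1)}}\min\{n_{q,i-1},n_{p,i}\}$ modulo terms of smaller order. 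For color $l$, the same argument with Lemma \ref{lem:S321B} substitutes $\min\{2n_{q,l-1},n_{p,l}\}$ for $\min\{n_{q,l-1},n_{p,l}\}$, which is exactly where the factor of $2$ in the stated bound originates. Combined with the intra-color subtraction $-\sum_{p>p'>0} 2\min\{n_{p,i},n_{p',i}\}$ inherited from Corollary \ref{cor:S21}, this yields the full bound of the proposition.

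The main obstacle is coordinating the reductions so that the well-order strictly decreases at every step: a monochromatic reduction in color $i$ can create a higher-charge quasi-particle whose interactions with its neighbors must be reassessed, and a polychromatic reduction between colors $i$ and $i-1$ can re-expand vectors in ways that momentarily invalidate bounds already secured between colors $i+1$ and $i$. Following \cite{Bu,G}, the remedy is to process the colors from left to right, always give strict priority to reduction terms that strictly increase charge-type (since these fall into an already-treated branch of the outer induction), and use the decrease in total energy as the termination measure within each charge-type class. Careful bookkeeping along these lines shows that every element of $W_{L_{B_l^{(1)}}(k\Lambda_0)}$ is, after finitely many reductions, expressed as a linear combination of elements of $\mathfrak{B}_{W_{L_{B_l^{(1)}}(k\Lambda_0)}}$.
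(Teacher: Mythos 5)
Your proposal is correct and follows essentially the same route as the paper, which itself only sketches this argument by appealing to the relations of Lemmas \ref{lem:S321B}--\ref{lem:S323B}, the same-color relations, and induction on charge-type and total energy as in \cite{Bu} and \cite{G}. Your identification of the integrability relations (\ref{al:S15})--(\ref{al:S16}) as the source of the charge caps (with $\alpha_l$ short, hence the bound $2k$), Corollary \ref{cor:S21} as the source of the intra-color difference conditions, and Lemmas \ref{lem:S321B} and \ref{lem:S323B} as the source of the inter-color energy bounds matches the intended proof.
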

\begin{flushright}
$\square$
\end{flushright}

\subsection{Proof of linear independence} 
Here we introduce operators which we use in our proof of linear independence of the set $\mathfrak{B}_{W_{L_{B_l^{(1)}}(k\Lambda_{0})}}$. 
\subsubsection{\textbf{Projection \texorpdfstring{$\pi_{\mathfrak{R}}$}{pi{R}}}}\label{ss:projB}
We start with a projection $\pi_{\mathfrak{R}}$, which is a generalisation of projection introduced in \cite{Bu}. If we restrict the action of the Cartan subalgebra $\mathfrak{h}=\mathfrak{h} \otimes 1$ to the principal subspace $W_{L_{B_l^{(1)}}(\Lambda_{0})}$ of level $1$ standard  modules $L_{B_l^{(1)}}(\Lambda_{0})$, we get the direct sum of vector spaces: 
 \begin{equation*}
 W_{L_{B_l^{(1)}}(\Lambda_{0})}= \bigoplus_{u_l,\ldots, u_1\geq 0} {W_{L_{B_l^{(1)}}(\Lambda_{0})}}_{(u_l, \ldots, u_1)},
\end{equation*}
where 
\begin{equation*}
{W_{L_{B_l^{(1)}}(\Lambda_{0})}}_{(u_l, \ldots, u_1)}={W^B_{L(\Lambda_{0})}}_{u_l\alpha_l + \cdots +u_1\alpha_1}
\end{equation*}
is  the weight subspace of weight 
\begin{equation*}
u_l\alpha_l + \cdots +u_1\alpha_1 \in Q.
\end{equation*}
\par Fix a level $k> 1$. The principal subspace $W_{L_{B_l^{(1)}}(k\Lambda_{0})}$ has a realization as a subspace of the tensor product
of $k$ principal subspaces $W_{L_{B_l^{(1)}}(\Lambda_{0})}$ of level 1
\begin{align*}
W_{L_{B_l^{(1)}}(k\Lambda_{0})}
\subset  W_{L_{B_l^{(1)}}(\Lambda_{0})}\otimes \cdots \otimes  W_{L_{B_l^{(1)}}(\Lambda_{0})}
\subset  L_{B_l^{(1)}}(\Lambda_{0})^{\otimes k},
\end{align*}
where
\begin{equation*}
v_{L_{B_l^{(1)}}(k\Lambda_0)}=\underbrace{v_{L_{B_l^{(1)}}(\Lambda_{0})} \otimes \cdots \otimes v_{L_{B_l^{(1)}}(\Lambda_{0})}}_{k \ \text{factors}}\end{equation*}
is the highest weight vector of weight $k\Lambda_0$. 
\par For a chosen dual-charge-type 
\begin{equation*}
\mathfrak{R}=\left( r_{l}^{(1)}, \ldots ,r_{l}^{(2k)}; r_{l-1}^{(1)}, \ldots , r_{l-1}^{(k)};\ldots; r_{1}^{(1)}, \ldots , r_{1}^{(k)}\right)
\end{equation*}
and the corresponding charge-type $\mathfrak{R}'$
\begin{equation*}\mathfrak{R}'=\left(n_{r_{l}^{(1)},l}, \ldots ,n_{1,l};\ldots ;n_{r_{1}^{(1)},1}, \ldots ,n_{1,1}\right),\end{equation*} 
denote with $\pi_{\mathfrak{R}}$ the projection of principal subspace $W_{L_{B_l^{(1)}}(k\Lambda_{0})}$ 
to the subspace
\begin{equation*}
 {W_{L_{B_l^{(1)}}(\Lambda_0)}}_{(\mu^{(k)}_{l};r_{l-1}^{(k)};\ldots; r_{1}^{(k)})}\otimes \cdots \otimes  {W_{L_{B_l^{(1)}}(\Lambda_0)}}_{(\mu^{(1)}_{l};r_{l-1}^{(1)};\ldots;r_{1}^{(1)})},
\end{equation*}
where 
\begin{equation*}  
\mu^{(t)}_{l}=r^{(2t)}_{l}+ r^{(2t-1)}_{l},
\end{equation*}
for every $1 \leq  t \leq k$ (cf. Figure \ref{slika1B} and \ref{slika2B}). The projection can be in an obvious way generalized to the space of formal Laurent series
with coefficients in $W_{L_{B_l^{(1)}}(\Lambda_0)} \otimes \cdots \otimes  W_{L_{B_l^{(1)}}(\Lambda_0)}$. Let
\begin{equation}\label{eq:S331B}
x_{n_{r_{l}^{(1)},l}\alpha_{l}}(z_{r_{l}^{(1)},l}) \cdots     x_{n_{1,l}\alpha_{l}}(z_{1,l})x_{n_{r_{l-1}^{(1)},l-1}\alpha_{l-1}}(z_{r_{l-1}^{(1)},l-1}) \cdots     x_{n_{1,l-1}\alpha_{l-1}}(z_{1,l-1})\cdots 
\end{equation}
\begin{equation*}
\cdots x_{n_{r_{1}^{(1)},1}\alpha_{1}}(z_{r_{1}^{(1)},1})\cdots  x_{n_{1,1}\alpha_{1}}(z_{1,1})
\end{equation*}
be a generating function of the chosen dual-charge-type $\mathfrak{R}$ and the corresponding charge-type $\mathfrak{R}'$. 
\par From the relation $x_{2\alpha_{i}}(z)=0$, $1\leq i \leq l-1$, on the principal subspace $W_{L_{B_l^{(1)}}(\Lambda_0)}$ and the definition of the action of Lie algebra on the modules, follows that $n_{p,i}$ generating functions $x_{\alpha_{i}}(z_{p,i})$ ($1\leq p \leq r^{(1)}_{i}$), whose product generates a quasi-particle of charge $n_{p,i}$, 
 \enquote{are placed at} the first (from right to left) $n_{p,i}$ tensor factors: 
\begin{equation*}
 x_{n^{(k)}_{p,i}\alpha_{i}}(z_{p,i})\otimes  x_{n^{(k-1)}_{p,i}\alpha_{i}}(z_{p,i}) \otimes \cdots \otimes      x_{n^{(2)}_{p,i}\alpha_{i}}(z_{p,i})\otimes  x_{n^{(1)}_{p,i}\alpha_{i}}(z_{p,i}),
\end{equation*}
where
\begin{equation*}
0 \leq  n^{(t)}_{p,i} \leq 1, 1 \leq t \leq k, \ n^{(1)}_{p,i}\geq n^{(2)}_{p,i}\geq \ldots \geq  n^{(k-1)}_{p,i}\geq  n^{(k)}_{p,i}, \ 
n_{p,i}=\sum_{t=1}^k n^{(t)}_{p,i},
\end{equation*}
for every every $p$, $1 \leq p \leq r_{i}^{(1)}$, so that, in the $t$-tensor factor from the right ($1 \leq t \leq k$), we have: 
\begin{equation*}
 \cdots x_{n_{r^{(t)}_{i},i}^{(t)}\alpha_{i}}(z_{r_{i}^{(t)},i})
  \cdots  x_{n{_{1,i}^{(t)}\alpha_{i}}}(z_{1,i})\cdots v_{L_{B_l^{(1)}}(\Lambda_{0})} \otimes \cdots, \ \ 
\end{equation*}
as in the in the example in Figure \ref{slika1B}, where each box represents $n^{(t)}_{p,i}$.
\begin{figure}[h!tb]
\centering
\setlength{\unitlength}{5mm}
\begin{picture}(10,10)
\linethickness{0.075mm}
\multiput(0,0)(1,0){1}
{\line(0,1){1}}
\multiput(1,0)(1,0){1}
{\line(0,1){2}}
\multiput(2,0)(1,0){5}
{\line(0,1){3}}
\multiput(2,1)(1,0){1}
{\line(1,0){6}}
\multiput(2,3)(1,0){1}
{\line(1,0){7}}
\multiput(1,0)(1,0){3}
{\line(0,1){2}}
\multiput(1,2)(0,1){1}
{\line(13,0){8}}
\multiput(4,0)(1,0){1}
{\line(0,1){2}}
\multiput(5,0)(1,0){2}
{\line(0,1){2}}
\multiput(8,0)(1,0){1}
{\line(0,1){2}}
\multiput(9,0)(1,0){1}
{\line(0,1){3}}
\multiput(8,5)(1,0){2}
{\line(0,1){3}}
\multiput(7,0)(1,0){1}
{\line(0,1){2}}
\multiput(7,5)(1,0){2}
{\line(0,1){3}}
\multiput(5,5)(1,0){2}
{\line(0,1){3}}
\multiput(5,7)(1,0){2}
{\line(1,0){3}}
\multiput(5,8)(1,0){2}
{\line(1,0){3}}
\multiput(4,7)(1,0){2}
{\line(1,0){3}}
\multiput(4,6)(1,0){2}
{\line(1,0){4}}
\multiput(4,5)(1,0){2}
{\line(1,0){4}}
\multiput(4,5)(1,0){1}
{\line(0,1){2}}
\multiput(0,0)(0,1){2}
{\line(1,0){9}}
\multiput(5,0)(1,0){2}
{\line(0,1){2}}
\multiput(7,0)(1,0){1}
{\line(0,1){3}}
\multiput(8,0)(1,0){2}
{\line(0,1){3}}
\multiput(6,0)(1,0){2}
{\line(0,1){2}}
\put(0,-0.5){\scriptsize{\footnotesize{$n_{r_i^{(1)},i}$}}}
\put(5.1,-0.5){\scriptsize{\footnotesize{$n_{r_i^{(k)},i}$}}}
\put(8.2,-0.5){\scriptsize{\footnotesize{$n_{1,i}$}}}
\put(4.5,3.7){$\textbf{\vdots}$}
\put(5.5,3.7){$\textbf{\vdots}$}
\put(6.5,3.7){$\textbf{\vdots}$}
\put(7.5,3.7){$\textbf{\vdots}$}
\put(8.5,3.7){$\textbf{\vdots}$}
\put(9.3,3.7){$\textbf{\vdots}$}
\put(-0.9,3.7){$\textbf{\vdots}$}
\put(-1.1,0.3){\footnotesize{$r_i^{(1)}$}}
\put(-1.1,1.3){\footnotesize{$r_i^{(2)}$}}
\put(-1.1,2.3){\footnotesize{$r_i^{(3)}$}}
\put(9.3,0.3){\footnotesize{$v_{L_{B_l^{(1)}}(\Lambda_{0})}$}}
\put(9.3,1.3){\footnotesize{$v_{L_{B_l^{(1)}}(\Lambda_{0})}$}}
\put(9.3,2.3){\footnotesize{$v_{L_{B_l^{(1)}}(\Lambda_{0})}$}}
\put(9.3,5.3){\footnotesize{$v_{L_{B_l^{(1)}}(\Lambda_{0})}$}}
\put(9.3,6.3){\footnotesize{$v_{L_{B_l^{(1)}}(\Lambda_{0})}$}}
\put(9.3,7.3){\footnotesize{$v_{L_{B_l^{(1)}}(\Lambda_{0})}$}}
\put(-1.1,5.3){\footnotesize{$r_i^{(k-2)}$}}
\put(-1.1,6.3){\footnotesize{$r_i^{(k-1)}$}}
\put(-1.1,7.3){\footnotesize{$r_i^{(k)}$}}
\end{picture}
\bigskip
\caption{Sketch of projection $\pi_{\mathfrak{R}}$ for color $i$, $1 \leq i\leq l-1$}\label{slika1B}
\end{figure}
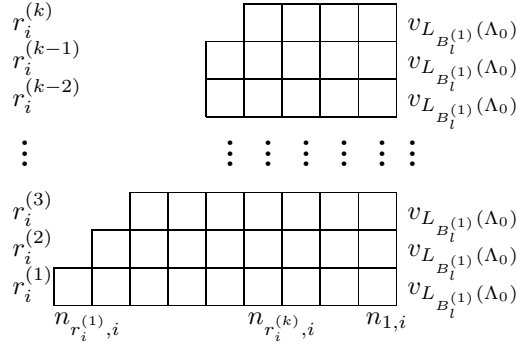
\par From the relation $x_{3\alpha_{l}}(z)=0$ on the principal subspace $W_{L_{B_l^{(1)}}(\Lambda_0)}$, follows that at most, two generating functions of color $i=l$ \enquote{can be placed at} every tensor factor. If $n_{p,l}$ ($1\leq p \leq r_l^{(1)}$) is an even number, then two generating functions $x_{\alpha_{l}}(z_{p,l})$ \enquote{are placed at} the  first $\frac{n_{p,l}}{2}$ tensor factors (from right to left) and if $n_{p,l}$ is an odd number, then two generating functions  $x_{\alpha_{l}}(z_{p,l})$ \enquote{are placed at} the first $\frac{n_{p,l}-1}{2}$ tensor factors (from right to left), and the last generating  function $x_{\alpha_{l}}(z_{p,l})$ \enquote{is placed at} $\frac{n_{p,l}-1}{2}+1$ tensor factor:  
\begin{equation*}
 x_{n^{(k)}_{p,l}\alpha_{1}}(z_{p,l}) \otimes  x_{n^{(k-1)}_{p,l}\alpha_{1}}(z_{p,l}) \otimes  \cdots \otimes   x_{n^{(2)}_{p,l}\alpha_{1}}(z_{p,l})\otimes  x_{n^{(1)}_{p,l}\alpha_{1}}(z_{p,l}),
\end{equation*} 
where
\begin{equation*}
0 \leq  n^{(t)}_{p,l}\leq  2,  \ n^{(1)}_{p,l}\geq  n^{(2)}_{p,l} \geq  \ldots  \geq  n^{(k-1)}_{p,l} \geq  n^{(k)}_{p,l}, \ n_{p,l}=\sum_{t=1}^k n^{(t)}_{p,l},
\end{equation*}
for every every $p$, $1 \leq p \leq r_{l}^{(1)}$, so that at most one $n^{(t)}_{p,l}$ ($1 \leq t \leq k$) can be $1$ 
and so that, in every $t$-tensor factor from the right ($1 \leq t \leq k$), we have: 
\begin{equation*}
\cdots \otimes x_{n_{r^{(2t-1)}_{l},l}^{(t)}\alpha_{l}}(z_{r_{1}^{(2t-1)},l}) \cdots        x_{n_{r^{(2t)}_{l},l}^{(t)}\alpha_{l}}(z_{r_{l}^{(2t)},l})
\cdots  x_{n{_{1,l}^{(t)}\alpha_{l}}}(z_{1,l})\cdots  v_{L_{B_l^{(1)}}(\Lambda_{0})}\otimes \cdots \ \ . 
\end{equation*}
This situation is shown in the example in Figure \ref{slika2B}.
\begin{figure}[h!tb]
\centering
\setlength{\unitlength}{5mm}
\begin{picture}(10,10)
\linethickness{0.075mm}
\multiput(0,0)(1,0){1}
{\line(0,1){1}}
\multiput(1,0)(1,0){1}
{\line(0,1){2}}
\multiput(2,0)(1,0){8}
{\line(0,1){4}}
\multiput(2,1)(1,0){1}
{\line(1,0){6}}
\multiput(2,3)(1,0){1}
{\line(1,0){7}}
\multiput(2,4)(1,0){1}
{\line(1,0){7}}
\multiput(1,0)(1,0){3}
{\line(0,1){2}}
\multiput(1,2)(0,1){1}
{\line(13,0){8}}
\multiput(4,0)(1,0){1}
{\line(0,1){2}}
\multiput(5,0)(1,0){2}
{\line(0,1){2}}
\multiput(8,0)(1,0){1}
{\line(0,1){2}}
\multiput(9,0)(1,0){1}
{\line(0,1){3}}
\multiput(8,6)(1,0){2}
{\line(0,1){3}}
\multiput(7,0)(1,0){1}
{\line(0,1){2}}
\multiput(7,6)(1,0){2}
{\line(0,1){3}}
\multiput(5,6)(1,0){2}
{\line(0,1){2}}
\multiput(5,7)(1,0){2}
{\line(1,0){3}}
\multiput(5,8)(1,0){2}
{\line(1,0){3}}
\multiput(6,9)(1,0){2}
{\line(1,0){2}}
\multiput(6,8)(1,0){1}
{\line(0,1){1}}
\multiput(7,8)(1,0){3}
{\line(0,1){2}}
\multiput(7,10)(1,0){2}
{\line(1,0){1}}
\multiput(5,8)(1,0){2}
{\line(1,0){2}}
\multiput(4,7)(1,0){2}
{\line(1,0){4}}
\multiput(4,6)(1,0){2}
{\line(1,0){4}}
\multiput(4,6)(1,0){1}
{\line(0,1){1}}
\multiput(0,0)(0,1){2}
{\line(1,0){9}}
\multiput(5,0)(1,0){2}
{\line(0,1){2}}
\multiput(7,0)(1,0){1}
{\line(0,1){3}}
\multiput(8,0)(1,0){2}
{\line(0,1){3}}
\multiput(6,0)(1,0){2}
{\line(0,1){2}}
\put(0,-0.5){\footnotesize{$n_{r_l^{(1)},i}$}}
\put(8.5,-0.5){\scriptsize{\footnotesize{$n_{1,l}$}}}
\put(4.5,4.7){$\textbf{\vdots}$}
\put(5.5,4.7){$\textbf{\vdots}$}
\put(6.5,4.7){$\textbf{\vdots}$}
\put(7.5,4.7){$\textbf{\vdots}$}
\put(8.5,4.7){$\textbf{\vdots}$}
\put(9.3,4.7){$\textbf{\vdots}$}
\put(-0.9,4.7){$\textbf{\vdots}$}
\put(-1.1,0.3){\footnotesize{$r_l^{(1)}$}}
\put(-1.1,1.3){\footnotesize{$r_l^{(2)}$}}
\put(-1.1,2.3){\footnotesize{$r_l^{(3)}$}}
\put(-1.1,3.3){\footnotesize{$r_l^{(4)}$}}
\put(9.3,0.9){\footnotesize{$v_{L_{B_l^{(1)}}(\Lambda_{0})}$}}
\put(9.3,2.9){\footnotesize{$v_{L_{B_l^{(1)}}(\Lambda_{0})}$}}
\put(9.3,6.9){\footnotesize{$v_{L_{B_l^{(1)}}(\Lambda_{0})}$}}
\put(9.3,8.9){\footnotesize{$v_{L_{B_l^{(1)}}(\Lambda_{0})}$}}
\put(-1.1,6.3){\footnotesize{$r_l^{(2k-3)}$}}
\put(-1.1,7.3){\footnotesize{$r_l^{(2k-2)}$}}
\put(-1.1,8.3){\footnotesize{$r_l^{(2k-1)}$}}
\put(-1.1,9.3){\footnotesize{$r_l^{(2k)}$}}
\end{picture}
\bigskip
\caption{Sketch of projection $\pi_{\mathfrak{R}}$ for color $i=l$}\label{slika2B}
\end{figure}

\par Now, we have the projection of the generating function (\ref{eq:S331B}) 
\begin{align}
\label{projekcijaB}
\pi_{\mathfrak{R}}& x_{n_{r_{l}^{(1)},l}\alpha_{l}}(z_{r_{l}^{(1)},l})\cdots  x_{n_{1,1}\alpha_{1}}(z_{1,1}) \ v_{L_{B_l^{(1)}}(k\Lambda_{0})}&\\
\nonumber
=&\text{C} \ x_{n^{(k)}_{r^{(2k-1)}_{l},l}\alpha_{l}}(z_{r_{l}^{(2k-1)},l})\cdots                x_{n^{(k)}_{r^{(2k)}_{l},l}\alpha_{l}}(z_{r^{(2k)}_{l},l})\cdots  x_{n^{(k)}_{1,l}\alpha_{l}}(z_{1,l})x_{n^{(k)}_{r_{l-1}^{(1)},l-1}\alpha_{l-1}}(z_{r_{l-1}^{(1)},l-1}) \cdots     &\\
\nonumber
& \ \ \ \ \ \ \ \ \ \ \ \ \ \ \ \ \ \ \ \ \ \ \ \ \ \ \ \cdots x_{n^{(k)}_{1,l-1}\alpha_{l-1}}(z_{1,l-1})\cdots x_{n^{(k)}_{r^{(k)}_{1},1}\alpha_{1}}(z_{r_{1}^{(k)},1})\cdots   x_{n^{(k)}_{1,1}\alpha_{1}}(z_{1,1}) \ v_{L_{B_l^{(1)}}(\Lambda_{0})}&\\
\nonumber
& \ \ \ \ \ \ \ \ \ \ \ \ \ \ \ \ \otimes \ldots \otimes&\\
\nonumber
\otimes &x_{n_{r^{(1)}_{l},l}^{(1)}\alpha_{l}}(z_{r_{l}^{(1)},l})\cdots  x_{n_{r^{(2)}_{l},l}^{(1)}\alpha_{l}}(z_{r_{l}^{(2)},l})\cdots 
  x_{n_{1,l}^{(1)}\alpha_{l}}(z_{1,l})x_{n^{(1)}_{r_{l-1}^{(1)},l-1}\alpha_{l-1}}(z_{r_{l-1}^{(1)},l-1}) \cdots     & \\ 
\nonumber
&   \ \ \ \ \ \ \ \ \ \ \ \ \ \ \ \ \ \ \ \ \ \ \ \ \ \cdots x_{n^{(1)}_{1,l-1}\alpha_{l-1}}(z_{1,l-1})\cdots x_{n_{r^{(1)}_{1},1}^{(1)}\alpha_{1}}(z_{r_{1}^{(1)},1})\cdots     x_{n{_{1,1}^{(1)}\alpha_{1}}}(z_{1,1}) \ v_{L_{B_l^{(1)}}(\Lambda_{0})},&
\end{align}
where $\text{C} \in \mathbb{C}^{*}$. 
\par From the above considerations follows that the projection of the monomial vector $bv_{L_{B_l^{(1)}}(k\Lambda_{0})}$, where $b\in B_{W_{L_{B_l^{(1)}}(k\Lambda_0)}}$ is a monomial 
\begin{equation}\label{eq:polB}
b=x_{n_{r_{l}^{(1)},l}\alpha_{l}}(m_{r_{l}^{(1)},l})\cdots      x_{n_{1,l}\alpha_{l}}(m_{1,l})\cdots x_{n_{r_{1}^{(1)},1}\alpha_{1}}(m_{r_{1}^{(1)},1})\cdots  x_{n_{1,1}\alpha_{1}}(m_{1,1})
\end{equation}
colored with color-type $(r_{l},\ldots, r_{1}),$ charge-type $\mathfrak{R}'$ and dual-charge-type $\mathfrak{R}$, 
 is a coefficient of the projection of the generating function  (\ref{projekcijaB}) which we denote as $$\pi_{\mathfrak{R}}bv_{L_{B_l^{(1)}}(k\Lambda_0)}.$$ 
\begin{ax}\label{S62LB}
Here we note, that if $\bar{b}\in B_{W_{L_{B_l^{(1)}}(k\Lambda_0)}}$ is monomial such that it  
is of charge-type $$(\bar{n}_{{\bar{r}}_{l}^{(1)},l}, \ldots ,  \bar{n}_{1,l};\ldots; \bar{n}_{{\bar{r}}_{1}^{(1)},1}, \ldots , \bar{n}_{1,1}),$$ dual-charge-type 
$ \overline{\mathfrak{R}} =\left( {\bar{r}}_{l}^{(1)}, \ldots ,{\bar{r}}_{l}^{(2k)};\ldots; {\bar{r}}_{1}^{(1)}, \ldots , {\bar{r}}_{1}^{(k)}\right)$ and such that 
\begin{equation*}\mathfrak{R}'< \overline{\mathfrak{R}'} ,
\end{equation*} then, from the definition of projection, follows that 
\begin{equation*}
\pi_{\mathfrak{R}}\bar{b}v_{L_{B_l^{(1)}}(k\Lambda_0)}=0.
\end{equation*}
This argument we will use in our proof of linear independance.
\end{ax}

\subsubsection{\textbf{A coefficient of an intertwining operator}}\label{ss:intertB} 
Denote by $Y (\cdot, z)$ the vertex operator which determines the structure of $L_{B_l^{(1)}}(\Lambda_0)$-module $L_{B_l^{(1)}}(\Lambda_1)$. We shall
use the coefficient of intertwining operator $I(\cdot , z)$ of type \begin{equation*} 
\binom{L_{B_l^{(1)}}(\Lambda_1)}{L_{B_l^{(1)}}(\Lambda_1) \ L_{B_l^{(1)}}(\Lambda_0)},
\end{equation*}
defined by  
\begin{align}\label{al:S33B}
I(w,z)v=\exp(zL(-1))Y(v,-z)w, \ \  w \in L_{B_l^{(1)}}(\Lambda_1), \ v \in L_{B_l^{(1)}}(\Lambda_0)
\end{align}
(cf. \cite{FHL}). 
If we use the commutator formula  
\begin{equation*} 
\left[x(m),I(v_{L_{B_l^{(1)}}(\Lambda_1)},z)\right]=\sum_{j\geq 0} \binom {m}{j}z^{m-j}I(x(j)v_{L_{B_l^{(1)}}(\Lambda_1)},z)
\end{equation*}
(cf. (2.13) in \cite{Li2}), where $x_{\alpha_i}(m) \in \widehat{\mathfrak{g}}$ for $\alpha_i \in \Pi$, we have:
\begin{equation*}
\left[x_{\alpha_i}(m), I(v_{L_{B_l^{(1)}}(\Lambda_1)},z)\right]=0.
\end{equation*}
\par We define the following coefficient of an intertwining operator 
\begin{equation*} 
A_{\omega_1}=\text{Res}_z \ z^{-1}I(v_{L_{B_l^{(1)}}(\Lambda_1)}, z)
\end{equation*}
and by (\ref{al:S33B}), we have
\begin{equation}\label{eq:S34B}
A_{\omega_1}v_{L_{B_l^{(1)}}(\Lambda_0)}=v_{L_{B_l^{(1)}}(\Lambda_1)}.
\end{equation}
Let $s \leq k$. We consider the operator on $L_{B_l^{(1)}}(\Lambda_0) \otimes \cdots \otimes L_{B_l^{(1)}}(\Lambda_0)$ defined as
\begin{equation}\label{eq:S35B}
A_s=1\otimes\cdots \otimes  A_{\omega_{1}} \otimes \underbrace{1 \otimes \cdots \otimes 1}_{s-1 \ \text{factors}}.
\end{equation}
If we act with this operator on the vector $v_{L_{B_l^{(1)}}(k\Lambda_0)}=v_{ L_{B_l^{(1)}}(\Lambda_0)}\otimes \cdots \otimes v_{L_{B_l^{(1)}}(\Lambda_0)}$, it follows from (\ref{eq:S34B}):
\begin{equation}\label{eq:S36B}
A_s(v_{L_{B_l^{(1)}}(k\Lambda_0)})=v_{ L_{B_l^{(1)}}(\Lambda_0)}\otimes
\cdots \otimes v_{ L_{B_l^{(1)}}(\Lambda_0)}\otimes v_{ L_{B_l^{(1)}}(\Lambda_1)}\otimes \underbrace{v_{ L_{B_l^{(1)}}(\Lambda_0)}\otimes \cdots\otimes v_{ L_{B_l^{(1)}}(\Lambda_0)}}_{s-1 \ \text{factors}}.
\end{equation}
\par Set $b\in B_{W_{L_{B_l^{(1)}}(k\Lambda_0)}}$ as in (\ref{eq:polB}). It follows that \begin{equation*}A_s\pi_{\mathfrak{R}}bv_{L_{B_l^{(1)}}(k\Lambda_0)}
\end{equation*}
is the coefficient of  
\begin{equation*}
A_s\pi_{\textsl{\emph{R}}}x_{n_{r_{2}^{(1)},2}\alpha_{2}}(z_{r_{2}^{(1)},2})\cdots x_{s\alpha_{1}}(z_{1,1})v_{L_{B_l^{(1)}}(k\Lambda_0)}.
\end{equation*}
From (\ref{eq:S36B}), it follows that operator $A_{\omega_{1}}$ acts only on the $s$-th tensor factor from the right: 
\begin{equation*}\otimes  x_{n^{(s)}_{r^{(2s-1)}_{l},l}\alpha_{l}}(z_{r_{l}^{(2s-1)},l})\cdots   x_{n^{(s)}_{r^{(2s)}_{l},l}\alpha_{l}}(z_{r^{(2s)}_{l},l})\cdots  x_{n^{(s)}_{1,l}\alpha_{l}}(z_{1,l})\end{equation*}
\begin{equation*}x_{n^{(s)}_{r^{(s)}_{1},1}\alpha_{1}}(z_{r_{1}^{(s)},1})\cdots  x_{\alpha_{1}}(z_{1,1})v_{L_{B_l^{(1)}}(\Lambda_{0})}\otimes,\end{equation*}
where $ 0 \leq  n^{(s)}_{p,i} \leq 1$, for $1\leq  p \leq  r^{(s)}_{i}$ and $ 0 \leq  n^{(s)}_{p,l} \leq 2$, for $1\leq p \leq    r^{(2s-1)}_{l}$ (see (\ref{projekcijaB})). Since $A_{\omega_{1}}$ commutes with the generating functions, in the $s$-th tensor facor from the right, we have
\begin{equation*} \cdots \otimes x_{n^{(s)}_{r^{(2s-1)}_{l},l}\alpha_{l}}(z_{r_{l}^{(2s-1)},l})\cdots   x_{n^{(s)}_{r^{(2s)}_{l},l}\alpha_{l}}(z_{r^{(2s)}_{l},l})\cdots  x_{n^{(s)}_{1,l}\alpha_{l}}(z_{1,l})\end{equation*}
\begin{equation}\label{TanjaB}x_{n^{(s)}_{r^{(s)}_{1},1}\alpha_{1}}(z_{r_{1}^{(s)},1})\cdots  x_{\alpha_{1}}(z_{1,1})v_{L_{B_l^{(1)}}(\Lambda_{1})}\otimes \cdots.
 \end{equation}

\subsubsection{\textbf{Simple current operator \texorpdfstring{$e_{\omega_{1}}$}{eomega1}}}\label{ss:currB}
In the same way as in \cite{Bu} in the proof of linear independence, we use simple current operators $e_{\omega_{1}}$ on level $1$ standard modules for $B_l^{(1)}$, $l\geq 2$:
\begin{equation*}
e_{\omega_{1}}:L_{B_l^{(1)}}(\Lambda_0)\rightarrow L_{B_l^{(1)}}(\Lambda_1),
\end{equation*}
associated to $\omega_1 \in \mathfrak{h}$,  
which are uniquely characterized by
their action on the highest weight vector 
\begin{equation}\label{nekakoB}
e_{\omega_{1}}{v}_{L_{B_l^{(1)}}(\Lambda_{0})}=v_{L_{B_l^{(1)}}(\Lambda_{1})}
\end{equation}
and by their commutation relations 
\begin{equation}\label{S341B}
x_{\alpha}(z)e_{\omega_{1}}=e_{\omega_{1}}z^{\alpha(\omega_{1})}x_{\alpha}(z),
\end{equation}
for all $\alpha \in R$, or, written by components,
\begin{equation}\label{S342B}
x_{\alpha}(m)e_{\omega_{1}}=e_{\omega_{1}}x_{\alpha}(m+\alpha(\omega_{1})),
\end{equation}
for all $\alpha \in R$ and $m \in \mathbb{Z}$ (cf. \cite{DLM}, \cite{Li2}).
\par Let $s \leq k$. We define the linear bijection
\begin{equation}\label{S657B}
B_s=1\otimes\ldots \otimes 1\otimes e_{\omega_{1}} \otimes \underbrace{1 \otimes \ldots \otimes 1}_{s-1 \ \text{factors}}.
\end{equation}
If we act with this operator (\ref{S657B}) on the vector $v_{L_{B_l^{(1)}}(k\Lambda_0)}=v_{ L_{B_l^{(1)}}(\Lambda_0)}\otimes \cdots \otimes v_{L_{B_l^{(1)}}(\Lambda_0)}$, we get 
\begin{align*} 
B_s(v_{L_{B_l^{(1)}}(k\Lambda_0)})
=v_{ L_{B_l^{(1)}}(\Lambda_0)}\otimes
\cdots \otimes v_{ L_{B_l^{(1)}}(\Lambda_0)}\otimes v_{ L_{B_l^{(1)}}(\Lambda_1)}\otimes \underbrace{v_{ L_{B_l^{(1)}}(\Lambda_0)}\otimes \cdots\otimes v_{ L_{B_l^{(1)}}(\Lambda_0)}}_{s-1 \ \text{factors}}.\end{align*}
Now it follows that in (\ref{TanjaB}) we can commute $B_s$ to the left and obtain
\begin{equation*}
\cdots \otimes x_{n^{(s)}_{r^{(2s)}_{l},l}\alpha_{l}}(z_{r_{l}^{(2s-1)},l})\cdots           x_{n^{(s)}_{r^{(2s)}_{l},l}\alpha_{l}}(z_{r^{(2s)}_{l},l})\cdots  x_{n^{(s)}_{1,l}\alpha_{l}}(z_{1,l})\end{equation*}
\begin{equation*}
\cdots x_{n^{(s)}_{r_{1}^{(k)},1}\alpha_{1}}(z_{r_{1}^{(k)},1})z_{r_{1}^{(k)},1}\cdots x_{\alpha_{1}}(z_{1,1})z_{1,1}v_{ L_{B_l^{(1)}}(\Lambda_0)}\otimes \cdots.\end{equation*}
By taking the corresponding coefficients, we have
\begin{equation*}
A_s\pi_{\mathfrak{R}}bv_{L_{B_l^{(1)}}(k\Lambda_0)}=B_s\pi_{\mathfrak{R}}b^{+}v_{L_{B_l^{(1)}}(k\Lambda_0)}
\end{equation*}
where the monomial $b^{+}$:
\begin{align*}
b^{+}=b^+(\alpha_{l})\cdots b^{+}(\alpha_{1}),
\end{align*}
is such that 
\begin{align*}
b^+(\alpha_{i})&=b(\alpha_{i}), \ \ 2 \leq i \leq l&\\
b^+(\alpha_{1})&=x_{n_{r_{1}^{(1)},1}\alpha_{1}}(m_{r_{1}^{(1)},1}+1)\cdots x_{s\alpha_{1}}(m_{1,1}+1)&\\
&=x_{n_{r_{1}^{(1)},1}\alpha_{1}}(m^{+}_{r_{1}^{(1)},1})\cdots x_{s\alpha_{1}}(m^{+}_{1,1}).&
\end{align*}

\subsubsection{\textbf{Operator \texorpdfstring{$e_{\alpha_{i}}$}{ealphai}}}\label{S66B}
For every simple root $\alpha_i \in \Pi$, $1 \leq i \leq l$, we define on the level $1$ standard module $L_{B_l^{(1)}}(\Lambda_0)$, the ``Weyl group translation'' operator  $e_{\alpha_i}$ by
\begin{equation}\label{weyl}
 e_{\alpha_i}=\exp\  x_{-\alpha_i}(1)\exp\  (- x_{\alpha_i}(-1))\exp\  x_{-\alpha_i}(1) \exp\ x_{\alpha_i}(0)\exp\   (-x_{-\alpha_i}(0))\exp\ x_{\alpha_i}(0),\end{equation}
(cf. \cite{K}). Using (\ref{weyl}) we see that
\begin{lem}\label{S662B} Let $i$, ($1\leq i \leq l-1$) be fixed. For every $i' \neq i, i+1$, we have:
\begin{itemize}
	\item [a)]  $ e_{\alpha_1}v_{_{B_l^{(1)}}(\Lambda_0)}=-x_{\alpha_{i}}(-1)v_{_{B_l^{(1)}}(\Lambda_{0})}$;
\item [b)]  
$x_{\alpha_i}(z)e_{\alpha_i}=z^2e_{\alpha_i}x_{\alpha_i}(z)$;
	\item [c)]  
$x_{\alpha_{i+1}}(z)e_{\alpha_i}=z^{-1}e_{\alpha_i}x_{\alpha_{i+1}}(z);$
\item [d)] 
$ x_{\alpha_{i'}}(z)e_{\alpha_i}=e_{\alpha_i}x_{\alpha_{i'}}(z)$.
\end{itemize}
\end{lem}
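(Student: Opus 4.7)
The plan is to verify the four identities by working directly from the six-exponential definition of $e_{\alpha_i}$, split as $e_{\alpha_i} = s_0 s_1$ where
\begin{equation*}
s_1 := \exp x_{\alpha_i}(0)\exp(-x_{-\alpha_i}(0))\exp x_{\alpha_i}(0), \quad s_0 := \exp x_{-\alpha_i}(1)\exp(-x_{\alpha_i}(-1))\exp x_{-\alpha_i}(1).
\end{equation*}
These are the Kac-style lifts of the finite simple reflection $s_{\alpha_i}$ and of the affine reflection $s_{\delta-\alpha_i}$ in the $\widehat{\mathfrak{sl}}_2$-subalgebra spanned by $x_{\pm\alpha_i}(0)$, $\alpha_i^\vee(0)$, $x_{\alpha_i}(-1)$, $x_{-\alpha_i}(1)$ and $c$; their product $s_0 s_1$ realizes the translation $t_{\alpha_i}$ of the associated affine Weyl group. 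Part (a) is then a direct computation on the vacuum, while parts (b)--(d) follow uniformly from the translation interpretation.

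For part (a), let $v := v_{L_{B_l^{(1)}}(\Lambda_0)}$. One has three annihilation relations: $x_{\alpha_i}(0)v = 0$ (positive root), $x_{-\alpha_i}(1)v = 0$ (positive $d$-degree), and $x_{-\alpha_i}(0)v = 0$ (since $L_{B_l^{(1)}}(\Lambda_0)$ is basic, so its top $\mathfrak{g}$-piece is the trivial representation). Hence the three rightmost exponentials of $e_{\alpha_i}$ and the rightmost $\exp x_{-\alpha_i}(1)$ all fix $v$. The level-one long-root relation $x_{\alpha_i}(z)^2 = 0$ (Proposition on (\ref{al:S15})) yields $x_{\alpha_i}(-1)^2 v = 0$, so $\exp(-x_{\alpha_i}(-1))v = v - x_{\alpha_i}(-1)v$. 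Using the commutator
\begin{equation*}
[x_{-\alpha_i}(1), x_{\alpha_i}(-1)] = -\alpha_i^\vee(0) + c,
\end{equation*}
which acts as the identity on $v$ (as $\alpha_i^\vee(0)v = 0$ and $cv = v$), one obtains $x_{-\alpha_i}(1)x_{\alpha_i}(-1)v = v$ and $x_{-\alpha_i}(1)^2 x_{\alpha_i}(-1)v = 0$. Applying the outermost $\exp x_{-\alpha_i}(1)$ to $v - x_{\alpha_i}(-1)v$ therefore gives $v - \bigl(x_{\alpha_i}(-1)v + v\bigr) = -x_{\alpha_i}(-1)v$, which is (a) (correcting the evident misprint $e_{\alpha_1}$ for $e_{\alpha_i}$).

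For parts (b)--(d), the key computation is the adjoint action of $e_{\alpha_i}$ on $x_\alpha(m)$ for $\alpha \in R$, carried out via Hadamard's lemma. Since $\mathrm{ad}\,x_{\pm\alpha_i}(0)$ and $\mathrm{ad}\,x_{\mp\alpha_i}(\pm 1)$ are locally nilpotent on each root space of $\widehat{\mathfrak{g}}$, each of the six exponentials acts as a finite polynomial transformation. A direct calculation shows that $s_1$ sends $x_\alpha(m)$ to $\pm x_{s_{\alpha_i}\alpha}(m)$ and that $s_0$ sends $x_\alpha(m)$ to $\pm x_{s_{\alpha_i}\alpha}\bigl(m + \langle\alpha, \alpha_i\rangle\bigr)$ (the energy shift coming from reflecting the affine root $\alpha + m\delta$ across $\delta - \alpha_i$). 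Composing, the signs cancel and one obtains the uniform translation formula
\begin{equation*}
e_{\alpha_i}^{-1}\,x_\alpha(m)\,e_{\alpha_i} = x_\alpha\bigl(m + \langle\alpha, \alpha_i\rangle\bigr),
\end{equation*}
equivalent to $x_\alpha(z)e_{\alpha_i} = z^{\langle\alpha, \alpha_i\rangle}\,e_{\alpha_i}\,x_\alpha(z)$. Substituting $\langle\alpha_i, \alpha_i\rangle = 2$ gives (b); $\langle\alpha_{i+1}, \alpha_i\rangle = -1$ (which holds in the $B_l$ Dynkin diagram both for the long-long pair $i \leq l-2$ and for the long-short pair $i = l-1$) gives (c); and $\langle\alpha_{i'}, \alpha_i\rangle = 0$ for simple roots $\alpha_{i'}$ not joined to $\alpha_i$ in the Dynkin diagram gives (d).

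The main obstacle is not conceptual but bookkeeping: verifying that the multiplicative constants appearing in each Hadamard expansion combine to exactly $+1$, so that no extraneous scalar multiplies the $z$-power in (b)--(d). This is settled by invoking the Chevalley normalization of the root vectors $x_{\pm\alpha}$ (making all structure constants integers) together with the standard sign computation from Kac's Chapter 3 for the lifts of simple reflections in the Kac-Moody group, which guarantees that the three-exponential factors $s_0$ and $s_1$ preserve the integral structure on the level-one standard module.
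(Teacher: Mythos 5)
Your proof is correct and follows exactly the route the paper intends: the paper offers no written argument for this lemma beyond ``Using (\ref{weyl}) we see that\ldots'' (citing \cite{K}), and your computation of part a) on the vacuum together with the adjoint-action/translation formula $x_{\alpha}(z)e_{\alpha_i}=z^{\langle\alpha,\alpha_i\rangle}e_{\alpha_i}x_{\alpha}(z)$ for parts b)--d) is precisely the standard verification being invoked. The sign bookkeeping you flag does work out (e.g.\ for b), the $-1$ from the finite reflection on the highest-weight vector of the adjoint $\mathfrak{sl}_2$-string cancels the $-1$ from the affine reflection), and in any case an overall nonzero constant would not affect the linear-independence argument where the lemma is used.
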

\begin{flushright}
$\square$
\end{flushright}
Set 
\begin{equation*}
1\otimes\cdots  \otimes 1\otimes \underbrace{e_{\alpha_{i}} \otimes e_{\alpha_{i}} \otimes \cdots \otimes e_{\alpha_{i}}}_{s \ \text{factors}}
\end{equation*}
where $s \leq k$. 
From Lemma \ref{S662B} a), it now follows
\begin{align*} 
&\left(1\otimes\cdots  \otimes 1\otimes e_{\alpha_{i}} \otimes e_{\alpha_{i}}\otimes  \cdots \otimes e_{\alpha_{i}}\right)v_{L_{B_l^{(1)}}(k\Lambda_0)}&\\
=&(-1)^sv_{L_{B_l^{(1)}}(\Lambda_{0})}\otimes
 \cdots\otimes v_{L_{B_l^{(1)}}(\Lambda_{0})}\otimes&\\
 & \ \ \ \ \ \ \ \ \ \ \ \ \underbrace{x_{\alpha_{i}}(-1)v_{L_{B_l^{(1)}}(\Lambda_{0})}\otimes      x_{\alpha_{i}}(-1)v_{L_{B_l^{(1)}}(\Lambda_{0})}\otimes \cdots \otimes  x_{\alpha_{i}}(-1)v_{L_{B_l^{(1)}}(\Lambda_{0})}}_{s \ \text{factors}}.&
\end{align*}
Let $1\leq i \leq l-1$ be fixed and let $b$ be a monomial   
\begin{align}\label{S6610B}
b&=b(\alpha_{i+1})b(\alpha_{i})x_{s\alpha_{i}}(-s)&\\
\nonumber
&=x_{n_{r^{(1)}_{{i+1}},{i+1}}\alpha_{{i+1}}}(m_{r^{(1)}_{{i+1}},{i+1}})\cdots     x_{n_{1,{i+1}}\alpha_{{i+1}}}(m_{1,{i+1}})&\\
\nonumber
& \ \ \ \ \ \ \ \ \ \ \ \ \ x_{n_{r^{(1)}_{i},i}\alpha_{i}}(m_{r^{(1)}_{i},i})\cdots  x_{n_{2,i}\alpha_{i}}(m_{2,i})x_{s\alpha_{i}}(-s),&
\end{align}
of dual-charge-type 
\begin{equation*}
\mathfrak{R}=\left(r^{(1)}_{i+1},\ldots, r^{(p)}_{i+1} ;r^{(1)}_{i},\ldots, r_i^{(s)},0 \ldots, 0\right),
\end{equation*}
where $p=k$ if $i+1 <l$ or $p=2k$ if $i+1=l$.
\par Assume that $i < l-1$. The situation of $i=l-1$ is similar to the case which is considered in \cite{Bu}. 
Let $\pi_{\mathfrak{R}}$ be the projection of principal subspace $W_{L_{B_l^{(1)}}(\Lambda_{0})}\otimes \cdots\otimes W_{L_{B_l^{(1)}}(\Lambda_{0})}$ on the vector space  
\begin{equation*}
{W_{L_{B_l^{(1)}}(\Lambda_{0})}}_{(r_{i+1}^{(k)};0)}\otimes \cdots \otimes {W_{L_{B_l^{(1)}}(\Lambda_{0})}}_{( r_{i+1}^{(s)}; r_{i}^{(s)})}\otimes \cdots\otimes {W_{L_{B_l^{(1)}}(\Lambda_{0})}}_{( r_{i+1}^{(1)};r_{i}^{(1)})}. \end{equation*}
The projection \begin{equation*}
\pi_{\mathfrak{R}}b\left(v_{L_{B_l^{(1)}}(\Lambda_{0})}\otimes \cdots\otimes v_{L_{B_l^{(1)}}(\Lambda_{0})}\right)\end{equation*}
of the monomial vector $b\left(v_{L_{B_l^{(1)}}(\Lambda_{0})}\otimes \cdots\otimes v_{L_{B_l^{(1)}}(\Lambda_{0})}\right)$ is a coefficient of the generating function
\begin{equation*}\pi_{\mathfrak{R}}x_{n_{r_{{i+1}}^{(1)},i+1}\alpha_{i+1}}(z_{r_{i+1}^{(1)},i+1})\cdots
 x_{n_{1,i+1}\alpha_{i+1}}(z_{1,i+1}) x_{n_{r_{i}^{(1)},i}\alpha_{i}}(z_{r_{i}^{(1)},i}) 
\cdots  x_{n_{2,i}\alpha_{i}}(z_{2,i})\end{equation*}
\begin{equation*}\left(v_{L_{B_l^{(1)}}(\Lambda_{0})}\otimes \cdots\otimes v_{L_{B_l^{(1)}}(\Lambda_{0})}\otimes x_{\alpha_{i}}(-1)v_{L_{B_l^{(1)}}(\Lambda_{0})}\otimes \cdots \otimes  x_{\alpha_{i}}(-1)v_{L_{B_l^{(1)}}(\Lambda_{0})}\right)\end{equation*}
\begin{equation*}=C x_{n^{(k)}_{r^{(k)}_{i+1},i+1}\alpha_{i+1}}(z_{r_{i+1}^{(k)},i+1})\cdots    x_{n^{(k)}_{1,i+1}\alpha_{i+1}}(z_{1,i+1}) v_{L_{B_l^{(1)}}(\Lambda_{0})}\end{equation*}
\begin{equation*}\otimes \cdots \otimes \end{equation*}
\begin{equation*}\otimes x_{n^{(s)}_{r^{(2s-1)}_{i+1},i+1}\alpha_{i+1}}(z_{r_{i+1}^{(2s-1)},i+1}) \cdots 
 x_{n^{(s)}_{r^{(2s)}_{i+1},i+1}\alpha_{i+1}}(z_{r^{(2s)}_{i+1},i+1})\cdots  x_{n^{(s)}_{1,i+1}\alpha_{i+1}}(z_{1,i+1})\end{equation*}
\begin{equation*} x_{n^{(s)}_{r^{(s)}_{i},i}\alpha_{i}}(z_{r_{i}^{(s)},i})\cdots    x_{n^{(s)}_{2,i}\alpha_{i}}(z_{2,i})e_{\alpha_{i}}v_{L_{B_l^{(1)}}(\Lambda_{0})}\end{equation*}
\begin{equation*}\otimes \cdots \otimes \end{equation*}
\begin{equation*}\otimes x_{n_{r^{(1)}_{i+1},i+1}^{(1)}\alpha_{i+1}}(z_{r_{2}^{(1)},i+1})\cdots 
 x_{n_{r^{(2)}_{i+1},i+1}^{(1)}\alpha_{i+1}}(z_{r_{i+1}^{(2)},i+1})\cdots x_{n_{2,i+1}^{(1)}\alpha_{2}}(z_{2,i+1})
x_{n_{1,i+1}^{(1)}\alpha_{i+1}}(z_{1,i+1})\end{equation*}
\begin{equation*}x_{n_{r^{(1)}_{i},i}^{(1)}\alpha_{i}}(z_{r_{i}^{(1)},i})\cdots   x_{n{_{2,i}^{(1)}\alpha_{i}}}(z_{2,i})e_{\alpha_{i}}v_{L_{B_l^{(1)}}(\Lambda_{0})},\end{equation*}
where $C \in \mathbb{C}^*$ (see (\ref{projekcijaB})).  We shift operator $1\otimes\ldots \otimes e_{\alpha_{i}} \otimes e_{\alpha_{i}} \otimes \ldots \otimes        e_{\alpha_{i}}$ all the way to the left using commutation relations b), c) in Lemma \ref{S662B} \begin{equation*}
(1 \otimes \cdots \otimes 1 \otimes e_{\alpha_{i}} \otimes e_{\alpha_{i}} \otimes \cdots \otimes  e_{\alpha_{i}})\pi_{\mathfrak{R}'} b'\left(v_{L_{B_l^{(1)}}(\Lambda_{0})}\otimes \cdots\otimes v_{L_{B_l^{(1)}}(\Lambda_{0})}\right), 
\end{equation*} 
where 
\begin{equation*}
\mathfrak{R}'=\left(r^{(1)}_{i+1},\ldots, r^{(s)}_{i+1};r^{(1)}_{i}-1,\ldots, r_i^{(s)}-1\right)
\end{equation*}
and
\begin{align*}
b'&= b'(\alpha_{i+1})b'(\alpha_{i})&\\
&= \cdots x_{n_{1,i+1}\alpha_{i+1}}(m_{1,i+1}-n^{(1)}_{1,i+1}-\cdots-n^{(s)}_{1,i+1})&\\
& \ \ \ \ \ \ \ \ \ \ \ \ \ \ \ \ \ \ \ \ \ \ \ \ \ \ \  x_{n_{r^{(1)}_{i}\alpha_{i}}}(m_{r^{(1)}_{i},i}+2n_{r^{(1)}_{i}})\cdots   x_{n_{2,i}\alpha_{i}}(m_{2,i}+2n_{2,i})&\\
&= x_{n_{r^{(1)}_{i+1},i+1}\alpha_{i+1}}(m'_{r^{(1)}_{i+1},i+1})\cdots 
 x_{n_{1,i+1}\alpha_{i+1}}(m'_{1,i+1})&\\
 &\ \ \ \ \ \ \ \ \ \ \ \ \ \ \ \ \ \ \ \ \ \ \ \ \ \ \  x_{n_{r^{(1)}_{i}\alpha_{i}}}(m'_{r^{(1)}_{1},i})\cdots x_{n_{2,i}\alpha_{i}}(m'_{2,i}).&
\end{align*}
In the proof of linear independence, we use the following proposition: 
\begin{prop}\label{S66PB}
Let $b$ (\ref{S6610B}) be an element of the set $B_{W_{L_{B_l^{(1)}}(k\Lambda_{0})}}$. Then the monomial $b'$ is an element of the set $B_{W_{L_{B_l^{(1)}}(k\Lambda_{0})}}$. 
\end{prop}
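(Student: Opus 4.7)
The plan is to verify directly the defining inequalities of $B_{W_{L_{B_l^{(1)}}(k\Lambda_0)}}$ for the monomial $b'$, using the explicit shifts coming from Lemma \ref{S662B}(b)--(c). Taking residues of the generating-function relations yields
\[
x_{n\alpha_i}(m)\, e_{\alpha_i} = e_{\alpha_i}\, x_{n\alpha_i}(m+2n), \qquad x_{n\alpha_{i+1}}(m)\, e_{\alpha_i} = e_{\alpha_i}\, x_{n\alpha_{i+1}}(m-n).
\]
Hence, commuting the $s$ copies of $e_{\alpha_i}$ (placed in the rightmost $s$ tensor slots) past $b(\alpha_{i+1})b(\alpha_i)$ shifts the energy of the color-$i$ quasi-particle of charge $n_{p,i}$ by $+2\sum_{t=1}^{s} n^{(t)}_{p,i}=+2n_{p,i}$ (the equality uses $n_{p,i}\le n_{1,i}=s$) and the energy of the color-$(i+1)$ quasi-particle of charge $n_{p,i+1}$ by $-\sum_{t=1}^{s} n^{(t)}_{p,i+1}=-\min(s,n_{p,i+1})$; all other colors are untouched, and the factor $x_{s\alpha_i}(-s)$ is removed.

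Next, I would check the inequalities of (\ref{SkupLB}) one color at a time. The charge bounds $n_{p,i'}\le k$ (resp.\ $\le 2k$ for $i'=l$) are preserved since $b'$ only deletes a charge. The same-charge difference condition $m'_{p+1,j}\le m'_{p,j}-2n_{p,j}$ under $n_{p+1,j}=n_{p,j}$ for $j\in\{i,i+1\}$ follows immediately from the corresponding condition for $b$, because the shift depends only on the (equal) charge, so that $m'_{p+1,j}-m'_{p,j}=m_{p+1,j}-m_{p,j}$. For the main energy upper bound at color $i$ and new position $p\in\{2,\ldots,r^{(1)}_i\}$, removing the factor $x_{s\alpha_i}(-s)$ drops the $p'=1$ term $2\min(n_{p,i},s)=2n_{p,i}$ from $\sum_{p>p'>0}2\min(n_{p,i},n_{p',i})$, slackening the bound by $2n_{p,i}$, which is precisely the upward shift in $m'_{p,i}$. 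Analogously, in the bound for color $i+1$ at position $p$, deleting $n_{1,i}=s$ from the sum $\sum_{q=1}^{r^{(1)}_i}\min(n_{q,i},n_{p,i+1})$ tightens the bound by $\min(s,n_{p,i+1})$, which matches the downward shift in $m'_{p,i+1}$.

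The main obstacle is book-keeping: the color-$i$ indices in $b'$ run over $p=2,\ldots,r^{(1)}_i$, the color-$i$ dual-charge-type becomes $(r^{(1)}_i-1,\ldots,r^{(s)}_i-1)$, and one has to verify term-by-term that the changes in the quadratic correction $\sum_{p>p'>0}2\min\{n_{p,i},n_{p',i}\}$ and in the linear interaction $\sum_{q}\min\{n_{q,i},n_{p,i+1}\}$ on the right-hand sides of the inequalities are matched exactly by the energy shifts produced by $e_{\alpha_i}^{\otimes s}$. These cancellations are clean thanks to the maximality $n_{1,i}=s$, which forces $\min(s,n_{p,i})=n_{p,i}$ and thereby aligns the deleted summand with the induced shift. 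No further vertex-operator input beyond Lemma \ref{S662B} is required.
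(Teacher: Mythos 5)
Your proof is correct and follows essentially the same route as the paper's: a direct verification of the defining inequalities of $B_{W_{L_{B_l^{(1)}}(k\Lambda_{0})}}$, matching the energy shifts $+2n_{p,i}$ and $-\min(s,n_{p,i+1})$ produced by the $s$ copies of $e_{\alpha_i}$ against the terms dropped from the right-hand sides of the bounds; the paper merely splits your unified $\min(s,n_{p,i+1})$ observation into the two cases $n_{p,i+1}\le s$ and $n_{p,i+1}>s$. Note only that, like the paper, your computation of the color-$(i+1)$ shift tacitly assumes $i<l-1$ (for $i=l-1$ the two-per-factor placement of color-$l$ quasi-particles changes the count, a case the paper defers to \cite{Bu}).
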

\begin{proof}
The proposition follows by considering the possible situation for $n_{p,i}$, $2\leq p \leq r^{(1)}_{i}$, and $n_{p,i+1}$, $ 1\leq p \leq r^{(1)}_{i+1}$, from which it follows that $m_{p,i}$ comply the defining conditions of the set $B_{W_{L_{B_l^{(1)}}(k\Lambda_{0})}}$. As before we will assume that $i<l-1$, since for the $i=l-1$ the argument is similar as in the case of affine Lie algebra $B_2^{(1)}$ (see \cite{Bu}).
\begin{enumerate}
\item For $n_{p,i}=\bar{s}\leq s$, we have
\begin{align*} 
m'_{p,i}&=m_{p,i}+2\bar{s}\\
&\leq -\bar{s}- 2 (p-1)\bar{s} +2\bar{s}\\
&=-\bar{s}- 2 (p-2)\bar{s}
\end{align*}
and 
\begin{align*}
m'_{p+1,i}&=m_{p+1,i}+2\bar{s}\\
& \leq  -2\bar{s}+m_{p,i}+2\bar{s}\\
&= m'_{p,i} -2\bar{s} 
\ \ \ \ \ \ \ \text{for} \ \ \ n_{p+1,i}=n_{p,i}.
\end{align*}

\item For $n_{p,i+1}\leq s$, we have
\begin{align*} 
m'_{p,i+1}&=m_{p,i+1}-n_{p,i+1}\\
&\leq -n_{p,i+1} - \sum_{p>p'>0} 2 \ \text{min} \left\{n_{p,i+1},n_{p',i+1}\right\}+  
\sum^{r^{(1)}_{i}}_{q=1}\text{min} \left\{n_{p,i+1},n_{q,i}\right\}-n_{p,i+1}\\
 \\
&=-n_{p,i+1} - \sum_{p>p'>0} 2 \ \text{min} \left\{n_{p,i+1},n_{p',i+1}\right\}+  
\sum^{r^{(1)}_{i}}_{q=2}\text{min} \left\{n_{p,i+1},n_{q,i}\right\} 
\end{align*}
and 
\begin{align*}
m'_{p+1,i+1}&=m_{p+1,i+1}-n_{p,i+1}\\
& \leq  m_{p,i+1}-2n_{p,i+1}-n_{p,i+1}\\
&= m'_{p,i+1} -2n_{p,i+1} 
\ \ \ \ \ \ \ \text{for} \ \ \ n_{p+1,i+1}=n_{p,i+1}.
\end{align*}

\item For $n_{p,i+1}>s$, we have:
\begin{align*}
m'_{p,i+1}&= m_{p,i+1}-s\\
 &\leq  -n_{p,i+1}  
 - \sum_{p>p'>0} 2 \ \text{min} \left\{n_{p,i+1},n_{p',i+1}\right\}+ 
 \sum^{r^{(1)}_{i}}_{q=1}\text{ min} \left\{n_{p,i+1},n_{q,i}\right\}-s\\
 &=  -n_{p,i+1}
- \sum_{p>p'>0} 2 \ \text{min} \left\{n_{p,i+1},n_{p',i+1}\right\}+ 
\sum^{r^{(1)}_{ i}}_{q=2}\text{ min} \left\{n_{p,i+1},n_{q,i}\right\}
\end{align*}
and
\begin{align*}
m'_{p+1,i+1} &= m_{p+1,i+1} -s \\
&\leq  m_{p,i+1} -2n_{p,i+1}-s\\
&= m'_{p,i+1} -2n_{p,i+1}
 \ \ \ \ \ \text{for} \ \ \ n_{p+1,i+1}=n_{p,i+1}.
\end{align*}
\end{enumerate}
\end{proof}

\subsubsection{\textbf{The proof of linear independence}}\label{S67B}
By Proposition \ref{prop:S22B} the set $\mathfrak{B}_{W_{L_{B_l^{(1)}}(k\Lambda_{0})}}$ of monomial vectors $bv_{L_{B_l^{(1)}}(k\Lambda_{0})}$ spans $W_{L_{B_l^{(1)}}(k\Lambda_{0})}$. We prove linear independence of this set by induction on $l$ and charge-type $\mathfrak{R}'$ of monomials $b \in B_{W_{L_{B_l^{(1)}}(k\Lambda_{0})}}$. Linear independence for the case $l=2$ is proved in \cite{Bu}.
\begin{ax} The idea of proof is that for the \enquote{minimum} quasi-monomial vector of dual-charge-type $\mathfrak{R}$ in a given subset of  $\mathfrak{B}_{W_{L_{B_l^{(1)}}(k\Lambda_{0})}}$, we define the projection $\pi_{\mathfrak{R}}$, which \enquote{kills} all monomial vectors higher in the linear lexicographic ordering \enquote{$<$} (see Remark \ref{S62LB}).
\end{ax} 
\par We fix $1< i \leq l$ and the dual-charge-type 
\begin{equation}\label{malidualtype}
\mathfrak{R}=\left(r_{l}^{(1)}, \ldots, r_{l}^{(2k)}; r_{l-1}^{(1)}, \ldots, r_{l-1}^{(k)}; \ldots ;r_{i}^{(1)}, \ldots, r_{i}^{(k)}\right) ,
\end{equation}
\begin{equation*}
r_{l}^{(1)}\geq \ldots \geq r_{l}^{(2k)},
\end{equation*}
\begin{equation*}
r_{l-1}^{(1)}\geq \ldots \geq r_{l-1}^{(k)},
\end{equation*}
\begin{equation*} 
\cdots
\end{equation*}
\begin{equation*}
r_{i}^{(1)}\geq \ldots \geq r_{i}^{(k)}.
\end{equation*} 
Denote by $\mathfrak{A}_{\mathfrak{R}} \subset \mathfrak{B}_{W_{L_{B_{l}^{(1)}}(k\Lambda_{0})}}$ the set of monomial vectors $bv_{L_{B_{l}^{(1)}}(k\Lambda_0)}$, where monomials $b$ are of dual-charge-type (\ref{malidualtype}) and the corresponding charge-type
\begin{equation*}
\mathfrak{R}'=\left(n_{r_{l}^{(1)},l}, \ldots, n_{1,l};n_{r_{l-1}^{(1)},l-1}, \ldots, n_{1,l-1};\ldots ; n_{r_{i}^{(1)},i}, \ldots, n_{1,i}\right) ,
\end{equation*}
\begin{equation*} 
n_{r_{i}^{(1)},l}\leq \ldots \leq n_{1,l}\leq 2k, 
\end{equation*}
\begin{equation*} 
n_{r_{l-1}^{(1)},l-1}\leq \ldots \leq n_{1,l-1}\leq k,
\end{equation*}
\begin{equation*} 
\cdots
\end{equation*}
\begin{equation*} 
n_{r_{i}^{(1)},i}\leq \ldots \leq n_{1,i}\leq k.
\end{equation*}
Note, that monomials $b \in B_{W_{L_{B_{l}^{(1)}}(k\Lambda_{0})}}$   
\begin{align}\label{novoBldanas}
&b= b(\alpha_l)b(\alpha_{l-1})\cdots b(\alpha_{i})=&\\
\nonumber
&=x_{n_{r_{l}^{(1)},l}\alpha_{l}}(m_{r_{l}^{(1)},l})\cdots  x_{n_{1,l}\alpha_{l}}(m_{1,l})&\\
\nonumber
&\ \ \ \ \ \ \ \ \ \ \ \ \ \ \ \ \ \ \ \ \ \ \ \ x_{n_{r_{l-1}^{(1)},l-1}\alpha_{l-1}}(m_{r_{l-1}^{(1)},l-1})\cdots  x_{n_{1,l-1}\alpha_{l-1}}(m_{1,l-1})\cdots &\\
\nonumber
&\ \ \ \ \ \ \ \ \ \ \ \ \ \ \ \ \ \ \ \ \ \ \ \ \ \ \ \ \ \ \ \ \ \ \ \ \ \cdots x_{n_{r_{i}^{(1)},i}\alpha_{i}}(m_{r_{i}^{(1)},i})\cdots  x_{n_{1,i}\alpha_{i}}(m_{1,i}),&
\end{align} 
of the charge-type $\mathfrak{R}'$ and dual-charge type $\mathfrak{R}$ 
can be realised as elements of the principal subspace in the case of the affine Lie algebra of type $B_{l-i+1}^{(1)}$. 
\par Under consideration at the subsection \ref{ss:projB}, the default dual-charge-type $\mathfrak{R}$ determines the projection $\pi_{\mathfrak{R}}$ on the vector space
\begin{equation*}
{W_{L_{B_l^{(1)}}(\Lambda_{0})}}_{(\mu^{(k)}_{l};r_{l-1}^{(k)};\ldots; r_{i}^{(k)} )}\otimes \cdots \otimes {W_{L_{B_l^{(1)}}(\Lambda_{0})}}_{(\mu^{(1)}_{l};r_{l-1}^{(1)}; \ldots; r_{i}^{(1)} )} \subset W_{L_{B_l^{(1)}}(\Lambda_{0})}\otimes \cdots \otimes  W_{L_{B_l^{(1)}}(\Lambda_{0})}.
\end{equation*}
Since the restriction of $B_l^{(1)}$-module $L(\Lambda_0)$ on the subalgebra $B_{l-i+1}^{(1)}$ is a direct sum of the level one $B_{l-i+1}^{(1)}$-modules $L_{B_{l-i+1}^{(1)}}(\Lambda_0)$, with a highest weight vector $v_{L_{B_{l}^{(1)}}(\Lambda_0)}=v_{L_{B_{l-i+1}^{(1)}}(\Lambda_0)}$, it follows that
\begin{equation}\label{nekakoBl}
\pi_{\mathfrak{R}}bv_{L_{B_{l}^{(1)}}(k\Lambda_0)} \in {W_{L_{B_{l-i+1}^{(1)}}(\Lambda_{0})}}\otimes \cdots \otimes {W_{L_{B_{l-i+1}^{(1)}}(\Lambda_{0})}} \subset W_{L_{B_l^{(1)}}(\Lambda_{0})}\otimes \cdots \otimes  W_{L_{B_l^{(1)}}(\Lambda_{0})},
\end{equation}
where $W_{L_{B_{l-i+1}^{(1)}}(\Lambda_{0})}={W_{L(\Lambda_{0})}}_{0\alpha_1+\cdots +0\alpha_{i-1}}$ is a principal subspace of standard $B_{l-i+1}^{(1)}$-module $L_{B_{l-i+1}^{(1)}}(\Lambda_0)\subset L_{B_l^{(1)}}(\Lambda_{0})$. 

\par On (\ref{nekakoBl}) we can act with operators 
$A_{n_{1,i}}$, $B_{n_{1,i}}$ and $e_{\alpha_{i}}$ defined for vertex operator algebra $L_{B_{l-i+1}^{(1)}}(\Lambda_0)$, whose properties are described in subsections \ref{ss:intertB}, \ref{ss:currB} and \ref{S66B}. 
With these operators we \enquote{move} monomial vectors $\pi_{\mathfrak{R}}bv_{L_{B_{l}^{(1)}}(k\Lambda_0)}$ from one space to another until we get vectors of the form $\pi_{\mathfrak{R}}b(\alpha_l)b(\alpha_{l-1})v_{L_{B_{l}^{(1)}}(k\Lambda_0)} \in \pi_{\mathfrak{R}}\mathfrak{A}$. In \cite{Bu} has been proven that the set $\pi_{\mathfrak{R}}\mathfrak{A}$ of vectors $\pi_{\mathfrak{R}}b(\alpha_l)b(\alpha_{l-1})v_{L_{B_{l}^{(1)}}(k\Lambda_0)}$ is a linearly independent set.  
\par By using the previous observations, we can prove:
\begin{thm}\label{THMB}
The set $\mathfrak{B}_{W_{L_{B_{l}^{(1)}}(k\Lambda_{0})}}$ forms a basis for the principal subspace $W_{L_{B_{l}^{(1)}}(k\Lambda_{0})}\subset L_{B_{l}^{(1)}}(k\Lambda_0)$.
\end{thm}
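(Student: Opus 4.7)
The plan is to complement Proposition \ref{prop:S22B} by proving linear independence of $\mathfrak{B}_{W_{L_{B_{l}^{(1)}}(k\Lambda_{0})}}$ via a double induction: the outer induction is on the rank $l$, with base case $l=2$ established in \cite{Bu}, and the inner induction is on the charge-type $\mathfrak{R}'$ under the lexicographic order of (\ref{eq:S23}). Starting from a hypothetical relation $\sum_b c_b \, b v_{L_{B_l^{(1)}}(k\Lambda_0)}=0$, I pick the minimal charge-type $\mathfrak{R}'$ appearing with nonzero coefficient, let $\mathfrak{R}$ be its conjugate dual-charge-type, and apply the projection $\pi_{\mathfrak{R}}$ of Subsection \ref{ss:projB}. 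By Remark \ref{S62LB}, $\pi_{\mathfrak{R}}$ annihilates every monomial of charge-type strictly greater than $\mathfrak{R}'$, so only the monomials of charge-type exactly $\mathfrak{R}'$ survive, landing inside the tensor-product weight space pictured in Figures \ref{slika1B}--\ref{slika2B}.

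Next I would peel off the color-$1$ quasi-particles. With $s=n_{1,1}$, the identity $A_s\pi_{\mathfrak{R}}bv_{L_{B_l^{(1)}}(k\Lambda_0)}=B_s\pi_{\mathfrak{R}}b^{+}v_{L_{B_l^{(1)}}(k\Lambda_0)}$ derived in Subsection \ref{ss:currB} converts the rightmost color-$1$ factor to $x_{s\alpha_1}(-s)$ by a controlled energy shift, and the Weyl translation operator of Subsection \ref{S66B}, applied to the first $s$ tensor factors, then strips that factor while adjusting the color-$2$ energies as bookkept in Proposition \ref{S66PB}. The cited proposition guarantees that the new monomial $b'$ still lies in $B_{W_{L_{B_l^{(1)}}(k\Lambda_{0})}}$, so the inner inductive hypothesis on charge-type applies. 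Iterating this procedure until no color-$1$ factor remains and invoking the embedding (\ref{nekakoBl}) recasts the residual relation inside a principal subspace for $B_{l-1}^{(1)}$, where the outer induction takes over; the iteration ultimately reaches vectors $\pi_{\mathfrak{R}}b(\alpha_l)b(\alpha_{l-1})v_{L_{B_l^{(1)}}(k\Lambda_0)}$, whose linear independence is the $B_2^{(1)}$ case proved in \cite{Bu} (and, for monomials that do not involve color $l$, Georgiev's $A_{l-1}^{(1)}$-result \cite{G}).

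The main obstacle is to check, at each elimination step, that the shifted energies continue to satisfy the defining inequalities of $B_{W_{L_{B_l^{(1)}}(k\Lambda_{0})}}$ listed in Proposition \ref{prop:S22B}. The delicate point is the interface between the short root $\alpha_l$ and the adjacent long root $\alpha_{l-1}$: here the bound on $m_{p,l}$ involves $\min\{2n_{q,l-1},n_{p,l}\}$ rather than $\min\{n_{q,l-1},n_{p,l}\}$, reflecting the two-row splitting of color-$l$ quasi-particles across level-one tensor factors forced by the relation $x_{3\alpha_l}(z)=0$ (Figure \ref{slika2B}). This asymmetry mandates a parity case analysis on $n_{p,l}$ parallel to the three-case computation in the proof of Proposition \ref{S66PB}, which I expect to be the most technical, though not conceptually new, portion of the argument.
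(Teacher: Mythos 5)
Your proposal follows essentially the same route as the paper's proof: project via $\pi_{\mathfrak{R}}$ determined by the minimal monomial to kill larger charge-types (Remark \ref{S62LB}), then repeatedly apply the intertwining-operator coefficient $A_s$, the simple current $B_s$, and the Weyl translation $e_{\alpha_i}$ to strip quasi-particles color by color, with Proposition \ref{S66PB} ensuring the reduced monomials stay in the spanning set, until only colors $l$ and $l-1$ remain and the $B_2^{(1)}$ result of \cite{Bu} applies. The double induction on $l$ and on charge-type, and the identification of the short-root/long-root interface as the technical crux, match the paper's argument.
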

\begin{proof}
Assume that we have
\begin{equation}\label{S6752Bl}
\sum_{a \in A}
c_{a}b_av_{L_{B_{l}^{(1)}}(k\Lambda_{0})}=0, 
\end{equation}
where $A$ is a finite non-empty set and 
\begin{align*}
b_a\in  B_{W_{L_{B_{l}^{(1)}}(k\Lambda_{0})}}.
\end{align*}
Assume that all $b_a$ are the same color-type $(r_{l}, \ldots,r_{1})$. Let $b$ be the smallest monomial in the linear lexicographic ordering \enquote{$<$}
\begin{align*} 
b&=b(\alpha_{l})\cdots b(\alpha_{2})b(\alpha_{1})&\\
&=x_{n_{r_{l}^{(1)},l}\alpha_{l}}(m_{r_{l}^{(1)},l})\cdots x_{n_{1,l}\alpha_{l}}(m_{1,l})\cdots x_{n_{r_{2}^{(1)},2}\alpha_{2}}(m_{r_{2}^{(1)},2})\cdots x_{n_{1,2}\alpha_{2}}(-m_{1,2})&\\
& \ \ \ \ \ \ \ \ \ \ \ \ \ \ \ \ \ x_{n_{r_{1}^{(1)},1}\alpha_{1}}(m_{r_{1}^{(1)},1})\cdots x_{n_{1,1}\alpha_{1}}(-j),
\end{align*}
of dual-charge-type 
\begin{equation*}
\mathfrak{R}=\left( r_{l}^{(1)},\ldots , r_{l}^{(2k)};\ldots;  r_{2}^{(1)},\ldots , r_{2}^{(k)};  r_{1}^{(1)},\ldots , r_{1}^{(n_{1,1})}\right),
\end{equation*}
and charge-type  
\begin{equation}\label{S675444Bl}
\mathfrak{R}'=\left(n_{r_{l}^{(1)},l}, \ldots , 
 n_{1,l}; \ldots ; n_{r_{2}^{(1)},2}, \ldots ,n_{1,2}; n_{r_{1}^{(1)},1}, \ldots ,n_{1,1}\right),
\end{equation}
such that $c_{a}\neq 0$. Then for every other monomial in (\ref{S6752Bl}) we have
\begin{equation*}m_{1,1}\geq -j.
\end{equation*}
Dual-charge-type $\mathfrak{R}$ determines projection
$\pi_{\mathfrak{R}}$ of $\underbrace{W_{L_{B_{l}^{(1)}}(\Lambda_{0})}\otimes \ldots\otimes W_{L_{B_{l}^{(1)}}(\Lambda_{0})}}_{k \ \text{factors}}$ on the vector space 
\begin{align*}
&{W_{L_{B_{l}^{(1)}}(\Lambda_{0})}}_{(\mu^{(k)}_{l};\ldots; r_{2}^{(k)}; 0)}\otimes \ldots  \otimes {W_{L_{B_{l}^{(1)}}(\Lambda_{0})}}_{(\mu^{(n_{1,1}+1)}_{l};\ldots; r_{2}^{(n_{1,1}+1)}; 0)}\otimes&\\ 
& \otimes {W_{L_{B_{l}^{(1)}}(\Lambda_{0})}}_{(\mu^{(n_{1,1})}_{l};\ldots; r_{2}^{(n_{1,1})};r_{1}^{(n_{1,1})})}\otimes  \cdots {W_{L_{B_{l}^{(1)}}(\Lambda_{0})}}_{(\mu^{(1)}_{l};\ldots;r_{2}^{(1)};r_{1}^{(1)})},&
\end{align*}
where 
\begin{equation*}
\mu^{(t)}_{l}=r^{(2t)}_{l}+ r^{(2t-1)}_{l}.
\end{equation*}
By Remark \ref{S62LB}, $\pi_{\mathfrak{R}}$ maps to zero all monomial vectors $b_av_{L_{B_{l}^{(1)}}(k\Lambda_{0})}$ such that $b_a$ has a larger charge-type in the linear lexicographic ordering \enquote{$<$} than (\ref{S675444Bl}). So, in (\ref{S6733Bl}) 
\begin{equation}\label{S6733Bl}
\sum_{a} c_a\pi_{\mathfrak{R}}b_{a}v_{L_{B_{l}^{(1)}}(k\Lambda_{0})}=0, 
\end{equation}
we have a projection of $b_{a}v_{L_{B_{l}^{(1)}}(k\Lambda_{0})}$, where $b_{a}$ are of charge-type (\ref{S675444Bl}). 
On (\ref{S6733Bl}), we act with
\begin{equation*}
A_{n_{1,1}}=1\otimes\ldots \otimes A_{\omega_{1}} \otimes \underbrace{1 \otimes \ldots \otimes 1}_{n_{1,1}-1 \ \text{factors}},
\end{equation*}
then, from \ref{ss:intertB} and \ref{ss:currB} follows
\begin{align*}
A_{n_{1,1}}\left(\sum_{a\in A}
c_{a}\pi_{\mathfrak{R}}b_av_{L_{B_{l}^{(1)}}(k\Lambda_{0})}\right)
=e_{n_{1,1}}\left(\sum_{a\in A}
c_{a}\pi_{\mathfrak{R}}b_a^{+}v_{L_{B_{l}^{(1)}}(k\Lambda_{0})}\right),
\end{align*} 
where 
\begin{equation*}
e_{n_{1,1}}=1\otimes\ldots \otimes
e_{\omega_{1}} \otimes \underbrace{1 \otimes \ldots \otimes 1}_{n_{1,1}-1 \ \text{factors}}. 
\end{equation*}
After leaving out the invertible operator $e_{n_{1,1}}$, we get 
\begin{equation*} 
\sum_{a} c_{a}\pi_{\mathfrak{R}}b_{a}^{+}v_{L_{B_{l}^{(1)}}(k\Lambda_{0})}=0,
\end{equation*}
where $b_{{a}}^{+} \in \mathfrak{A}_{\mathfrak{R}}\subset B_{W_{L_{B_{l}^{(1)}}(k\Lambda_{0})}}$ are the same charge-type as $b_{a}$ in (\ref{S6752Bl}). We act with $A_{n_{1,1}}$ and $e_{n_{1,1}}$ until $j$ becomes $-n_{1,1}$. Assume that after $n_{1,1,}-j$ steps we got
\begin{equation*}
\sum_{a}
c_{a}\pi_{\mathfrak{R}}b_{a}(\alpha_{l})\cdots b_{a}(\alpha_{l})\cdots b_{a}^{+}(\alpha_{1})x_{n_{1,1}\alpha_{1}}(-n_{1,1})v_{L_{B_{l}^{(1)}}(k\Lambda_{0})}=0,
\end{equation*}
where monomial $b_{a}^{+}(\alpha_{1})x_{n_{1,1}\alpha_{1}}(-n_{1,1})$ is of color $i=1$ and  
\begin{equation*}b_{\mathfrak{R}}(\alpha_{l})\cdots b_{\mathfrak{R}}^{+}(\alpha_{1})x_{n_{1,1}\alpha_{1}}(-n_{1,1})v_{L_{B_{l}^{(1)}}(k\Lambda_{0})} \in \mathfrak{A}_{\mathfrak{R}}.\end{equation*} 
Now, from the subsection \ref{S66B} follows 
\begin{align*}
&\pi_{\mathfrak{R}}b(\alpha_{l})\cdots b(\alpha_{2})b^{+}(\alpha_{1})x_{n_{1,1}\alpha_{1}}(-n_{1,1})v_{L_{B_{l}^{(1)}}(k\Lambda_{0})}&\\
&=(1\otimes\cdots 1\otimes  e_{\alpha_{1}}\otimes e_{\alpha_{1}} \cdots \otimes   e_{\alpha_{1}})b'(\alpha_{2})b'(\alpha_{1})v_{L_{B_{l}^{(1)}}(k\Lambda_{0})},&
\end{align*} where $b(\alpha_{l})\cdots b'(\alpha_{2})b'(\alpha_{1})$ does not have a quasi-particle of charge $n_{1,1}$.  
Monomial $b(\alpha_{l})\cdots b'(\alpha_{2})b'(\alpha_{1})$ is of dual-charge-type
\begin{equation*}
\mathfrak{R}^{-}=\left( r_{l}^{(1)}, \ldots ,r_{l}^{(2k)};\ldots; r_{2}^{(1)}, \ldots ,r_{2}^{(k)}; r_{1}^{(1)}-1, \ldots , r_{1}^{(n_{1,1})}-1\right),
 \end{equation*}
and charge-type
\begin{equation*}
\left(n_{r_{l}^{(1)},l}, \ldots , n_{1,l};\ldots; n_{r_{2}^{(1)},2}, \ldots , n_{1,2}; n_{r_{1}^{(1)},1}, \ldots 
,n_{2,1}\right),\end{equation*}
such that\begin{align*}
&\left(n_{r_{l}^{(1)},l}, \ldots , n_{1,l};\ldots; n_{r_{2}^{(1)},2}, \ldots , n_{1,2}; n_{r_{1}^{(1)},1}, \ldots 
,n_{2,1}\right) <&\\
&< \left(n_{r_{l}^{(1)},l}, \ldots , n_{1,l};\ldots; n_{r_{2}^{(1)},2}, \ldots , n_{1,2}; n_{r_{1}^{(1)},1}, \ldots 
,n_{2,1},n_{1,1}\right).&
\end{align*} 
From Proposition \ref{S66PB}, it follows that with the described process, we get elements from the set $\mathfrak{B}_{W_{L_{B_{l}^{(1)}}(k\Lambda_0)}}$. We continue with the described algorithm, until we get monomial \enquote{colored} only with colors $i=l$ and $i=l-1$. Thus, under the consideration at the beginning of this subsection, it follows $c_{a}=0$.
\end{proof}

\subsection{Characters of the principal subspace \texorpdfstring{$W_{L_{B_{l}^{(1)}}(k\Lambda_{0})}$}{WLBl(1)(kLambda0)}}
We use the following expressions (\ref{S710B}),(\ref{S713B}), (\ref{S711B}), and (\ref{S712B}) to determine the character of $W_{L_{B_{l}^{(1)}}(k\Lambda_{0})}$. These expressions can be easy proved by using induction on the level $k \in \mathbb{N}$ of the standard module $L_{B_{l}^{(1)}}(k\Lambda_0)$.
\begin{lem}\label{S7L1B}
For the given color-type $(r_{l},r_{l-1}, \ldots , r_{2}, r_{1})$, charge-type      
\begin{equation*}\left(n_{r_{l}^{(1)},l}, \ldots, n_{1,l};n_{r_{l-1}^{(1)},l-1}, \ldots, n_{1,l-1};\ldots;n_{r_{2}^{(1)},2}, \ldots, n_{1,2}; n_{r_{1}^{(1)},1}, \ldots , n_{1,1}\right)\end{equation*} and dual-charge-type   
\begin{equation*}
\left(r_{l}^{(1)},r_{l}^{(2)},\ldots ,r_{l}^{(2k)};r_{l-1}^{(1)},r_{l-1}^{(2)},\ldots ,r_{l-1}^{(k)}; \ldots;r_{2}^{(1)}, r_{2}^{(2)}, \ldots ,r_{2}^{(k)}; r_{1}^{(1)}, r_{1}^{(2)}, \ldots ,r_{1}^{(k)}\right), \end{equation*}
we have:
\begin{align}\label{S710B}
\sum_{p=1}^{r^{(1)}_l}\sum_{q=1}^{r^{(1)}_{l-1}}\mathrm{min}\{n_{p,l},2n_{q,l-1}\}&=\sum_{s=1}^{k}r^{(s)}_{l-1}(r_l^{(2s-1)}+r_l^{(2s)}),&\\
\label{S713B}
\sum_{p=1}^{r^{(1)}_i}\sum_{q=1}^{r^{(1)}_{i-1}}\mathrm{min}\{n_{p,i},n_{q,i-1}\}&=\sum_{s=1}^{k}r^{(s)}_{i}r_{i-1}^{(s)}, \ \ 2\leq i \leq l-1,&\\
\label{S711B}\sum_{p=1}^{r_{i}^{(1)}} (\sum_{p>p'>0}2\mathrm{min} \{ n_{p,i},
n_{p',i}\}+n_{p,i})&= \sum_{s=1}^{k}r^{(s)^{2}}_{i}, \ \ 2\leq i \leq l-1,&\\
\label{S712B}\sum_{p=1}^{r_{l}^{(1)}} (\sum_{p>p'>0}2\mathrm{min} \{ n_{p,l},
n_{p',l}\}+n_{p,l})&= \sum_{s=1}^{2k}r^{(s)^{2}}_{l}.&
\end{align}
\end{lem}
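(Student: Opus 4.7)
The plan is to deduce all four identities from a single box-counting principle for conjugate partitions. For a weakly decreasing sequence $\mu = (\mu_1, \mu_2, \ldots)$ of nonnegative integers with conjugate $\mu'$, one has the standard incidence $\mu_p \geq s \Leftrightarrow p \leq \mu'_s$. Applied to two partitions $\mu, \nu$ this yields
\begin{equation*}
\sum_{p,q} \min\{\mu_p, \nu_q\} \;=\; \sum_{p,q} \#\{s \geq 1 : \mu_p \geq s \text{ and } \nu_q \geq s\} \;=\; \sum_{s\geq 1} \mu'_s\,\nu'_s,
\end{equation*}
after swapping the order of summation. All four claimed identities are consequences of this principle, since $(n_{r_i^{(1)},i},\ldots,n_{1,i})$ and $(r_i^{(1)},\ldots,r_i^{(s_i)})$ are conjugate partitions of $r_i$ by definition.

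First I would handle (\ref{S713B}): set $\mu = (n_{1,i},\ldots,n_{r_i^{(1)},i})$ and $\nu = (n_{1,i-1},\ldots,n_{r_{i-1}^{(1)},i-1})$. Their conjugates have length $k$ (because charges of colors $1,\ldots,l-1$ are bounded by $k$), and the box-counting identity gives the right-hand side immediately. For (\ref{S711B}) and (\ref{S712B}), take $\mu=\nu$ equal to the color-$i$ charge-type and rewrite the left-hand side as
\begin{equation*}
\sum_{p=1}^{r_i^{(1)}}\Bigl(\sum_{p>p'>0}2\min\{n_{p,i},n_{p',i}\} + n_{p,i}\Bigr) = \sum_{p\neq p'}\min\{n_{p,i},n_{p',i}\} + \sum_p n_{p,i} = \sum_{p,p'}\min\{n_{p,i},n_{p',i}\}.
\end{equation*}
Applying the box-counting principle then gives $\sum_s (r_i^{(s)})^2$, where the upper limit of $s$ is $k$ for $2 \leq i \leq l-1$ and $2k$ for $i=l$, matching (\ref{S711B}) and (\ref{S712B}) respectively.

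The only nontrivial point is (\ref{S710B}), where the factor $2$ in front of $n_{q,l-1}$ breaks the symmetry between long and short roots. I would handle this by writing
\begin{equation*}
\min\{n_{p,l},2n_{q,l-1}\} = \#\{s : 1\leq s \leq n_{p,l} \text{ and } s \leq 2n_{q,l-1}\},
\end{equation*}
and observing that $s \leq 2n_{q,l-1}$ is equivalent to $n_{q,l-1}\geq \lceil s/2\rceil$, i.e.\ $q \leq r_{l-1}^{(\lceil s/2\rceil)}$. Summing over $p$ and $q$ and then over $s=1,\ldots,2k$ yields
\begin{equation*}
\sum_{p,q}\min\{n_{p,l},2n_{q,l-1}\} = \sum_{s=1}^{2k} r_l^{(s)}\, r_{l-1}^{(\lceil s/2\rceil)},
\end{equation*}
and pairing consecutive odd/even indices $s=2t-1,2t$ (which share the value $\lceil s/2\rceil=t$) collects these into $\sum_{t=1}^{k} r_{l-1}^{(t)}\bigl(r_l^{(2t-1)}+r_l^{(2t)}\bigr)$, as required.

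The main (and only real) obstacle is the asymmetric index in (\ref{S710B}); otherwise the arguments are straightforward swaps of summation order. Alternatively, induction on $k$ works as suggested in the paper: the base case $k=1$ forces every nonzero charge to be $1$ (respectively $1$ or $2$ for color $l$) so both sides reduce to elementary counts, and the inductive step amounts to appending one more column to each conjugate Young diagram and comparing the incremental contributions to both sides, which matches directly.
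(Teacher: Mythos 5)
Your proof is correct, and it takes a different (and more explicit) route than the paper: the paper merely asserts that the four identities ``can be easy proved by using induction on the level $k$'' and gives no details, whereas you give a complete direct argument from the incidence $n_{p,i}\geq s \Leftrightarrow p\leq r_i^{(s)}$ that defines conjugate partitions. Your reduction of the left-hand sides of (\ref{S711B}) and (\ref{S712B}) to the full double sum $\sum_{p,p'}\min\{n_{p,i},n_{p',i}\}$ is exactly right, and your treatment of the only asymmetric case (\ref{S710B}) --- rewriting $s\leq 2n_{q,l-1}$ as $q\leq r_{l-1}^{(\lceil s/2\rceil)}$ and then pairing $s=2t-1,2t$ --- is the one genuinely nontrivial step, and it checks out. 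What your approach buys is a uniform, verifiable one-line mechanism (double counting of lattice boxes common to two Young diagrams) covering all four identities at once, rather than four separate inductions on $k$ whose inductive steps (stripping the top row of each diagram, i.e.\ the parts exceeding $k-1$, resp.\ $2k-2$) would each have to be written out; the induction the paper alludes to is workable but no shorter. The only point worth making explicit in a write-up is that the upper limits $k$ versus $2k$ in the right-hand sides come from the bounds $n_{p,i}\leq k$ for $1\leq i\leq l-1$ and $n_{p,l}\leq 2k$, so that all conjugate parts $r_i^{(s)}$ with $s$ beyond those limits vanish --- which you do note.
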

\begin{flushright}
$\square$
\end{flushright}
We also need the combinatorial identity 
\begin{align}\label{S7KB}
\frac{1}{(q)_r}=\sum_{j\geq 0}p_r(j)q^j,
\end{align}
where \begin{equation*}\frac{1}{(q)_r}=\frac{1}{(1-q)(1-q^2)\cdots (1-q^r)},\end{equation*}
$r>0$ and $p_r(j)$ is the number of partition of $j$ with most $r$ parts (cf. \cite{A}).
\par Now, from the definition of the set $\mathfrak{B}_{W_{L_{B_{l}^{(1)}}(k\Lambda_0)}}$ and (\ref{S710B}), (\ref{S713B}), (\ref{S711B}), (\ref{S712B}), (\ref{S7KB}) follows the character formula:
\begin{thm}\label{uvodBl1}
\begin{align}\nonumber 
&\mathrm{ch} \  W_{L_{B_{l}^{(1)}}(k\Lambda_{0})}&\\
\nonumber
= \sum_{\substack{r^{(1)}_{1}\geq \ldots \geq r^{(k)}_{1}\geq 0}}
&\frac{q^{r^{(1)^{2}}_{1}+\cdots +r^{(k)^{2}}_{1}}}{(q)_{r^{(1)}_{1}-r^{(2)}_{1}}\cdots (q)_{r^{(k)}_{1}}}y^{r_1}_{1}&\\
\nonumber
 \sum_{\substack{r^{(1)}_{2}\geq \ldots \geq r^{(k)}_{2}\geq 0}}
&\frac{q^{r^{(1)^{2}}_{2}+\cdots +r^{(k)^{2}}_{2}-r_1^{(1)}r_2^{(1)}-\cdots -r_1^{(k)}r_2^{(k)}}}{(q)_{r^{(1)}_{2}-r^{(2)}_{2}}\cdots (q)_{r^{(k)}_{2}}}y^{r_2}_{2}&\\
\nonumber
\cdots\\
\nonumber
 \sum_{\substack{r^{(1)}_{l-1}\geq \ldots \geq r^{(k)}_{l-1}\geq 0}}
&\frac{q^{r^{(1)^{2}}_{l-1}+\cdots +r^{(k)^{2}}_{l-1}-r_{l-2}^{(1)}r_{l-1}^{(1)}-\cdots -r_{l-2}^{(k)}r_{l-1}^{(k)}}}{(q)_{r^{(1)}_{l-1}-r^{(2)}_{l-1}}\cdots (q)_{r^{(k)}_{l-1}}}y^{r_{l-1}}_{l-1}&\\
\nonumber
\sum_{r^{(1)}_{l}\geq \ldots \geq r^{(2k)}_{l}\geq  0}&\frac{q^{r^{(1)^{2}}_{l}+\cdots +r^{(2k)^{2}}_{l}-r_{l-1}^{(1)}(r_{l}^{(1)}+r_{l}^{(2)})
-\cdots -r_{l-1}^{(k)}(r_{l}^{(2k-1)}+r_{l}^{(2k)})}}{(q)_{r^{(1)}_{l}-r^{(2)}_{l}}\cdots (q)_{r^{(2k)}_{l}}}
y^{r_l}_{l}.&\\
\nonumber
&& \ \ \ \ \ \ \ \ \ \ \ \ \ \ \ \ \ \ \ \ \ \ \ \ \ \ \square
\end{align}
\end{thm}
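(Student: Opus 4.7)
The plan is to derive the character formula directly from the basis $\mathfrak{B}_{W_{L_{B_l^{(1)}}(k\Lambda_0)}}$ established in Theorem \ref{THMB}. For each basis monomial $bv_{L_{B_l^{(1)}}(k\Lambda_0)}$, the $y$-degree is the color-type $(r_l, \ldots, r_1)$ and the exponent of $q$ is the negative of the total energy $\sum_{i,p} m_{p,i}$. I would organise the sum according to dual-charge-type $\mathfrak{R}$; since the color-type depends only on $\mathfrak{R}$ via $r_i=\sum_s r_i^{(s)}$, the factor $y_1^{r_1}\cdots y_l^{r_l}$ can be pulled out of each inner sum, and it remains to compute $\sum q^{-m}$ over all basis vectors of the given dual-charge-type.

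For fixed charge-type $\mathfrak{R}'$ (equivalently fixed $\mathfrak{R}$), the defining inequalities of $B_{W_{L_{B_l^{(1)}}(k\Lambda_0)}}$ in Proposition \ref{prop:S22B} are saturated by a unique energy-minimum monomial, whose energies are
\[ m_{p,i}^{\min}=-n_{p,i}+\sum_{q=1}^{r_{i-1}^{(1)}}\min\{n_{q,i-1},n_{p,i}\}-\sum_{p>p'>0}2\min\{n_{p,i},n_{p',i}\} \]
for $1\leq i\leq l-1$, together with the analogous formula for $i=l$ in which $n_{q,i-1}$ is replaced by $2n_{q,l-1}$. Summing these energies across all indices and applying the four identities of Lemma \ref{S7L1B} to rewrite each block of terms in dual-charge-type variables, the total minimum energy equals
\[ -m_{\min}=\sum_{i=1}^{l-1}\sum_{s=1}^{k}(r_i^{(s)})^2+\sum_{s=1}^{2k}(r_l^{(s)})^2-\sum_{i=2}^{l-1}\sum_{s=1}^{k}r_{i-1}^{(s)}r_i^{(s)}-\sum_{s=1}^{k}r_{l-1}^{(s)}(r_l^{(2s-1)}+r_l^{(2s)}), \]
which is exactly the quadratic exponent that appears in the stated formula.

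Finally, I would parametrise the remaining basis vectors of the fixed charge-type by the non-negative shifts $k_{p,i}:=m_{p,i}^{\min}-m_{p,i}\geq 0$. The condition $m_{p+1,i}\leq m_{p,i}-2n_{p,i}$ whenever $n_{p+1,i}=n_{p,i}$ translates into $k_{p,i}\leq k_{p+1,i}$ inside each block of same-charge quasi-particles, so the shifts within each such block form an arbitrary partition. A block for color $i<l$ at charge level $s$ contains $r_i^{(s)}-r_i^{(s+1)}$ indices (with $r_i^{(k+1)}:=0$), and the analogous statement for color $l$ uses the range $1\leq s\leq 2k$ with $r_l^{(2k+1)}:=0$. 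By the standard identity \eqref{S7KB}, each block contributes a factor $1/(q)_{r_i^{(s)}-r_i^{(s+1)}}$, and these factors assemble into $1/((q)_{r_i^{(1)}-r_i^{(2)}}\cdots(q)_{r_i^{(k)}})$ for $i<l$ and $1/((q)_{r_l^{(1)}-r_l^{(2)}}\cdots(q)_{r_l^{(2k)}})$ for $i=l$. Summing $q^{-m_{\min}}$ times the product of these partition generating functions times $y_1^{r_1}\cdots y_l^{r_l}$ over all admissible dual-charge-types yields the claimed formula. The main technical point, and the only place where care is needed, is verifying that the shift parametrisation is a bijection onto the set of admissible energies: one must check that every choice of block-wise weakly-ordered non-negative $k_{p,i}$'s produces a monomial whose energies still satisfy all the inequalities of Proposition \ref{prop:S22B}. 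This is a straightforward case analysis, separating colors $1\leq i\leq l-1$ from color $l$, once the minimum-energy configuration has been correctly identified.
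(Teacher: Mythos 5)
Your proposal is correct and follows essentially the same route as the paper, whose proof of Theorem \ref{uvodBl1} consists precisely of combining the basis $\mathfrak{B}_{W_{L_{B_{l}^{(1)}}(k\Lambda_{0})}}$ from Theorem \ref{THMB} with the identities (\ref{S710B})--(\ref{S712B}) of Lemma \ref{S7L1B} (giving the quadratic exponent from the saturated difference conditions) and the partition identity (\ref{S7KB}) (giving the factors $1/(q)_{r_i^{(s)}-r_i^{(s+1)}}$ from the block-wise shift parametrisation). Your write-up simply makes explicit the shift bijection that the paper leaves implicit; the only cosmetic issue is the label $m^{\min}_{p,i}$ for what is the maximal admissible value of $m_{p,i}$ (corresponding to minimal energy $-m_{p,i}$).
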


\subsection{The basis of the \texorpdfstring{$W_{N_{B_l^{(1)}}(k\Lambda_{0})}$}{BWNBl(1)(kLambda0)}}
Using the relations among quasi-particles of the same and different colors, using the proof of the theorem \ref{THMB} with the same arguments as in \cite{Bu}, we can prove:
\begin{thm}\label{prop:S22BN} The set $\mathfrak{B}_{W_{N_{B_l^{(1)}}(k{\Lambda}_{0})}}=\left\{bv_{N_{B_l^{(1)}}(k\Lambda_{0})}:b \in B_{W_{N_{B_l^{(1)}}\left(k\Lambda_{0}\right)}}\right\}$, where
\begin{equation}\label{SkupNB}
B_{W_{N_{B_l^{(1)}}\left(k\Lambda_{0}\right)}}= \bigcup_{\substack{n_{r_{1}^{(1)},1}\leq \ldots \leq n_{1,1}\\\substack{ \ldots \\n_{r_{l-1}^{(1)},l-1}\leq \ldots \leq n_{1,l-1} }\\n_{r_{l}^{(1)},l}\leq \ldots \leq n_{1,l} }}\left(\text{or, equivalently,} \ \ \ 
\bigcup_{\substack{r_{1}^{(1)}\geq \cdots \geq 0\\\substack{\cdots \\r_{l-1}^{(1)}\geq \cdots \geq 
0}\\r_{l}^{(1)}\geq \cdots \geq 
0}}\right)
\end{equation}
\begin{equation*}\left\{b\right.= b(\alpha_{l})\cdots b(\alpha_{1})
=x_{n_{r_{l}^{(1)},l}\alpha_{l}}(m_{r_{l}^{(1)},l})\cdots x_{n_{1,l}\alpha_{l}}(m_{1,l})\cdots x_{n_{r_{1}^{(1)},1}\alpha_{1}}(m_{r_{1}^{(1)},1})\cdots  x_{n_{1,1}\alpha_{1}}(m_{1,1}):\end{equation*}
\begin{align}\nonumber
\left|
\begin{array}{l}
m_{p,i}\leq  -n_{p,i}+ \sum_{q=1}^{r_{i-1}^{(1)}}\text{min}\left\{n_{q,i-1},n_{p,i}\right\}- \sum_{p>p'>0} 
2 \ \text{min}\{n_{p,i}, n_{p',i}\},\\
\ \ \ \ \ \ \ \ \ \ \ \ \ \ \ \ \ \ \ \ \ \ \ \ \ \ \ \ \ \ \ \ \ \ \ \ \ \ \ \ \ \ \ \ \ \ \ \ \ \ \ \ \ \ \ \ \ \ \ \ \ \ \ \ \ \ 1\leq p \leq r_i^{(1)}, \ 1 \leq i \leq l-1; \\
m_{p+1,i} \leq   m_{p,i}-2n_{p,i} \  \text{if} \ n_{p+1,i}=n_{p,i}, \ 1\leq  p\leq r_{i}^{(1)}-1, \ 1 \leq i \leq l-1;\\
m_{p,l}\leq  -n_{p,l} + \sum_{q=1}^{r_{l-1}^{(1)}}\text{min}\left\{ 2n_{q,l-1},n_{p,l }\right\} - \sum_{p>p'>0} 2 \ \text{min}\{n_{p,l}, n_{p',l}\}, \  1\leq  p\leq r_{l}^{(1)};\\
m_{p+1,l}\leq  m_{p,l}-2n_{p,l} \  \text{if} \ n_{p,l}=n_{p+1,l}, \  1\leq  p\leq r_{l}^{(1)}-1
\end{array}\right\},
\end{align}
where $r_0^{(1)}=0$, is the base of the principal subspace $W_{N_{B_l^{(1)}}\left(k\Lambda_{0}\right)}$.
\end{thm}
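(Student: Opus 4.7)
The plan is to adapt the two-step strategy of Theorem~\ref{THMB}. For the spanning statement, I would repeat verbatim the induction on charge-type and total energy that proves Proposition~\ref{prop:S22B}, using the cross-color relations of Lemmas~\ref{lem:S321B} and~\ref{lem:S323B} together with the same-color relations of Lemma~\ref{lem:S21} and Corollary~\ref{cor:S21}. The only difference with the standard module case is that the vanishing relations~\eqref{al:S15}--\eqref{al:S16} are unavailable on $N_{B_l^{(1)}}(k\Lambda_0)$, so no upper bound on the charges $n_{p,i}$ is imposed; the remaining defining inequalities of $B_{W_{N_{B_l^{(1)}}(k\Lambda_0)}}$ agree with those of $B_{W_{L_{B_l^{(1)}}(k\Lambda_0)}}$.

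For linear independence I would reduce to the already proven Theorem~\ref{THMB} by passing to a standard module of sufficiently high level. Suppose
\begin{equation*}
\sum_{a\in A} c_a\, b_a\, v_{N_{B_l^{(1)}}(k\Lambda_0)}=0,\qquad b_a\in B_{W_{N_{B_l^{(1)}}(k\Lambda_0)}},
\end{equation*}
is a finite nontrivial relation, and let $N$ be the maximum charge occurring in any $b_a$. Choose any $k'\ge N$; then every $b_a$ lies in $B_{W_{L_{B_l^{(1)}}(k'\Lambda_0)}}$, since the remaining defining inequalities of the two spanning sets coincide. The projection $\pi_{\mathfrak{R}}$ together with the operators $A_s$, $B_s$ and $e_{\alpha_i}$ of subsections~\ref{ss:projB}--\ref{S66B} are built from level-one vertex-operator-algebra data and therefore extend unchanged to $L_{B_l^{(1)}}(\Lambda_0)^{\otimes k'}$; the commutation properties recorded in Lemma~\ref{S662B} and in~\eqref{S342B} hold at every level. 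Mimicking verbatim the inductive algorithm of Subsection~\ref{S67B} at level $k'$ and for the same monomials $b_a$ then forces $c_a=0$.

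The main obstacle will be to justify that such a passage from $W_{N_{B_l^{(1)}}(k\Lambda_0)}$ to $W_{L_{B_l^{(1)}}(k'\Lambda_0)}$ does transport the original relation into a genuine relation at level $k'$. The natural vehicle is the PBW identification $W_{N_{B_l^{(1)}}(k\Lambda_0)}\cong U(\mathcal{L}(\mathfrak{n}_+)_{<0})$ composed with the action map $U(\mathcal{L}(\mathfrak{n}_+)_{<0})\to W_{L_{B_l^{(1)}}(k'\Lambda_0)}$, $u\mapsto u\,v_{L_{B_l^{(1)}}(k'\Lambda_0)}$. Expanding a quasi-particle $x_{n\alpha}(m)\,v_{N_{B_l^{(1)}}(k\Lambda_0)}$ in this PBW basis is in general level-dependent through the central element $c$, but the low-energy inequality $m_{p,i}\le -n_{p,i}+\cdots$ built into $B_{W_{N_{B_l^{(1)}}(k\Lambda_0)}}$ controls the commutators carrying $c$ and confines them to strictly lower terms in the lexicographic ordering of Remark~\ref{S62LB}. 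Hence the \emph{leading} PBW term of each $b_a\,v_{N_{B_l^{(1)}}(k\Lambda_0)}$ is nonzero and independent of $k$, which is exactly what is required for the images of the $b_a$ at level $k'$ to retain the same \enquote{minimum} monomial structure that drove the induction at Subsection~\ref{S67B}. This is the same mechanism developed in~\cite{Bu} for $B_2^{(1)}$; once it is in place, the induction on the number of colors already used for Theorem~\ref{THMB} completes the proof.
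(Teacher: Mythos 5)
Your overall strategy coincides with the paper's: the spanning part is the same induction as for Proposition~\ref{prop:S22B} with the integrability relations \eqref{al:S15}--\eqref{al:S16} dropped (hence no charge bound), and linear independence is obtained by transporting a hypothetical relation to a standard module $L_{B_l^{(1)}}(k'\Lambda_0)$ with $k'$ at least the maximal charge and invoking Theorem~\ref{THMB}. This is exactly what the paper means by ``using the proof of Theorem~\ref{THMB} with the same arguments as in \cite{Bu}''.

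Your final paragraph, however, misidentifies the issue in the transport step and then resolves it with an argument that would not suffice. There is no level-dependence to control: the invariant form vanishes on $\mathfrak{n}_+\times\mathfrak{n}_+$, so the bracket on $\mathcal{L}(\mathfrak{n}_+)$ inside $\widehat{\mathfrak{g}}$ never produces the central element $c$, and the quasi-particle monomials $b_a$ are honest (level-independent) elements of $U(\mathcal{L}(\mathfrak{n}_+))$, not something whose PBW expansion depends on $k$. The clean statement you need is that, by the Poincar\'e--Birkhoff--Witt factorization $U(\mathcal{L}(\mathfrak{n}_+))=U(\mathcal{L}(\mathfrak{n}_+)_{<0})\,U(\mathcal{L}(\mathfrak{n}_+)_{\geq 0})$ and the freeness of $U(\widehat{\mathfrak{g}}_{<0})$ on $v_{N_{B_l^{(1)}}(k\Lambda_0)}$, the kernel of $u\mapsto u\,v_{N_{B_l^{(1)}}(k\Lambda_0)}$ on $U(\mathcal{L}(\mathfrak{n}_+))$ is the left ideal $U(\mathcal{L}(\mathfrak{n}_+))\mathcal{L}(\mathfrak{n}_+)_{\geq 0}$, which annihilates $v_{L_{B_l^{(1)}}(k'\Lambda_0)}$ for every $k'$; hence $\sum_a c_a b_a v_{N_{B_l^{(1)}}(k\Lambda_0)}=0$ implies $\sum_a c_a b_a v_{L_{B_l^{(1)}}(k'\Lambda_0)}=0$ verbatim. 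By contrast, your claim that the ``leading PBW term of each $b_a v_{N_{B_l^{(1)}}(k\Lambda_0)}$ is nonzero and independent of $k$'' is not ``exactly what is required'': even if there were genuine level-dependent lower-order corrections, knowing only that the leading terms transport would not let you conclude that the full relation holds at level $k'$ without an additional triangularity argument. Replace that paragraph with the ideal-kernel observation above and the proof is complete.
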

\begin{flushright}
$\square$
\end{flushright}

\subsection{Characters of the principal subspace \texorpdfstring{$W_{N_{B_l^{(1)}}(k\Lambda_{0})}$}{WNBl(1)(kLambda0)}}
From the above theorem and (\ref{S710B})-(\ref{S7KB}) we can write the character formulas of principal subspace $W_{N_{B_l^{(1)}}(k\Lambda_{0})}$:
\begin{thm}\label{uvodBl2}
\begin{align}\label{characterB1} 
&\mathrm{ch} \  W_{N_{B_l^{(1)}}(k\Lambda_{0})}&\\
\nonumber
&= \sum_{\substack{r^{(1)}_{1}\geq \ldots \geq r^{(u_1)}_{1}\geq 0\\ u_1\geq0 }}
\frac{q^{r^{(1)^{2}}_{1}+\cdots +r^{(u_1)^{2}}_{1}}}{(q)_{r^{(1)}_{1}-r^{(2)}_{1}}\cdots (q)_{r^{(u_1)}_{1}}}y^{r_1}_{1}& 
\end{align}
\begin{equation*} 
 \sum_{\substack{r^{(1)}_{2}\geq \ldots \geq r^{(u_2)}_{2}\geq 0\\ u_2\geq 0}}
\frac{q^{r^{(1)^{2}}_{2}+\cdots +r^{(u_2)^{2}}_{2}-r_1^{(1)}r_2^{(1)}-\cdots -r_1^{(u_2)}r_2^{(u_2)}}}{(q)_{r^{(1)}_{2}-r^{(2)}_{2}}\cdots (q)_{r^{(u_2)}_{2}}}y^{r_2}_{2} 
\end{equation*} 
\begin{equation*}
\ \ \ \ \ \ \ \ \cdots
\end{equation*}
\begin{equation*}
\sum_{\substack{r^{(1)}_{l-1}\geq \ldots \geq r^{(u_{l-1})}_{l-1}\geq 0\\ u_{l-1}\geq0}}
\frac{q^{r^{(1)^{2}}_{l-1}+\cdots +r^{(u_{l-1})^{2}}_{l-1}-r_{l-2}^{(1)}r_{l-1}^{(1)}-\cdots -r_{l-2}^{(u_{l-1})}r_{l-1}^{(u_{l-1})}}}{(q)_{r^{(1)}_{l-1}-r^{(2)}_{l-1}}\cdots (q)_{r^{(u_{l-1})}_{l-1}}}y^{r_{l-1}}_{l-1} 
\end{equation*} 
\begin{equation*} 
\sum_{\substack{r^{(1)}_{l}\geq \ldots \geq r^{(2u_{l})}_{l}\geq  0\\ u_{l}\geq 0}}
\frac{q^{r^{(1)^{2}}_{l}+\cdots +r^{(2u_{l})^{2}}_{l}-r_{l-1}^{(1)}(r_{l}^{(1)}+r_{l}^{(2)})
-\cdots -r_{l-1}^{(u_{l})}(r_{l}^{(2u_{l}-1)}+r_{l}^{(2u_{l})})}}{(q)_{r^{(1)}_{l}-r^{(2)}_{l}}\cdots (q)_{r^{(2u_{l})}_{l}}}
y^{r_l}_{l}.
\end{equation*} 
\end{thm}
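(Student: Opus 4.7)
The plan is to read the character directly off the basis $\mathfrak{B}_{W_{N_{B_l^{(1)}}(k\Lambda_0)}}$ of Theorem \ref{prop:S22BN}. A basis monomial $b$ of color-type $(r_l,\ldots,r_1)$ and total energy $E(b)=-\sum_{i,p}m_{p,i}$ lies in the weight subspace of weight $-E(b)\delta+r_1\alpha_1+\cdots+r_l\alpha_l$, so it contributes $q^{E(b)}y_1^{r_1}\cdots y_l^{r_l}$ to the character. The computation therefore reduces to enumerating, for each fixed dual-charge-type $\mathfrak{R}$, the admissible energy configurations allowed by the defining inequalities of Theorem \ref{prop:S22BN}. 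Since $N_{B_l^{(1)}}(k\Lambda_0)$ carries no vanishing relation of the form $x_\alpha(z)^{k+1}=0$, the number of parts $u_i$ for each color is unbounded, in contrast to the irreducible case.

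For a fixed dual-charge-type $\mathfrak{R}=(r_l^{(1)},\ldots,r_l^{(2u_l)};\ldots;r_1^{(1)},\ldots,r_1^{(u_1)})$, the inequalities of Theorem \ref{prop:S22BN} admit a unique maximal tuple $(m_{p,i}^{\max})$. Summing $-m_{p,i}^{\max}$ over all $(p,i)$ and regrouping via the identities (\ref{S710B})--(\ref{S712B}) of Lemma \ref{S7L1B} gives
\begin{equation*}
-\sum_{i,p}m_{p,i}^{\max}=\sum_{i=1}^{l-1}\sum_{s=1}^{u_i}(r_i^{(s)})^{2}+\sum_{s=1}^{2u_l}(r_l^{(s)})^{2}-\sum_{i=2}^{l-1}\sum_{s=1}^{u_i}r_{i-1}^{(s)}r_i^{(s)}-\sum_{s=1}^{u_l}r_{l-1}^{(s)}\bigl(r_l^{(2s-1)}+r_l^{(2s)}\bigr),
\end{equation*}
which matches exactly the exponent of $q$ appearing in the announced formula.

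Any admissible monomial of the same charge-type is obtained from the maximal one by subtracting, for each color $i$ and each maximal run of equal consecutive charges $n_{p,i}=n$, a partition whose part-count is bounded by the length of that run; the latter equals $r_i^{(s)}-r_i^{(s+1)}$ (with $r_i^{(u_i+1)}=0$) for colors $1\leq i\leq l-1$, and $r_l^{(s)}-r_l^{(s+1)}$ for color $l$. Applying the partition identity (\ref{S7KB}) block by block yields the denominators $(q)_{r_i^{(1)}-r_i^{(2)}}\cdots(q)_{r_i^{(u_i)}}$ and $(q)_{r_l^{(1)}-r_l^{(2)}}\cdots(q)_{r_l^{(2u_l)}}$. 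Summing over $u_1,\ldots,u_l\geq 0$ and over the decreasing sequences of dual-charges then produces (\ref{characterB1}). The only delicate step is the bookkeeping in the second paragraph: substituting the upper bounds for $m_{p,i}$ from Theorem \ref{prop:S22BN} and carefully reorganizing the sums over $p$ and over blocks of equal charges to generate the bilinear cross-color expressions $r_{i-1}^{(s)}r_i^{(s)}$ and $r_{l-1}^{(s)}(r_l^{(2s-1)}+r_l^{(2s)})$; I expect this accounting to be the principal obstacle, but it is handled routinely by the identities (\ref{S710B})--(\ref{S713B}) of Lemma \ref{S7L1B}.
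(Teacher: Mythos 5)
Your proposal is correct and follows essentially the same route as the paper: the character is read off the quasi-particle basis of Theorem \ref{prop:S22BN}, the minimal energy for a fixed dual-charge-type is computed and regrouped via the identities (\ref{S710B})--(\ref{S712B}) of Lemma \ref{S7L1B} to produce the quadratic and bilinear exponents, and the residual freedom in the energies within blocks of equal charge is counted by (\ref{S7KB}), giving the $(q)_{r_i^{(s)}-r_i^{(s+1)}}$ denominators. In fact you supply more of the bookkeeping (the explicit maximal tuple and the block-by-block partition count) than the paper, which simply cites the basis theorem and Lemma \ref{S7L1B}.
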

\begin{flushright}
$\square$
\end{flushright}
We can determine the character of principal subspace $W_{N_{B_l^{(1)}}(k{\Lambda}_{0})}$ using the Po\-incar\'{e}-Birk\-hoff-Witt theorem, since we have
\begin{equation*}
W_{N_{B_l^{(1)}}(k{\Lambda}_{0})}\cong U(\mathcal{L}(\mathfrak{n}_+)_{<0}).
\end{equation*}
Set $\left\{x_{\alpha}(m): \alpha \in R_+, m <0\right\}$ a basis of the Lie algebra $\mathcal{L}(\mathfrak{n}_+)_{<0}$ with a total order on this set: 
\begin{equation*}x(m)\leq y(m')\Leftrightarrow  x< y \quad \text{or} \quad x=y \quad \text{and} \quad m< m'.
\end{equation*}
Now, we can write a basis of $U(\mathcal{L}(\mathfrak{n}_+)_{<0})$:
\begin{align}\label{102B}
&x_{\alpha_1}(m^1_1)\cdots  x_{\alpha_1}(m^{s_1}_1)x_{\alpha_1+\alpha_2}(m^1_2)\cdots x_{\alpha_1+\alpha_2}(m^{s_2}_2)\cdots x_{\alpha_1+\alpha_2+\cdots + \alpha_l}(m^1_l)\cdots&\\
\nonumber
& \cdots x_{\alpha_1+\alpha_2+\cdots + \alpha_l}(m^{s_l}_l)x_{\alpha_1+2\alpha_2+\cdots +2\alpha_l}(m^1_{l+1})\cdots   x_{\alpha_1+2\alpha_2+\cdots +2\alpha_l}(m^{s_{l+1}}_{l+1})\cdots &\\
\nonumber
& \cdots x_{\alpha_1+\alpha_2+\cdots +2\alpha_l}(m^1_{2l-1})\cdots   x_{\alpha_1+\alpha_2+\cdots +2\alpha_l}(m^{s_{2l-1}}_{2l-1})\cdots &\\
\nonumber
&\cdots x_{ \alpha_{l-1}}(m^1_{l^2-3})\cdots x_{ \alpha_{l-1}}(m^{s_{l^2-3}}_{l^2-3})\cdots x_{ \alpha_{l-1}+2\alpha_l}(m^1_{l^2-1})\cdots x_{ \alpha_{l-1}+2\alpha_l}(m^{s_{l^2-1}}_{l^2-1})&\\
\nonumber
& x_{ \alpha_l}(m^1_{l^2})\cdots x_{ \alpha_l}(m^{s_{l^2}}_{l^2}),&
\end{align} with $m_i^1\leq \cdots \leq m_i^{s_i}$, $s_i \in \mathbb{N}$ for $i=1,\ldots,l^2$. \\ 
It follows that the subspace $U(\mathcal{L}(\mathfrak{n}_+)_{<0})_{(m,r_1,\ldots, r_l)}$ has basis (\ref{102B}), where  
\begin{equation*}(m,r_1,\ldots, r_{l-1}, r_l)=(\sum_{i=1}^{l^2}\sum_{j=1}^{s_i}m^j_i, s_1+s_2+\cdots + s_{2l-1},\ldots,  s_{l-1}+\cdots +s_{l^2-1},s_l+2s_{l+1}+\cdots +s_{l^2}).\end{equation*}
The  bijection map 
\begin{align}\nonumber
U(\mathcal{L}(\mathfrak{n}_+)_{<0})&\rightarrow  W_{N_{B_l^{(1)}}(k{\Lambda}_{0})}\\
\nonumber
b&\mapsto  bv_{N_{B_l^{(1)}}(k{\Lambda}_{0})}
\end{align}
maps weighted subspace $U(\mathcal{L}(\mathfrak{n}_+)_{<0})_{(m,r_1,\ldots, r_n)}$ on ${W_{N_{B_l^{(1)}}(k{\Lambda}_{0})}}_{(m,r_1,\ldots,r_l)}$. Thus, we also have
\begin{equation}\label{KN3B} 
\textrm{ch} \ W^B_{N(k\Lambda_{0})}=\prod_{m > 0}\frac{1}{(1-q^my_1)}\frac{1}{(1-q^my_1y_2)}\cdots \frac{1}{(1-q^my_1\cdots y_l)}
\end{equation} 
\begin{equation*} 
\frac{1}{(1-q^my_1y_2^2\cdots y_l^2)}\cdots \frac{1}{(1-q^my_1y_2\cdots y_l^2)}
\end{equation*} 
\begin{equation*} 
\frac{1}{(1-q^my_2)}\frac{1}{(1-q^my_2y_3)}\cdots \frac{1}{(1-q^my_2\cdots y_l)}\frac{1}{(1-q^my_2y_3^2\cdots y_l^2)}\cdots \frac{1}{(1-q^my_2y_3\cdots y_l^2)} 
\end{equation*} 
\begin{equation*}
\ \ \ \ \ \ \ \ \cdots  
\end{equation*} 
\begin{equation*}
\frac{1}{(1-q^{l-1})}\frac{1}{(1-q^my_{l-1}y_l)} \frac{1}{(1-q^my_{l-1}y_l^2)} \frac{1}{(1-q^my_l)} 
\end{equation*} 
Now from (\ref{characterB1}) and (\ref{KN3B}) follows a new identity of Rogers-Ramanujan's type:
\begin{thm}
\begin{equation*} 
\prod_{m > 0}\frac{1}{(1-q^my_1)}\frac{1}{(1-q^my_1y_2)}\cdots \frac{1}{(1-q^my_1\cdots y_l)}\frac{1}{(1-q^my_1y_2^2\cdots y_l^2)}\cdots \frac{1}{(1-q^my_1y_2\cdots y_l^2)} 
\end{equation*} 
\begin{equation*} 
\frac{1}{(1-q^my_2)}\frac{1}{(1-q^my_2y_3)}\cdots \frac{1}{(1-q^my_2\cdots y_l)}\frac{1}{(1-q^my_2y_3^2\cdots y_l^2)}\cdots \frac{1}{(1-q^my_2y_3\cdots y_l^2)} 
\end{equation*} 
\begin{equation*}
\ \ \ \ \ \ \ \ \cdots  
\end{equation*} 
\begin{equation*}
\frac{1}{(1-q^{l-1})}\frac{1}{(1-q^my_{l-1}y_l)} \frac{1}{(1-q^my_{l-1}y_l^2)} \frac{1}{(1-q^my_l)} 
\end{equation*} 
\begin{equation*} 
= \sum_{\substack{r^{(1)}_{1}\geq \ldots \geq r^{(u_1)}_{1}\geq 0\\ u_1\geq0 }}
\frac{q^{r^{(1)^{2}}_{1}+\cdots +r^{(u_1)^{2}}_{1}}}{(q)_{r^{(1)}_{1}-r^{(2)}_{1}}\cdots (q)_{r^{(u_1)}_{1}}}y^{r_1}_{1} 
\end{equation*} 
\begin{equation*} 
 \sum_{\substack{r^{(1)}_{2}\geq \ldots \geq r^{(u_2)}_{2}\geq 0\\ u_2\geq 0}}
\frac{q^{r^{(1)^{2}}_{2}+\cdots +r^{(u_2)^{2}}_{2}-r_1^{(1)}r_2^{(1)}-\cdots -r_1^{(u_2)}r_2^{(u_2)}}}{(q)_{r^{(1)}_{2}-r^{(2)}_{2}}\cdots (q)_{r^{(u_2)}_{2}}}y^{r_2}_{2} 
\end{equation*} 
\begin{equation*}
\ \ \ \ \ \ \ \ \cdots 
\end{equation*} 
\begin{equation*}
\sum_{\substack{r^{(1)}_{l-1}\geq \ldots \geq r^{(u_{l-1})}_{l-1}\geq 0\\ u_{l-1}\geq0}}
\frac{q^{r^{(1)^{2}}_{l-1}+\cdots +r^{(u_{l-1})^{2}}_{l-1}-r_{l-2}^{(1)}r_{l-1}^{(1)}-\cdots -r_{l-2}^{(u_{l-1})}r_{l-1}^{(u_{l-1})}}}{(q)_{r^{(1)}_{l-1}-r^{(2)}_{l-1}}\cdots (q)_{r^{(u_{l-1})}_{l-1}}}y^{r_{l-1}}_{l-1}
\end{equation*} 
\begin{equation*}
\sum_{\substack{r^{(1)}_{l}\geq \ldots \geq r^{(2u_{l})}_{l}\geq  0\\ u_{l}\geq 0}}
\frac{q^{r^{(1)^{2}}_{l}+\cdots +r^{(2u_{l})^{2}}_{l}-r_{l-1}^{(1)}(r_{l}^{(1)}+r_{l}^{(2)})
-\cdots -r_{l-1}^{(u_{l})}(r_{l}^{(2u_{l}-1)}+r_{l}^{(2u_{l})})}}{(q)_{r^{(1)}_{l}-r^{(2)}_{l}}\cdots (q)_{r^{(2u_{l})}_{l}}}
y^{r_l}_{l}.\end{equation*} 
\end{thm}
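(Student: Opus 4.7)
The plan is to obtain the identity by computing the character $\mathrm{ch} \ W_{N_{B_l^{(1)}}(k\Lambda_{0})}$ in two different ways and equating the results. The sum side is precisely the character formula already established in Theorem \ref{uvodBl2} from the quasi-particle basis $\mathfrak{B}_{W_{N_{B_l^{(1)}}(k\Lambda_{0})}}$ of Theorem \ref{prop:S22BN}, whereas the product side will come from a direct Poincar\'e-Birkhoff-Witt computation applied to the isomorphism $W_{N_{B_l^{(1)}}(k\Lambda_0)} \cong U(\mathcal{L}(\mathfrak{n}_+)_{<0})$ of graded vector spaces (both graded by total energy and by the simple-root components). Thus the two paragraphs following Theorem \ref{uvodBl2} already set up both sides; the final theorem is the equality of these two expressions.

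For the product side, I would invoke the PBW monomial basis of $U(\mathcal{L}(\mathfrak{n}_+)_{<0})$ as in (\ref{102B}): each positive root $\alpha \in R_+$ yields a commuting family $\{x_\alpha(-m)\}_{m \geq 1}$ of free PBW generators and therefore contributes a factor $\prod_{m \geq 1} (1 - q^m \mathbf{y}^\alpha)^{-1}$, where $\mathbf{y}^\alpha = y_1^{c_1}\cdots y_l^{c_l}$ when $\alpha = \sum_i c_i \alpha_i$. I would then enumerate the positive roots of $B_l$ in terms of simple roots, namely $\epsilon_i - \epsilon_j = \alpha_i + \cdots + \alpha_{j-1}$ for $i<j$, $\epsilon_i = \alpha_i + \cdots + \alpha_l$, and $\epsilon_i + \epsilon_j = \alpha_i + \cdots + \alpha_{j-1} + 2\alpha_j + \cdots + 2\alpha_l$ for $i<j$, grouped by the smallest index $i$. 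Each such group produces precisely one of the consecutive lines of reciprocals appearing in the product on the left-hand side of the identity, matching (\ref{KN3B}) verbatim.

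For the sum side, I would import the character formula from Theorem \ref{uvodBl2} with no further work. Its derivation proceeds by summing $q^{m} y_1^{r_1}\cdots y_l^{r_l}$ over the basis $B_{W_{N_{B_l^{(1)}}(k\Lambda_{0})}}$: for each fixed dual-charge-type $\mathfrak{R}$ the difference inequalities on the energies $m_{p,i}$ translate, via the identities (\ref{S710B})--(\ref{S712B}) of Lemma \ref{S7L1B}, into a fixed quadratic contribution $\sum_i (r_i^{(1)^2} + \cdots)$ minus cross terms $r_{i-1}^{(s)} r_i^{(s)}$ (with the doubling $r_{l-1}^{(s)}(r_l^{(2s-1)}+r_l^{(2s)})$ at the short root), while the remaining freedom in spacing the $m_{p,i}$ is enumerated by the partition-counting identity (\ref{S7KB}), giving the reciprocals $(q)_{r_i^{(s)} - r_i^{(s+1)}}^{-1}$.

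The main obstacle is purely bookkeeping rather than conceptual. One must verify that the PBW enumeration of positive roots of $B_l$ produces exactly the sequence of reciprocals displayed on the left-hand side of the identity, with the correct $y$-exponents for both the long roots $\epsilon_i \pm \epsilon_j$ and the short roots $\epsilon_i$, and correctly handling the $i=l$ row where only $1/(1-q^my_l)$ appears. Once this matching is confirmed, the final identity is immediate: the product expression (\ref{KN3B}) and the sum of Theorem \ref{uvodBl2} both equal $\mathrm{ch}\ W_{N_{B_l^{(1)}}(k\Lambda_{0})}$, hence they are equal to each other.
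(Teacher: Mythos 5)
Your proposal is correct and follows essentially the same route as the paper: the sum side is the character formula of Theorem \ref{uvodBl2} obtained from the quasi-particle basis, the product side is the character (\ref{KN3B}) obtained from the Poincar\'e--Birkhoff--Witt basis (\ref{102B}) of $U(\mathcal{L}(\mathfrak{n}_+)_{<0})$ enumerated over the positive roots of $B_l$, and the identity is the equality of these two expressions for $\mathrm{ch}\ W_{N_{B_l^{(1)}}(k\Lambda_0)}$. Your enumeration of the positive roots in terms of simple roots matches the grouping of factors in the product exactly as in the paper.
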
\begin{flushright}
$\square$
\end{flushright}

\section{The case  \texorpdfstring{$C_\MakeLowercase{l}^{(1)}$}{Cl{(1)}}}\label{Cl4}
\subsection{Principal subspaces for affine Lie algebra of type \texorpdfstring{$C_\MakeLowercase{l}^{(1)}$}{Cl{(1)}}}
Let $\mathfrak{g}$ be of the type $C_l$, $l \geq 3$. We have the following base of the root system $R$: 
$$\Pi =\left\{\alpha_1=\frac{1}{\sqrt{2}}\left(\epsilon_1-\epsilon_2\right), \ldots,\alpha_{l-1}=\frac{1}{\sqrt{2}}\left(\epsilon_{l-1}-\epsilon_l\right),\alpha_l=\sqrt{2}\epsilon_l\right\},$$ (where $\left\{\epsilon_1,\ldots, \epsilon_l\right\}$ is as before orthonormal basis of the $\mathbb{R}^l$),
the set of positive roots: $$R_{+}=\left\{\frac{1}{\sqrt{2}}\left(\epsilon_{i}- \epsilon_{j}\right):i < j\right\}\cup\left\{\frac{1}{\sqrt{2}}\left(\epsilon_{i}+ \epsilon_{j}\right):i \neq j\right\}\cup \left\{ \sqrt{2}\epsilon_{i}: 1 \leq i \leq l\right\}$$ and the highest root $$\theta=\sqrt{2}\epsilon_1=2\alpha_1 + \cdots +2\alpha_{l-1}+\alpha_l$$
(cf: \cite{K}).
We denote the vector space 
\begin{equation*}
U_{C_l^{(1)}} = U(\mathcal{L}\left(\mathfrak{n}_{\alpha_{1}}\right))\cdots U(\mathcal{L}\left(\mathfrak{n}_{\alpha_{l}}\right)),
\end{equation*}
where 
\begin{equation*}
\mathcal{L}(\mathfrak{n}_{\alpha} ) = \mathfrak{n}_{\alpha}  \otimes \mathbb{C}[t,t^{-1}],
 \ \ \ \mathfrak{n}_{\alpha} = \mathbb{C}x_{\alpha}, \  \  \alpha \in R_+.
\end{equation*}
Now, we have:
\begin{lem}\label{prvalemaC} Let $k\geq 1$. We have
\begin{align}\nonumber
W_{L_{C_l^{(1)}}(k\Lambda_{0})}& = U_{C_l^{(1)}}v_{L_{C_l^{(1)}}(k\Lambda_{0})},\\
\nonumber
W_{N_{C_l^{(1)}}(k\Lambda_{0})}&=U_{C_l^{(1)}}v_{N_{C_l^{(1)}}(k\Lambda_{0})}.
\end{align}
\end{lem}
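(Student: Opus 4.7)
The plan is to follow Georgiev's argument from \cite{G}, which is also the strategy invoked for the $B_l^{(1)}$ counterpart in Lemma \ref{prvalemaB}. One inclusion is immediate: each simple-root generator $x_{\alpha_i}(j)$ lies in $\mathcal{L}(\mathfrak{n}_+)$, so $U_{C_l^{(1)}}v_\Lambda\subseteq W_V$ for either choice of $V$. For the reverse inclusion I would begin with the PBW basis of $U(\mathcal{L}(\mathfrak{n}_+))$: every vector of $W_V$ is a linear combination of ordered monomials $x_{\beta_1}(j_1)\cdots x_{\beta_r}(j_r)v_\Lambda$ with $\beta_i\in R_+$ and $j_i\in\mathbb{Z}$. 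In the generalized Verma case the modes may be restricted to $j_i<0$ since $\mathcal{L}(\mathfrak{n}_+)_{\ge 0}$ kills $v_{N_{C_l^{(1)}}(k\Lambda_0)}$, and the same annihilation applies on the standard module.

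The first key step is to rewrite every non-simple-root current $x_\beta(j)$ as a sum of products of simple-root currents. Because $\mathfrak{g}$ is of type $C_l$, every $\beta\in R_+$ admits an expression $\beta=\alpha_{i_1}+\cdots+\alpha_{i_h}$ in which each partial sum is again a positive root, so $x_\beta$ is a nonzero scalar multiple of the iterated bracket $[x_{\alpha_{i_1}},[x_{\alpha_{i_2}},\ldots,[x_{\alpha_{i_{h-1}}},x_{\alpha_{i_h}}]\cdots]]$. Passing to the affine side via $[x(m),y(n)]=[x,y](m+n)+\langle x,y\rangle m\,\delta_{m+n,0}c$, all the pairings $\langle x_{\alpha_{i_a}},x_\gamma\rangle$ that appear vanish (the two vectors lie in distinct weight spaces with $\alpha_{i_a}+\gamma\ne 0$), so no central contribution survives, and $x_\beta(j)$ expands into a finite sum of products of simple-root operators with mode sum $j$. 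Applying this to every factor of a PBW monomial yields a spanning set of $W_V$ consisting of products $\prod_t x_{\alpha_{i_t}}(n_t)v_\Lambda$ in simple-root currents only.

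The second step is to sort these products so that all $\alpha_1$-factors stand leftmost, then $\alpha_2$-factors, and so on, matching the definition $U_{C_l^{(1)}}=U(\mathcal{L}(\mathfrak{n}_{\alpha_1}))\cdots U(\mathcal{L}(\mathfrak{n}_{\alpha_l}))$. I would do this inductively: whenever two adjacent factors $x_{\alpha_j}(m)x_{\alpha_i}(m')$ appear with $i<j$, rewrite them as $x_{\alpha_i}(m')x_{\alpha_j}(m)+[x_{\alpha_j},x_{\alpha_i}](m+m')$, where the commutator is either zero or a single current $x_{\alpha_i+\alpha_j}(m+m')$ that, by the previous paragraph, expands back into simple-root operators. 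The main obstacle is verifying termination of this rewriting, since re-expanding the commutator's non-simple root vector superficially appears to increase the number of factors. The standard remedy, used in \cite{G} and \cite{Bu}, is to measure monomials by a pair such as (number of non-simple-root factors, lexicographic signature of the color sequence), which strictly decreases under each rewrite. The argument applies uniformly to $W_{L_{C_l^{(1)}}(k\Lambda_0)}$ and $W_{N_{C_l^{(1)}}(k\Lambda_0)}$, since the only inputs are the triviality of $\mathcal{L}(\mathfrak{n}_+)_{\ge 0}$ on the highest weight vector and the affine commutation relations, both of which hold in either module.
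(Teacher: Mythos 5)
Your overall strategy --- expand every non-simple-root current as an iterated bracket of simple-root currents, then sort the factors by colour --- is exactly the one the paper relies on (the paper gives no proof of Lemma \ref{prvalemaC} at all, deferring, as for Lemma \ref{prvalemaB}, to Georgiev's argument in \cite{G}), and your first step is sound: every $\beta\in R_+$ is a sum of simple roots with all partial sums roots, the relevant structure constants are nonzero, and the central terms vanish because $\left\langle \mathfrak{g}_{\alpha},\mathfrak{g}_{\gamma}\right\rangle=0$ unless $\alpha+\gamma=0$. The genuine gap is in the termination argument for the sorting step. No measure of the form (number of non-simple-root factors, colour signature) can strictly decrease under your rewriting, because the two moves form a cycle: swapping a bad adjacent pair turns a monomial with signature $(\ldots,j,i,\ldots)$ and no non-simple factors into one with signature $(\ldots,i,j,\ldots)$ plus a shorter monomial containing $x_{\alpha_i+\alpha_j}(M)$; re-expanding $x_{\alpha_i+\alpha_j}(M)$ as a multiple of $x_{\alpha_i}(a)x_{\alpha_j}(M-a)-x_{\alpha_j}(M-a)x_{\alpha_i}(a)$ produces, among other terms, a monomial with the original signature $(\ldots,j,i,\ldots)$ and again no non-simple factors. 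Whichever way you orient the order on the first coordinate, one of the two moves increases the measure, and their composition returns a monomial with the same measure you started from. In fact no purely combinatorial rewriting inside $U(\mathcal{L}(\mathfrak{n}_+))$ can work: by the Poincar\'e--Birkhoff--Witt theorem $x_{\alpha_1+\alpha_2}(n)$ does \emph{not} lie in the span of the products $x_{\alpha_1}(a)x_{\alpha_2}(b)$, so the equality $U(\mathcal{L}(\mathfrak{n}_+))v_{\Lambda}=U_{C_l^{(1)}}v_{\Lambda}$ is a statement about the module, not about the algebra.

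The missing ingredient is the annihilation property $x_{\alpha}(a)v_{\Lambda}=0$ for $a\geq 0$, which you list as an ``input'' but never actually use. It is precisely what breaks the cycle: for instance $x_{\alpha_1+\alpha_2}(n)v_{\Lambda}=c\,x_{\alpha_1}(0)x_{\alpha_2}(n)v_{\Lambda}-c\,x_{\alpha_2}(n)x_{\alpha_1}(0)v_{\Lambda}=c\,x_{\alpha_1}(0)x_{\alpha_2}(n)v_{\Lambda}$, with the wrongly ordered term killed outright by choosing the mode split $[x_{\alpha_1}(0),x_{\alpha_2}(n)]$. The argument of \cite{G} (and of \cite{Bu}) is organised as an induction on the height of the $Q_+$-degree, with a secondary induction on the number of factors: one writes a weight-$\mu$ vector as $x_{\beta}(n)w$ with $w$ already in sorted form by the inductive hypothesis, expands $x_{\beta}(n)$ as a commutator $[x_{\alpha_i}(a),x_{\beta-\alpha_i}(n-a)]$ with the mode $a$ chosen so that the unwanted term either annihilates $v_{\Lambda}$ or reduces to strictly lower height, and only then commutes the remaining simple current into its colour block. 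You should either carry out this double induction explicitly or at least exhibit a mode choice exploiting the highest-weight property at each re-expansion; as written, your rewriting procedure need not terminate.
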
 
\begin{flushright}
$\square$
\end{flushright}

\subsection{The spanning set of \texorpdfstring{$W_{L_{C_l^{(1)}}(k\Lambda_{0})}$}{W{LCl(1)(kLambda{0})}}}\label{Cl42}
Here we establish relations among quasi-particles of colors $i=l-1$ and $i=l$ and relations among quasi-particles of colors $i=1,\ldots , l$ and $j=i\pm 1$.
\par Note, that as in the case of $C_2^{(1)}$, we have:
\begin{lem}\label{lem:S235}
Let $n_{l-1},n_l \in \mathbb{N}$ be fixed. One has
\begin{align}\nonumber
\left(1-\frac{z_{l}}{z_{l-1}}\right)^{\emph{\text{min}}\left\{ 
n_{l-1},2n_{l}\right\}} x_{n_{l-1}\alpha_{l-1}}(z_{l-1})x_{n_{l }\alpha_{l }}(z_{l})v_{N_{C_l^{(1)}}(k\Lambda_{0})}\\
\label{al:S28} 
\in z_{l-1}^{-\emph{\text{min}}\left\{ 
n_{l-1},2n_{l}\right\}}W_{N_{C_l^{(1)}}(k\Lambda_{0})}\left[\left[z_{l-1},z_{l}\right]\right]. 
\end{align}
\end{lem}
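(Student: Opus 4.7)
My plan is to prove this by the same strategy used for the $B_2^{(1)}$ analog (Lemma \ref{lem:S321B}) drawn from \cite{Bu}. The key observation is that $\alpha_{l-1}$ and $\alpha_l$ span a rank-two subsystem of $C_l$ of type $C_2$ with positive roots $\alpha_{l-1}$, $\alpha_l$, $\alpha_{l-1}+\alpha_l$, $2\alpha_{l-1}+\alpha_l$; since $3\alpha_{l-1}+\alpha_l$ and $\alpha_{l-1}+2\alpha_l$ are not roots, we have the Serre-type identities $(\mathrm{ad}\, x_{\alpha_{l-1}})^{3} x_{\alpha_l}=0$ and $(\mathrm{ad}\, x_{\alpha_l})^{2} x_{\alpha_{l-1}}=0$, and these truncations are what ultimately produce the bound $\min\{n_{l-1},2n_l\}$.

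First I would handle the base case $n_{l-1}=n_l=1$. The commutator formula (\ref{al:S14}), together with $\langle x_{\alpha_{l-1}}, x_{\alpha_l}\rangle=0$ and $[x_{\alpha_{l-1}}, x_{\alpha_l}]\in\mathbb{C}^{*}\,x_{\alpha_{l-1}+\alpha_l}$, yields
\[
[x_{\alpha_{l-1}}(z_{l-1}), x_{\alpha_l}(z_l)] = c\,z_l^{-1}\delta\!\left(\tfrac{z_{l-1}}{z_l}\right)x_{\alpha_{l-1}+\alpha_l}(z_l),
\]
and multiplication by $(z_{l-1}-z_l)$ annihilates the right-hand side. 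Combined with the creation property $x_{\alpha_l}(z_l)v_{N_{C_l^{(1)}}(k\Lambda_0)}\in W_{N_{C_l^{(1)}}(k\Lambda_0)}[[z_l]]$, this gives the conclusion for $n_{l-1}=n_l=1$. For general charges I would iterate: writing $x_{n_l\alpha_l}(z_l)=x_{\alpha_l}(z_l)^{n_l}$ and moving each of the $n_{l-1}$ factors of $x_{\alpha_{l-1}}(z_{l-1})$ past $x_{n_l\alpha_l}(z_l)$ produces, via the commutator formula, finite sums of derivatives of $z_l^{-1}\delta(z_{l-1}/z_l)$ with coefficients that are vertex operators of the form $Y(x_{\alpha_{l-1}}(j)x_{\alpha_l}(-1)^{n_l}v_{N_{C_l^{(1)}}(k\Lambda_0)},z_l)$. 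The Serre truncation $(\mathrm{ad}\, x_{\alpha_l})^{2}x_{\alpha_{l-1}}=0$ caps the surviving derivatives at order $2n_l-1$, so that a factor $(z_{l-1}-z_l)^{2n_l}$ removes every $z_{l-1}=z_l$ singularity; independently, only $n_{l-1}$ commutator slots are available from the $n_{l-1}$ factors of $x_{\alpha_{l-1}}(z_{l-1})$, giving a second bound of $n_{l-1}$. Hence $(z_{l-1}-z_l)^{\min\{n_{l-1},2n_l\}}$ suffices, and each resulting coefficient, being a polynomial in positive root vectors applied to $v_{N_{C_l^{(1)}}(k\Lambda_0)}$, lies in $W_{N_{C_l^{(1)}}(k\Lambda_0)}$.

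The main obstacle is the bookkeeping of all commutator terms produced during these iterated rearrangements and verifying both that no spurious $z_l$-singularities are introduced and that every coefficient vector is genuinely in the principal subspace. This is precisely the computation carried out in \cite{Bu} for $B_2^{(1)}$, with the roles of the short and long simple roots exchanged; the same expansion transcribes verbatim to the rank-two subsystem $\{\alpha_{l-1},\alpha_l\}\subset C_l$ considered here, which is presumably why the author states the result with no further proof.
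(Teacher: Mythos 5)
Your proposal matches the paper's (implicit) proof: the lemma is stated there without argument, as the direct transcription to the rank-two subsystem $\{\alpha_{l-1},\alpha_l\}$ of the $B_2^{(1)}$ computation from \cite{Bu}, which is exactly the reduction you carry out via the commutator formula, the two string-length truncations, and the creation property. One small slip in your second paragraph: the cap of the pole order at $2n_l$ comes from $(\mathrm{ad}\, x_{\alpha_{l-1}})^{3}x_{\alpha_l}=0$ (each of the $n_l$ long-root factors can absorb at most two short-root contractions), not from $(\mathrm{ad}\, x_{\alpha_l})^{2}x_{\alpha_{l-1}}=0$ as written there --- though you do state the correct relation in your opening paragraph, so the argument as a whole stands.
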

\begin{flushright}
$\square$
\end{flushright}
Using the commutator formula for vertex operators we can prove:
\begin{lem}\label{lem:S232Cl}
Let $1 \leq i \leq l-2$, $n_{i+1},n_i \in \mathbb{N}$ be fixed. One has
\begin{itemize}
	\item [a)]  
$(z_{1}-z_{2})^{n_i}x_{n_i\alpha_{i}}(z_{l})x_{n_{i+1}\alpha_{i+1}}(z_{2})=(z_{1}-z_{2})^{n_i}x_{n_{i+1}\alpha_{i+1}}(z_{2})x_{n_i\alpha_{i}}(z_{1}).$
\item [b)] 
$(z_{1}-z_{2})^{n_{i+1}}x_{n_i\alpha_{i}}(z_{1})x_{n_{i+1}\alpha_{i+1}}(z_{2})=(z_{1}-z_{2})^{n_{i+1}}x_{n_{i+1}\alpha_{i+1}}(z_{i+1})x_{n_i\alpha_{i}}(z_{1}).$
\end{itemize}
\end{lem}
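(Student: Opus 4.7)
The plan is to mimic the proof of Lemma \ref{lem:S322B} from the $B_l^{(1)}$ case, applying the commutator formula (\ref{al:S14}) to vertex operators associated to the simple roots $\alpha_i$ and $\alpha_{i+1}$. First I would compute the commutator $[x_{\alpha_i}(z_1),\,x_{\alpha_{i+1}}(z_2)^{n_{i+1}}]$, show that it is supported on the diagonal $z_1=z_2$ (i.e.\ reduces to $z_2^{-1}\delta(z_1/z_2)$ times an operator in $z_2$ alone), and then use the identity $(z_1-z_2)\,z_2^{-1}\delta(z_1/z_2)=0$ to kill it. Promoting $x_{\alpha_i}(z_1)$ to $x_{n_i\alpha_i}(z_1)=x_{\alpha_i}(z_1)^{n_i}$ is then routine, and part (b) is symmetric.

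The heart of the argument is to identify $x_{\alpha_i}(j)\,x_{\alpha_{i+1}}(-1)^{n_{i+1}}v_{N_{C_l^{(1)}}(k\Lambda_0)}$ for every $j\geq 0$. Commuting $x_{\alpha_i}(j)$ successively through the string $x_{\alpha_{i+1}}(-1)^{n_{i+1}}$ produces at each step the bracket
\[
\bigl[x_{\alpha_i}(j),\,x_{\alpha_{i+1}}(-1)\bigr]=c\,x_{\alpha_i+\alpha_{i+1}}(j-1),
\]
with no central contribution because $\alpha_i+\alpha_{i+1}\neq 0$, where $c$ denotes the relevant structure constant. The key observation is that for $1\leq i\leq l-2$ the vector $\alpha_i+2\alpha_{i+1}=\tfrac{1}{\sqrt{2}}(\epsilon_i+\epsilon_{i+1}-2\epsilon_{i+2})$ is not a root of $C_l$, so $x_{\alpha_i+\alpha_{i+1}}$ commutes with $x_{\alpha_{i+1}}$ in $\mathfrak{g}$. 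This allows me to slide $x_{\alpha_i+\alpha_{i+1}}(j-1)$ past every remaining $x_{\alpha_{i+1}}(-1)$ factor, so that for $j\geq 1$ it annihilates the vacuum $v_{N_{C_l^{(1)}}(k\Lambda_0)}$. Only the $j=0$ contribution survives.

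Plugging this into (\ref{al:S14}) with $r=n_{i+1}$ then yields
\[
\bigl[x_{\alpha_i}(z_1),\,x_{\alpha_{i+1}}(z_2)^{n_{i+1}}\bigr]=n_{i+1}\,c\,z_2^{-1}\delta(z_1/z_2)\,x_{\alpha_i+\alpha_{i+1}}(z_2)\,x_{\alpha_{i+1}}(z_2)^{n_{i+1}-1},
\]
so multiplication by $(z_1-z_2)$ annihilates the right-hand side. To pass from $x_{\alpha_i}(z_1)$ to $x_{\alpha_i}(z_1)^{n_i}$, I would expand $[x_{\alpha_i}(z_1)^{n_i},\,x_{\alpha_{i+1}}(z_2)^{n_{i+1}}]$ telescopically as $\sum_{p=0}^{n_i-1}x_{\alpha_i}(z_1)^p\,[x_{\alpha_i}(z_1),\,x_{\alpha_{i+1}}(z_2)^{n_{i+1}}]\,x_{\alpha_i}(z_1)^{n_i-1-p}$; since $(z_1-z_2)$ is a scalar series that commutes with every operator, a single factor already kills every summand, and therefore $(z_1-z_2)^{n_i}$ does so a fortiori. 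This gives (a). Part (b) follows by the same argument with $\alpha_i$ and $\alpha_{i+1}$ exchanged, this time exploiting the parallel fact that $2\alpha_i+\alpha_{i+1}=\tfrac{1}{\sqrt{2}}(2\epsilon_i-\epsilon_{i+1}-\epsilon_{i+2})$ is likewise not a root of $C_l$ for $1\leq i\leq l-2$.

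The only delicate ingredient is verifying these two non-rootedness statements: they are exactly what let the sliding argument terminate, and precisely what separates the interior case $1\leq i\leq l-2$ treated here from the boundary case $i=l-1$, in which $2\alpha_{l-1}+\alpha_l=\sqrt{2}\,\epsilon_{l-1}$ \emph{is} a long root, the locality order is genuinely larger, and a different argument---already carried out in Lemma \ref{lem:S235}---is required.
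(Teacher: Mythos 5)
Your proposal is correct and follows exactly the route the paper indicates (and leaves to the reader): apply the commutator formula (\ref{al:S14}), observe that $x_{\alpha_i}(j)x_{\alpha_{i+1}}(-1)^{n_{i+1}}v_{N_{C_l^{(1)}}(k\Lambda_0)}$ vanishes for $j\geq 1$ because $\alpha_i+2\alpha_{i+1}$ is not a root of $C_l$ for $i\leq l-2$, so only the $j=0$ term survives and is killed by a single factor of $(z_1-z_2)$. The only cosmetic quibble is that $Y(x_{\alpha_i}(0)x_{\alpha_{i+1}}(-1)^{n_{i+1}}v,z_2)$ is a normally ordered product rather than the literal product you wrote, but this does not affect the argument since all that matters is that the commutator is supported on $z_2^{-1}\delta(z_1/z_2)$.
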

\begin{flushright}
$\square$
\end{flushright}
Using the same arguments as in proof of Lemma \ref{lem:S323B} follows:
\begin{lem}\label{lem:S235Cl}
\begin{align}\nonumber
\left(1-\frac{z_{i}}{z_{i+1}}\right)^{\emph{\text{min}}\left\{ 
n_{i+1},n_{i}\right\}}  x_{n_{i}\alpha_{i}}(z_{i}) x_{n_{i +1}\alpha_{i+1 }}(z_{i+1 }) v_{N_{C_l^{(1)}}(k\Lambda_{0})}\\
\label{al:S28Cl} 
\in 
z_{p,i+1}^{- \emph{\text{min}}\left\{ 
n_{i+1},n_{i}\right\}} W_{N_{C_l^{(1)}}(k\Lambda_{0})}\left[\left[z_{i},z_{i+1}\right]\right]. 
\end{align}
\end{lem}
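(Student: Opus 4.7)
The plan is to mimic verbatim the short argument used for Lemma \ref{lem:S323B} in the $B_l^{(1)}$ case, since the only ingredients needed are the swap relations already stated in Lemma \ref{lem:S232Cl} (the $C_l^{(1)}$ analogue of Lemma \ref{lem:S322B}) together with the creation property of vertex operators. Concretely, I would first pick $n=\min\{n_{i+1},n_i\}$ and invoke Lemma \ref{lem:S232Cl}(a) (or equivalently (b); either suffices since we are taking the minimum) to obtain the formal identity
\begin{equation*}
(z_i-z_{i+1})^{n}\, x_{n_i\alpha_i}(z_i)\, x_{n_{i+1}\alpha_{i+1}}(z_{i+1})
=(z_i-z_{i+1})^{n}\, x_{n_{i+1}\alpha_{i+1}}(z_{i+1})\, x_{n_i\alpha_i}(z_i),
\end{equation*}
valid as an equality of formal series in $z_i,z_{i+1}$.

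Next I would apply both sides to the vacuum vector $v_{N_{C_l^{(1)}}(k\Lambda_0)}$. By the creation property (cf.\ \cite{LL}), on the right-hand side each vertex operator $x_{n_j\alpha_j}(z_j)$ involves only non-negative powers of its variable when applied to the vacuum and subsequently to a vector produced from the vacuum, so the right-hand side lies in $W_{N_{C_l^{(1)}}(k\Lambda_0)}[[z_i,z_{i+1}]]$. Hence
\begin{equation*}
(z_i-z_{i+1})^{n}\, x_{n_i\alpha_i}(z_i)\, x_{n_{i+1}\alpha_{i+1}}(z_{i+1})\, v_{N_{C_l^{(1)}}(k\Lambda_0)}
\in W_{N_{C_l^{(1)}}(k\Lambda_0)}[[z_i,z_{i+1}]].
\end{equation*}

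Finally I would rewrite the polynomial factor by extracting $z_{i+1}$: since
\begin{equation*}
(z_i-z_{i+1})^{n}=(-1)^{n}z_{i+1}^{n}\bigl(1-\tfrac{z_i}{z_{i+1}}\bigr)^{n},
\end{equation*}
dividing by $(-1)^{n}z_{i+1}^{n}$ on both sides yields exactly the membership claimed in \eqref{al:S28Cl} (with the $z_{i+1}^{-\min\{n_{i+1},n_i\}}$ prefactor on the right). I do not anticipate any real obstacle: the entire nontrivial content has already been isolated in Lemma \ref{lem:S232Cl}, whose proof uses the commutator formula \eqref{al:S14}; what remains is just the standard \enquote{expand $(z_i-z_{i+1})^n$ and use the creation property} manoeuvre, identical to the one used for $B_l^{(1)}$, and the hypothesis $1\le i\le l-2$ ensures $\alpha_i$ and $\alpha_{i+1}$ are both long simple roots of $C_l$ so no extra care is needed for short-root factors.
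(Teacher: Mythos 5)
Your proposal is essentially the paper's own argument: the paper proves this lemma by declaring it follows "using the same arguments as in the proof of Lemma \ref{lem:S323B}", i.e.\ the swap identity of Lemma \ref{lem:S232Cl} taken with exponent $\min\{n_i,n_{i+1}\}$, the creation property, and the factorization $(z_{i+1}-z_i)^n=z_{i+1}^n(1-z_i/z_{i+1})^n$. One point to tighten: your justification that the right-hand side lies in $W_{N_{C_l^{(1)}}(k\Lambda_0)}[[z_i,z_{i+1}]]$ is not quite right as stated, because the creation property only controls the variable of the \emph{innermost} operator --- $x_{n_{i+1}\alpha_{i+1}}(z_{i+1})$ applied to a vector already produced from the vacuum can perfectly well contribute negative powers of $z_{i+1}$; the correct conclusion comes from using \emph{both} orderings, since the left-hand side has no negative powers of $z_{i+1}$ (there $x_{n_{i+1}\alpha_{i+1}}(z_{i+1})$ hits the vacuum first) while the swapped side has no negative powers of $z_i$, and the equality of the two forces the common series into $W_{N_{C_l^{(1)}}(k\Lambda_0)}[[z_i,z_{i+1}]]$. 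Relatedly, parts a) and b) of Lemma \ref{lem:S232Cl} are not interchangeable here: you must use the one whose exponent is $\min\{n_i,n_{i+1}\}$, since the identity with the larger exponent does not imply the one with the smaller.
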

\begin{flushright}
$\square$
\end{flushright}
Now, as in Subsection \ref{Bl32} follows 
\begin{prop}\label{prop:S22Cl} The set $\mathfrak{B}_{W_{L_{C_l^{(1)}}(k{\Lambda}_{0})}}=\left\{bv_{L_{C_l^{(1)}}(k\Lambda_{0})}:b \in B_{W_{L_{C_l^{(1)}}\left(k\Lambda_{0}\right)}}\right\}$, where
\begin{equation}\label{SkupLCl}
B_{W_{L_{C_l^{(1)}}\left(k\Lambda_{0}\right)}}= \bigcup_{\substack{n_{r_{1}^{(1)},1}\leq \ldots \leq n_{1,1}\leq 
2k\\\substack{ \ldots \\n_{r_{l-1}^{(1)},l-1}\leq \ldots \leq n_{1,l-1}\leq 2k}\\n_{r_{l}^{(1)},l}\leq \ldots \leq n_{1,l}\leq k}}\left(\text{or, equivalently,} \ \ \ 
\bigcup_{\substack{r_{1}^{(1)}\geq \cdots\geq r_{1}^{(2k)}\geq 0\\\substack{\cdots \\r_{l-1}^{(1)}\geq \cdots\geq r_{l-1}^{(2k)}\geq 
0}\\r_{l}^{(1)}\geq \cdots\geq r_{l}^{(k)}\geq 
0}}\right)
\end{equation}
\begin{equation*}\left\{b\right.=b(\alpha_{1})\cdots b(\alpha_{l})
=x_{n_{r_{1}^{(1)},1}\alpha_{1}}(m_{r_{1}^{(1)},1})\cdots  x_{n_{1,l}\alpha_{l}}(m_{1,l}):\end{equation*}
\begin{align}\nonumber
\left|
\begin{array}{l}
m_{p,l}\leq  -n_{p,l} - \sum_{p>p'>0} 2 \ \text{min}\{n_{p,l}, n_{p',l}\}, \  1\leq  p\leq r_{l}^{(1)};\\
m_{p+1,l}\leq  m_{p,l}-2n_{p,l} \  \text{if} \ n_{p,l}=n_{p+1,l}, \  1\leq  p\leq r_{l}^{(1)}-1;\\
m_{p,l-1}\leq  -n_{p,l-1}+ \sum_{q=1}^{r_{l}^{(1)}}\text{min}\left\{2n_{q,l},n_{p,l-1}\right\}- \sum_{p>p'>0} 
2 \ \text{min}\{n_{p,l}, n_{p',l}\},\\
\ \ \ \ \ \ \ \ \ \ \ \ \ \ \ \ \ \ \ \ \ \ \ \ \ \ \ \ \ \ \ \ \ \ \ \ \ \ \ \ \ \ \ \ \ \ \ \ \ \ \ \ \ \ \ \ \ \ \ \ \ \ \ \ \ \ \ \ \ \ \ \ \ \ \ \ \ \ \ \ \  1\leq  p\leq r_{l}^{(1)};\\
m_{p+1,l-1} \leq   m_{p,l-1}-2n_{p,l-1} \  \text{if} \ n_{p+1,l-1}=n_{p,l-1}, \ 1\leq  p\leq r_{l-1}^{(1)}-1;\\
m_{p,i}\leq  -n_{p,i}+ \sum_{q=1}^{r_{i+1}^{(1)}}\text{min}\left\{n_{q,i+1},n_{p,i}\right\}- \sum_{p>p'>0} 
2 \ \text{min}\{n_{p,i}, n_{p',i}\},\\
\ \ \ \ \ \ \ \ \ \ \ \ \ \ \ \ \ \ \ \ \ \ \ \ \ \ \ \ \ \ \ \ \ \ \ \ \ \ \ \ \ \ \ \ \ \ \ \ \ \ \ \ \ \ \ \ \ \ \ \ \ \ \ \ 1\leq  p\leq r_{i}^{(1)}, \ 1 \leq i \leq l-2;\\
m_{p+1,i} \leq   m_{p,i}-2n_{p,i} \  \text{if} \ n_{p+1,i}=n_{p,i}, \ 1\leq  p\leq r_{i}^{(1)}-1, \ 1 \leq i \leq l-2\\
\end{array}
\right\},
\end{align} spans the principal subspace $W_{L_{C_l^{(1)}}\left(k\Lambda_{0}\right)}$.
\end{prop}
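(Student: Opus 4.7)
The plan is to mirror the proof of Proposition \ref{prop:S22B}, adapting it to the reversed color ordering $b(\alpha_1)\cdots b(\alpha_l)$ and to the fact that in type $C_l$ the long root is $\alpha_l$ while $\alpha_1,\ldots,\alpha_{l-1}$ are short. By Lemma \ref{prvalemaC}, it suffices to rewrite an arbitrary polychromatic monomial vector of the form $b(\alpha_1)\cdots b(\alpha_l)v_{L_{C_l^{(1)}}(k\Lambda_0)}$, where each $b(\alpha_i)$ is monochromatic of color $i$, as a linear combination of vectors from $\mathfrak{B}_{W_{L_{C_l^{(1)}}(k\Lambda_0)}}$. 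The charge bounds $n_{p,l} \leq k$ and $n_{p,i} \leq 2k$ for $1 \leq i \leq l-1$ follow immediately from the level-$k$ relations (\ref{al:S15}) and (\ref{al:S16}), since $\alpha_l$ is long and $\alpha_1,\ldots,\alpha_{l-1}$ are short.

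The rewriting then proceeds by double induction: outer induction on the charge-type in the lexicographic ordering of Subsection \ref{ss:qpm}, inner induction on the total energy. The intra-color difference-two conditions $m_{p+1,i} \leq m_{p,i}-2n_{p,i}$ (when $n_{p+1,i}=n_{p,i}$) come from iterating Corollary \ref{cor:S21}: a violating pair is rewritten either as a linear combination of monomials satisfying the bound (same charge-type, strictly lower energy) or as monomials containing a higher-charge quasi-particle $x_{(n+1)\alpha_i}(j')$ (strictly larger charge-type), both covered by the inductive hypothesis. The cross-color bounds are produced by Lemmas \ref{lem:S235}, \ref{lem:S232Cl}, \ref{lem:S235Cl}: expanding
\begin{equation*}
\left(1-\frac{z_i}{z_{i+1}}\right)^{\min\{n_{i+1},n_i\}}x_{n_i\alpha_i}(z_i)x_{n_{i+1}\alpha_{i+1}}(z_{i+1})v \in z_{i+1}^{-\min\{n_{i+1},n_i\}}W[[z_i,z_{i+1}]]
\end{equation*}
and extracting the coefficient of $z_i^{-m_{p,i}-1}z_{i+1}^{-m_{p',i+1}-1}$ translates, after iteration over all cross-color pairs, into the bound $m_{p,i}\leq -n_{p,i}+\sum_q \min\{n_{q,i+1},n_{p,i}\}$; the analogous relation in Lemma \ref{lem:S235} yields the $\min\{2n_{q,l},n_{p,l-1}\}$ contribution for the $(l-1,l)$ pair.

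The main obstacle will be the combinatorial bookkeeping: verifying that the aggregate intra-color penalty $-2\sum_{p>p'>0}\min\{n_{p,i},n_{p',i}\}$ emerges correctly when iterating the rewriting over all pairs within a fixed color, and that the combined intra- and cross-color contributions reproduce exactly the system of inequalities stated in the proposition. Once checked for a single color $i$, the remaining colors are handled identically, except for the short-long transition at $(\alpha_{l-1},\alpha_l)$ which is treated via Lemma \ref{lem:S235}. Termination is guaranteed because each non-trivial rewrite step either strictly increases the charge-type (bounded above by the charge constraints above) or strictly decreases the total energy while preserving charge-type, so the algorithm must end with a linear combination of monomials in $B_{W_{L_{C_l^{(1)}}(k\Lambda_0)}}$.
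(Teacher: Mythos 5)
Your proposal is correct and follows essentially the same route as the paper, which itself only sketches the argument (the paper proves this proposition "as in Subsection \ref{Bl32}", i.e.\ by the double induction on charge-type and total energy from \cite{Bu} and \cite{G}, using the same-color relations of Lemma \ref{lem:S21} and Corollary \ref{cor:S21} together with the cross-color relations of Lemmas \ref{lem:S235}, \ref{lem:S232Cl} and \ref{lem:S235Cl}). Your identification of the charge bounds $n_{p,l}\leq k$, $n_{p,i}\leq 2k$ from the long/short root relations (\ref{al:S15}), (\ref{al:S16}) and of the termination argument matches what the paper intends.
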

\begin{flushright}
$\square$
\end{flushright}

\subsection{Proof of linear independence}
As we mentioned in the Introduction, we prove the linear independence of the monomial vectors from Proposition \ref{prop:S22Cl} using the coefficient of an intertwining operator, the simple current operator and the \enquote{Weyl group translation} operator. So, first we describe their properties, which we use in the proof of linear independence of quasi-particle bases. 

\subsubsection{\textbf{Projection \texorpdfstring{$\pi_{\mathfrak{R}}$}{piR}}}\label{ss:projCl}
Fix a level $k> 1$. Consider the direct sum decomposition of tensor product of $k$ principal subspaces $W_{L_{C_l^{(1)}}( \Lambda _{0})}$ of level 1 standard modules $L_{C_l^{(1)}}( \Lambda _{0})$
\begin{equation*}
W_{L_{C_l^{(1)}}( \Lambda _{0})}\otimes \cdots\otimes W_{L_{C_l^{(1)}}( \Lambda _{0})}=\bigcup_{\substack{u_{1}^{(1)}\geq \cdots\geq u_{1}^{(k)}\geq 0\\\substack{\cdots \\u_{l-1}^{(1)}\geq \cdots\geq u_{l-1}^{(k)}\geq 
0}\\u_{l}^{(1)}\geq \cdots\geq u_{l}^{(k)}\geq 
0}} {W_{L_{C_l^{(1)}}( \Lambda _{0})}}_{(u^{(k)}_{1};\ldots ;u^{(k)}_{l-1}; u_{l}^{(k)})}\otimes \cdots \otimes  {W_{L_{C_l^{(1)}}( \Lambda _{0})}}_{(u^{(1)}_{1};\ldots ;u^{(1)}_{l-1}; u_{l}^{(1)})},
\end{equation*}
where $ {W_{L_{C_l^{(1)}}( \Lambda _{0})}}_{(u^{(j)}_{1};\ldots ;u^{(j)}_{l-1}; u_{l}^{(j)})}$ is a $\mathfrak{h}$-weight subspace of weight $\sum_{i=1}^l u^{(j)}_i\alpha_i$, $1 \leq j \leq k$, and
where
\begin{equation*}
v_{L_{C_l^{(1)}}(k\Lambda_0)}=\underbrace{v_{L_{C_l^{(1)}}(\Lambda_{0})} \otimes \cdots \otimes v_{L_{C_l^{(1)}}(\Lambda_{0})}}_{k \ \text{factors}}\end{equation*}
is the highest weight vector of weight $k\Lambda_0$. 
\par For a chosen dual-charge-type 
\begin{equation*}
\mathfrak{R}=\left( r_{1}^{(1)}, \ldots ,r_{1}^{(2k)}; \ldots ; r_{l-1}^{(1)}, \ldots ,r_{l-1}^{(2k)};r_{l}^{(1)}, \ldots , r_{l}^{(k)}\right),
\end{equation*}
set the projection $\pi_{\mathfrak{R}}$ of principal subspace $W_{L_{C_l^{(1)}}(k\Lambda_{0})}$ 
to the subspace
\begin{equation*}
 {W_{L_{C_l^{(1)}}( \Lambda _{0})}}_{(\mu^{(k)}_{1};\ldots ;\mu^{(k)}_{l-1}; r_{l}^{(k)})}\otimes \cdots \otimes  {W_{L_{C_l^{(1)}}( \Lambda _{0})}}_{(\mu^{(1)}_{1};\ldots ;\mu^{(1)}_{l-1}; r_{l}^{(1)})},
\end{equation*}
where 
\begin{equation*}  
\mu^{(t)}_{i}=r^{(2t)}_{i}+ r^{(2t-1)}_{i},
\end{equation*}
for every $1 \leq  t \leq k$ and $1 \leq i \leq l-1$. 
If we denote by the same symbol $\pi_{\mathfrak{R}}$ the generalization of this projection to the space of formal series with coefficients in $W_{L_{C_l^{(1)}}( \Lambda _{0})} \otimes \cdots \otimes  W_{L_{C_l^{(1)}}( \Lambda _{0})}$, then for a generating function
\begin{equation}\label{eq:S31Cl}
x_{n_{r_{1}^{(1)},1}\alpha_{1}}(z_{r_{1}^{(1)},1}) \cdots     x_{n_{1,1}\alpha_{1}}(z_{1,1})\cdots x_{n_{r_{l-1}^{(1)},l-1}\alpha_{l-1}}(z_{r_{l-1}^{(1)},l-1}) \cdots     x_{n_{1,l-1}\alpha_{l-1}}(z_{1,l-1})
\end{equation}
\begin{equation*}
x_{n_{r_{l}^{(1)},l}\alpha_{l}}(z_{r_{l}^{(1)},l})\cdots  x_{n_{1,l}\alpha_{l}}(z_{1,l}),
\end{equation*}
we have 
\begin{align}
\label{projekcijaCl}
\pi_{\mathfrak{R}}& x_{n_{r_{1}^{(1)},1}\alpha_{1}}(z_{r_{1}^{(1)},1})\cdots  x_{n_{1,l}\alpha_{l}}(z_{1,l}) \ v_{L_{C_l^{(1)}}(k\Lambda_{0})}&\\
\nonumber
=&\text{C}  
 \ x_{n^{(k)}_{r^{(2k-1)}_{1},1}\alpha_{1}}(z_{r_{1}^{(2k-1)},1})\cdots                x_{n^{(k)}_{r^{(2k)}_{1},1}\alpha_{1}}(z_{r^{(2k)}_{1},1})\cdots  x_{n^{(k)}_{1,1}\alpha_{1}}(z_{1,1})\cdots &\\
\nonumber
&\ \cdots x_{n^{(k)}_{r^{(2k-1)}_{l-1},l-1}\alpha_{l-1}}(z_{r_{1}^{(2k-1)},l-1})\cdots                x_{n^{(k)}_{r^{(2k)}_{l-1},l-1}\alpha_{l-1}}(z_{r^{(2k)}_{l-1},l-1})\cdots  x_{n^{(k)}_{1,l-1}\alpha_{l-1}}(z_{1,l-1})&\\
\nonumber
&  \ \ \ \ \ \ \ \ \ \ \ \ \ \ \ \ \ \ \ \ \ \ \ \ \ \ \ \ \ \ \ \  x_{n^{(k)}_{r^{(k)}_{l},l}\alpha_{l}}(z_{r_{l}^{(k)},l})\cdots   \cdots x_{n^{(k)}_{1,l}\alpha_{l}}(z_{1,l}) \ v_{L_{C_l^{(1)}}(\Lambda_{0})}\otimes &\\
\nonumber
& \ \ \ \ \ \ \ \ \ \ \ \ \ \ \ \ \otimes \ldots \otimes&\\
\nonumber
\otimes &
 \ x_{n^{(1)}_{r^{( 1)}_{1},1}\alpha_{1}}(z_{r_{1}^{( 1)},1})\cdots                x_{n^{(1)}_{r^{(2)}_{1},1}\alpha_{1}}(z_{r^{(2)}_{1},1})\cdots  x_{n^{(1)}_{1,1}\alpha_{1}}(z_{1,1})\cdots &\\
\nonumber
&\ x_{n^{(1)}_{r^{( 1)}_{l-1},l-1}\alpha_{l-1}}(z_{r_{1}^{( 1)},l-1})\cdots                x_{n^{(1)}_{r^{(2 )}_{l-1},l-1}\alpha_{l-1}}(z_{r^{(2 )}_{l-1},l-1})\cdots  x_{n^{(1)}_{1,l-1}\alpha_{l-1}}(z_{1,l-1})&\\
\nonumber
&  \ \ \ \ \ \ \ \ \ \ \ \ \ \ \ \ \ \ \ \ \ \ \ \ \ \ \ \ \ \ \ \  x_{n^{(1)}_{r^{(1)}_{l},l}\alpha_{l}}(z_{r_{l}^{(1)},l})\cdots   x_{n^{(1)}_{1,l}\alpha_{l}}(z_{1,l}) \ v_{L_{C_l^{(1)}}(\Lambda_{0})},&
\end{align}
where $\text{C} \in \mathbb{C}^{*}$,  
\begin{equation*}
0 \leq  n^{(t)}_{p,l} \leq 1, \  n^{(1)}_{p,l}\geq n^{(2)}_{p,l}\geq \ldots \geq  n^{(k-1)}_{p,l}\geq  n^{(k)}_{p,l}, \  n_{p,l}=n^{(1)}_{p,l}+n^{(2)}_{p,l}+ \cdots +n^{(k-1)}_{p,l}+n^{(k)}_{p,l}
\end{equation*}
and 
\begin{equation*}
0 \leq  n^{(t)}_{p,i}\leq  2, \ n^{(1)}_{p,i}\geq  n^{(2)}_{p,i} \geq  \ldots  \geq  n^{(k-1)}_{p,i} \geq  n^{(k)}_{p,i}, \  n_{p,i}=n^{(1)}_{p,i}+n^{(2)}_{p,i}+ \cdots +n^{(k-1)}_{p,i}+n^{(k)}_{p,i},
\end{equation*}
for every $t$, $1\leq t \leq k$, and every $p$, $1 \leq p \leq r_{i}^{(1)}$, $1 \leq i \leq l$.
\par In the projection (\ref{projekcijaCl}), $n_{p,l}$ generating functions $x_{\alpha_{l}}(z_{p,l})$ ($1\leq p \leq r^{(1)}_{l}$), whose product generates a quasi-particle of charge $n_{p,l}$, 
 \enquote{are placed at} the first (from right to left) $n_{p,l}$ tensor factors.  
This can be shown as in the example in Figure \ref{slika3C}, where each box represents $ n_ {p,l}^{(t)}$. 
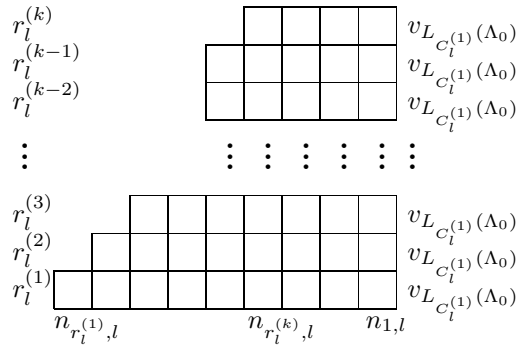
\begin{figure}[h!tb]
\centering
\setlength{\unitlength}{5mm}
\begin{picture}(10,10)
\linethickness{0.075mm}
\multiput(0,0)(1,0){1}
{\line(0,1){1}}
\multiput(1,0)(1,0){1}
{\line(0,1){2}}
\multiput(2,0)(1,0){5}
{\line(0,1){3}}
\multiput(2,1)(1,0){1}
{\line(1,0){6}}
\multiput(2,3)(1,0){1}
{\line(1,0){7}}
\multiput(1,0)(1,0){3}
{\line(0,1){2}}
\multiput(1,2)(0,1){1}
{\line(13,0){8}}
\multiput(4,0)(1,0){1}
{\line(0,1){2}}
\multiput(5,0)(1,0){2}
{\line(0,1){2}}
\multiput(8,0)(1,0){1}
{\line(0,1){2}}
\multiput(9,0)(1,0){1}
{\line(0,1){3}}
\multiput(8,5)(1,0){2}
{\line(0,1){3}}
\multiput(7,0)(1,0){1}
{\line(0,1){2}}
\multiput(7,5)(1,0){2}
{\line(0,1){3}}
\multiput(5,5)(1,0){2}
{\line(0,1){3}}
\multiput(5,7)(1,0){2}
{\line(1,0){3}}
\multiput(5,8)(1,0){2}
{\line(1,0){3}}
\multiput(4,7)(1,0){2}
{\line(1,0){3}}
\multiput(4,6)(1,0){2}
{\line(1,0){4}}
\multiput(4,5)(1,0){2}
{\line(1,0){4}}
\multiput(4,5)(1,0){1}
{\line(0,1){2}}
\multiput(0,0)(0,1){2}
{\line(1,0){9}}
\multiput(5,0)(1,0){2}
{\line(0,1){2}}
\multiput(7,0)(1,0){1}
{\line(0,1){3}}
\multiput(8,0)(1,0){2}
{\line(0,1){3}}
\multiput(6,0)(1,0){2}
{\line(0,1){2}}
\put(0,-0.5){\scriptsize{\footnotesize{$n_{r_l^{(1)},l}$}}}
\put(5.1,-0.5){\scriptsize{\footnotesize{$n_{r_l^{(k)},l}$}}}
\put(8.2,-0.5){\scriptsize{\footnotesize{$n_{1,l}$}}}
\put(4.5,3.7){$\textbf{\vdots}$}
\put(5.5,3.7){$\textbf{\vdots}$}
\put(6.5,3.7){$\textbf{\vdots}$}
\put(7.5,3.7){$\textbf{\vdots}$}
\put(8.5,3.7){$\textbf{\vdots}$}
\put(9.3,3.7){$\textbf{\vdots}$}
\put(-0.9,3.7){$\textbf{\vdots}$}
\put(-1.1,0.3){\footnotesize{$r_l^{(1)}$}}
\put(-1.1,1.3){\footnotesize{$r_l^{(2)}$}}
\put(-1.1,2.3){\footnotesize{$r_l^{(3)}$}}
\put(9.3,0.3){\footnotesize{$v_{L_{C_l^{(1)}}(\Lambda_{0})}$}}
\put(9.3,1.3){\footnotesize{$v_{L_{C_l^{(1)}}(\Lambda_{0})}$}}
\put(9.3,2.3){\footnotesize{$v_{L_{C_l^{(1)}}(\Lambda_{0})}$}}
\put(9.3,5.3){\footnotesize{$v_{L_{C_l^{(1)}}(\Lambda_{0})}$}}
\put(9.3,6.3){\footnotesize{$v_{L_{C_l^{(1)}}(\Lambda_{0})}$}}
\put(9.3,7.3){\footnotesize{$v_{L_{C_l^{(1)}}(\Lambda_{0})}$}}
\put(-1.1,5.3){\footnotesize{$r_l^{(k-2)}$}}
\put(-1.1,6.3){\footnotesize{$r_l^{(k-1)}$}}
\put(-1.1,7.3){\footnotesize{$r_l^{(k)}$}}
\end{picture}
\bigskip
\caption{Sketch of projection $\pi_{\mathfrak{R}}$ for color $i=l$}\label{slika3C}
\end{figure}
The situation for the genarationg functions of colors $1 \leq i\leq l-1$ can be shown as in the example in Figure \ref{slika4C},
\begin{figure}[h!tb]
\centering
\setlength{\unitlength}{5mm}
\begin{picture}(10,10)
\linethickness{0.075mm}
\multiput(0,0)(1,0){1}
{\line(0,1){1}}
\multiput(1,0)(1,0){1}
{\line(0,1){2}}
\multiput(2,0)(1,0){8}
{\line(0,1){4}}
\multiput(2,1)(1,0){1}
{\line(1,0){6}}
\multiput(2,3)(1,0){1}
{\line(1,0){7}}
\multiput(2,4)(1,0){1}
{\line(1,0){7}}
\multiput(1,0)(1,0){3}
{\line(0,1){2}}
\multiput(1,2)(0,1){1}
{\line(13,0){8}}
\multiput(4,0)(1,0){1}
{\line(0,1){2}}
\multiput(5,0)(1,0){2}
{\line(0,1){2}}
\multiput(8,0)(1,0){1}
{\line(0,1){2}}
\multiput(9,0)(1,0){1}
{\line(0,1){3}}
\multiput(8,6)(1,0){2}
{\line(0,1){3}}
\multiput(7,0)(1,0){1}
{\line(0,1){2}}
\multiput(7,6)(1,0){2}
{\line(0,1){3}}
\multiput(5,6)(1,0){2}
{\line(0,1){2}}
\multiput(5,7)(1,0){2}
{\line(1,0){3}}
\multiput(5,8)(1,0){2}
{\line(1,0){3}}
\multiput(6,9)(1,0){2}
{\line(1,0){2}}
\multiput(6,8)(1,0){1}
{\line(0,1){1}}
\multiput(7,8)(1,0){3}
{\line(0,1){2}}
\multiput(7,10)(1,0){2}
{\line(1,0){1}}
\multiput(5,8)(1,0){2}
{\line(1,0){2}}
\multiput(4,7)(1,0){2}
{\line(1,0){4}}
\multiput(4,6)(1,0){2}
{\line(1,0){4}}
\multiput(4,6)(1,0){1}
{\line(0,1){1}}
\multiput(0,0)(0,1){2}
{\line(1,0){9}}
\multiput(5,0)(1,0){2}
{\line(0,1){2}}
\multiput(7,0)(1,0){1}
{\line(0,1){3}}
\multiput(8,0)(1,0){2}
{\line(0,1){3}}
\multiput(6,0)(1,0){2}
{\line(0,1){2}}
\put(0,-0.5){\footnotesize{$n_{r_i^{(1)},i}$}}
\put(8.5,-0.5){\scriptsize{\footnotesize{$n_{1,i}$}}}
\put(4.5,4.7){$\textbf{\vdots}$}
\put(5.5,4.7){$\textbf{\vdots}$}
\put(6.5,4.7){$\textbf{\vdots}$}
\put(7.5,4.7){$\textbf{\vdots}$}
\put(8.5,4.7){$\textbf{\vdots}$}
\put(9.3,4.7){$\textbf{\vdots}$}
\put(-0.9,4.7){$\textbf{\vdots}$}
\put(-1.1,0.3){\footnotesize{$r_i^{(1)}$}}
\put(-1.1,1.3){\footnotesize{$r_i^{(2)}$}}
\put(-1.1,2.3){\footnotesize{$r_i^{(3)}$}}
\put(-1.1,3.3){\footnotesize{$r_i^{(4)}$}}
\put(9.3,0.9){\footnotesize{$v_{L_{C_l^{(1)}}(\Lambda_{0})}$}}
\put(9.3,2.9){\footnotesize{$v_{L_{C_l^{(1)}}(\Lambda_{0})}$}}
\put(9.3,6.9){\footnotesize{$v_{L_{C_l^{(1)}}(\Lambda_{0})}$}}
\put(9.3,8.9){\footnotesize{$v_{L_{C_l^{(1)}}(\Lambda_{0})}$}}
\put(-1.1,6.3){\footnotesize{$r_i^{(2k-3)}$}}
\put(-1.1,7.3){\footnotesize{$r_i^{(2k-2)}$}}
\put(-1.1,8.3){\footnotesize{$r_i^{(2k-1)}$}}
\put(-1.1,9.3){\footnotesize{$r_i^{(2k)}$}}
\end{picture}
\bigskip
\caption{Sketch of projection $\pi_{\mathfrak{R}}$ for color $1 \leq i\leq l-1$}\label{slika4C}
\end{figure}
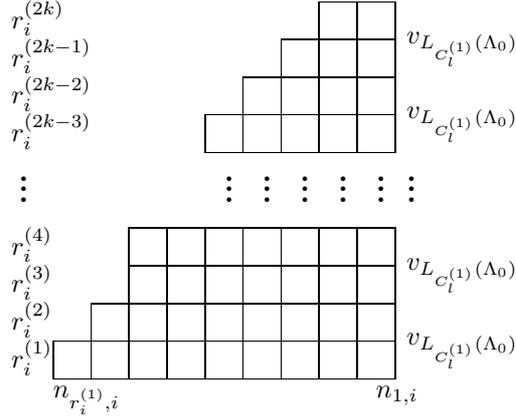
where two generating functions $x_{\alpha_{i}}(z_{p,i})$ ($1\leq p \leq r_i^{(1)}$) \enquote{are placed at} the  first $\frac{n_{p,i}}{2}$ tensor factors (from right to left) if $n_{p,i}$ is an even number and if $n_{p,i}$ is an odd number, then two generating functions  $x_{\alpha_{i}}(z_{p,i})$ \enquote{are placed at} the first $\frac{n_{p,i}-1}{2}$ tensor factors (from right to left), and the last generating  function $x_{\alpha_{i}}(z_{p,i})$ \enquote{is placed at} $\frac{n_{p,i}-1}{2}+1$ tensor factor. 
Therefore, for a given monomial $b\in B_{W_{L_{C_l^{(1)}}(k{\Lambda}_{0})}}$ 
\begin{equation}\label{eq:polCl}
b=x_{n_{r_{1}^{(1)},1}\alpha_{1}}(m_{r_{1}^{(1)},1})\cdots      x_{n_{1,1}\alpha_{1}}(m_{1,1})\cdots x_{n_{r_{l}^{(1)},l}\alpha_{l}}(m_{r_{l}^{(1)},l})\cdots  x_{n_{1,l}\alpha_{1}}(m_{1,l})
\end{equation}
colored with color-type $(r_{1}, \ldots ,r_{l}),$ charge-type $\mathfrak{R}'$ and dual-charge-type $\mathfrak{R}$, the projection is a coefficient of the projection of the generating function  (\ref{projekcijaCl}) which we denote as $$\pi_{\mathfrak{R}}bv_{L_{C_l^{(1)}}(k\Lambda_{0})}.$$

\subsubsection{\textbf{A coefficient of an intertwining operator}}\label{ss:intertCl}
With $A_{\omega_l}$, 
\begin{equation*} 
A_{\omega_l}=\text{Res}_z \ z^{-1}I(v_{L_{C_l^{(1)}}(\Lambda_l)}, z),
\end{equation*}
we denote the coefficient of an intertwining operator $I(\cdot , z)$ of type \begin{equation*} 
\binom{L_{C_l^{(1)}}(\Lambda_l)}{L_{C_l^{(1)}}(\Lambda_l) \ L_{C_l^{(1)}}( \Lambda _{0})},
\end{equation*}
defined by  
\begin{align}\label{al:S33Cl}
I(w,z)v=\exp(zL(-1))Y(v,-z)w, \ \  w \in L_{C_l^{(1)}}(\Lambda_l), v \in L_{C_l^{(1)}}( \Lambda _{0}),
\end{align}
which commutes with the quasi-particles (see \ref{ss:intertB}).
From definition (\ref{al:S33Cl}), we have
\begin{equation}\label{eq:S34Cl}
A_{\omega_l}v_{L_{C_l^{(1)}}(\Lambda_0)}=v_{L_{C_l^{(1)}}(\Lambda_l)}.
\end{equation}
\par Let $s\leq k$. As in the case $B_l^{(1)}$ we consider the operator on $L_{C_l^{(1)}}({\Lambda}_{0})\otimes \cdots \otimes L_{C_l^{(1)}}({\Lambda}_{0})$, which we denote by the same symbol as in \ref{ss:intertB}
\begin{equation*}
A_s=1\otimes\cdots \otimes  A_{\omega_{l}} \otimes \underbrace{1 \otimes \cdots \otimes 1}_{s-1 \ \text{factors}}.
\end{equation*}
Set $b\in B_{W_{L_{C_l^{(1)}}(k{\Lambda}_{0})}}$ as in (\ref{eq:polCl}). From the consideration in Subsection \ref{ss:projCl}, it follows that 
\begin{equation*}A_s\pi_{\mathfrak{R}}bv_{L_{C_l^{(1)}}(k{\Lambda}_{0})}
\end{equation*}
is the coefficient of  
\begin{equation*}
A_s\pi_{\textsl{\emph{R}}}x_{n_{r_{1}^{(1)},1}\alpha_{1}}(z_{r_{1}^{(1)},1})\cdots x_{s\alpha_{l}}(z_{1,l})v_{L_{C_l^{(1)}}(k{\Lambda}_{0})},
\end{equation*}
where operator $A_{\omega_{l}}$ acts only on the $s$-th tensor factor from the right  
\begin{equation*}\cdots \otimes  x_{n^{(s)}_{r^{(2s-1)}_{1},1}\alpha_{1}}(z_{r_{1}^{(2s-1)},1})\cdots   x_{n^{(s)}_{r^{(2s)}_{1},1}\alpha_{1}}(z_{r^{(2s)}_{1},1})\cdots  x_{n^{(s)}_{1,1}\alpha_{2}}(z_{1,1})\end{equation*}
\begin{equation*}\cdots x_{n^{(s)}_{r^{(s)}_{l},l}\alpha_{l}}(z_{r_{l}^{(s)},l})\cdots  x_{\alpha_{1}}(z_{1,l})v_{L_{C_l^{(1)}}(\Lambda_{0})}\otimes \cdots ,\end{equation*}
where $ 0 \leq  n^{(s)}_{p,l} \leq 1$, for $1\leq  p \leq  r^{(s)}_{l}$ and $ 0 \leq  n^{(s)}_{p,i} \leq 2$, for $1\leq p \leq    r^{(2s-1)}_{i}$, $1 \leq i \leq l-1$ (see (\ref{projekcijaCl})).  
Therefore, in the $s$-th tensor factor from the right, we have
\begin{equation*} \cdots \otimes x_{n^{(s)}_{r^{(2s-1)}_{1},1}\alpha_{1}}(z_{r_{1}^{(2s-1)},1})\cdots     x_{n^{(s)}_{r^{(2s)}_{1},1}\alpha_{1}}(z_{r^{(2s)}_{1},1})\cdots  x_{n^{(s)}_{1,1}\alpha_{1}}(z_{1,1})\end{equation*}
\begin{equation}\cdots \label{TanjaCl}x_{n^{(s)}_{r^{(s)}_{l},l}\alpha_{l}}(z_{r_{l}^{(s)},l})\cdots  x_{\alpha_{l}}(z_{1,l})v_{L_{C_l^{(1)}}(\Lambda_{0})}\otimes \cdots .
 \end{equation}

\subsubsection{\textbf{Simple current operator \texorpdfstring{$e_{\omega_{l}}$}{eomegal}}}\label{ss:currCl}
For $\omega_l \in \mathfrak{h}$ we denoted by $e_{\omega_{l}}$ a simple current operator (cf. \cite{Li2}) between the level $1$ standard modules
\begin{equation*}
e_{\omega_{l}}:L_{C_l^{(1)}}( \Lambda _{0})\rightarrow L_{C_l^{(1)}}(\Lambda_l),
\end{equation*}
such that
\begin{equation*}
e_{\omega_{l}}{v}_{L_{C_l^{(1)}}(\Lambda_{0})}=v_{L_{C_l^{(1)}}(\Lambda_{l})}
\end{equation*}
and 
\begin{equation}\label{S341Cl}
x_{\alpha}(z)e_{\omega_{l}}=e_{\omega_{l}}z^{\alpha(\omega_{l})}x_{\alpha}(z),
\end{equation}
for all $\alpha \in R$. 
\par We can rewrite (\ref{TanjaCl}) as 
\begin{align*} 
\cdots \otimes x_{n^{(s)}_{r^{(2s-1)}_{1},1}\alpha_{1}}(z_{r_{1}^{(2s-1)},1})\cdots     x_{n^{(s)}_{r^{(2s)}_{1},1}\alpha_{1}}(z_{r^{(2s)}_{1},1})\cdots  x_{n^{(s)}_{1,1}\alpha_{1}}(z_{1,1})\\
 \nonumber
x_{n^{(s)}_{r^{(2s-1)}_{l-1},l-1}\alpha_{l-1}}(z_{r_{l-1}^{(2s-1)},l-1})\cdots     x_{n^{(s)}_{r^{(2s)}_{l-1},l-1}\alpha_{l-1}}(z_{r^{(2s)}_{l-1},l-1})\cdots  x_{n^{(s)}_{1,l-1}\alpha_{1}}(z_{1,l-1})\\
\end{align*}
\begin{equation*}
x_{n^{(s)}_{r_{l}^{(k)},l}\alpha_{l}}(z_{r_{l}^{(k)},l})z_{r_{l}^{(k)},l}\cdots x_{\alpha_{l}}(z_{1,l})z_{1,l} e_{\omega_{l}}{v}_{L_{C_l^{(1)}}(\Lambda_{0})}\otimes \cdots.\end{equation*}
By taking the corresponding coefficients, we have
\begin{equation*}
A_s\pi_{\mathfrak{R}}bv_{L_{C_l^{(1)}}(k{\Lambda}_{0})}=B_s\pi_{\mathfrak{R}}b^{+}{v}_{L_{C_l^{(1)}}(\Lambda_{0})}
\end{equation*}
where
\begin{equation*}
B_s=1\otimes\cdots \otimes 1 \otimes      e_{\omega_{l}} \otimes \underbrace{1 \otimes \cdots \otimes 1}_{s-1\ \text{factors}}
\end{equation*}
and where the monomial $b^{+}$:
\begin{align*}
b^{+}=b^+(\alpha_{1})\cdots b^{+}(\alpha_{l}),
\end{align*}
is such that 
\begin{align*}
b^+(\alpha_{i})&=b(\alpha_{i}),\ \ 1 \leq i \leq l-1&\\
b^+(\alpha_{l})&=x_{n_{r_{l}^{(1)},l}\alpha_{l}}(m_{r_{l}^{(1)},l}+1)\cdots x_{s\alpha_{l}}(m_{1,1}+1)&\\
&=x_{n_{r_{l}^{(1)},l}\alpha_{l}}(m^{+}_{r_{l}^{(1)},l})\cdots x_{s\alpha_{l}}(m^{+}_{1,l}).&
\end{align*}

\subsubsection{\textbf{Operator \texorpdfstring{$e_{\alpha_{l}}$}{ealphal}}}\label{S66Cl}
On the level $1$ standard module $L_{C_l^{(1)}}( \Lambda _{0})$ we define the \enquote{Weyl group translation} operator  $e_{\alpha_l}$ 
\begin{equation*}
 e_{\alpha_l}=\exp\  x_{-\alpha_l}(1)\exp\  (- x_{\alpha_l}(-1))\exp\  x_{-\alpha_l}(1) \exp\ x_{\alpha_l}(0)\exp\   (-x_{-\alpha_l}(0))\exp\ x_{\alpha_l}(0).\end{equation*}
The properties of $e_{\alpha_{l}}$, which we use in the proof of linear independance are described in the following lemma.
  \begin{lem}
\begin{itemize}
	\item [a)]  $ e_{\alpha_l}v_{L_{C_l^{(1)}}(\Lambda_0)}=-x_{\alpha_{l}}(-1)v_{L_{C_l^{(1)}}(\Lambda_0)}$
\item [b)] 
$x_{\alpha_l}(z)e_{\alpha_l}=z^2e_{\alpha_l}x_{\alpha_l}(z)$
\item [c)] 
$x_{\alpha_{l-1}}(z)e_{\alpha_l}=z^{-1}e_{\alpha_l}x_{\alpha_{l-1}}(z)$
\item [d)]
$x_{\alpha_i}(z)e_{\alpha_l}= e_{\alpha_l}x_{\alpha_i}(z)$, $1 \leq i \leq l-2$.
\end{itemize}
\end{lem}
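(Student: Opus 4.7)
The plan is to prove (a) by applying the six exponential factors of $e_{\alpha_l}$ to $v_{L_{C_l^{(1)}}(\Lambda_0)}$ from right to left, using the highest-weight and integrability properties at level one to collapse all but one nontrivial contribution, and to prove (b)--(d) by computing $\mathrm{Ad}(e_{\alpha_l})$ factor-by-factor via $\mathrm{Ad}(\exp A) = \exp(\mathrm{ad}\,A)$. Conceptually, $e_{\alpha_l}$ is the classical Kac lift (cf.\ \cite{K}) of the affine Weyl translation $t_{\alpha_l^\vee}$, written as a product of two affine simple reflections: the three rightmost factors lift the finite reflection through $\alpha_l$, while the three leftmost factors lift the reflection through the real affine root $\delta-\alpha_l$. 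The translation $t_{\alpha_l^\vee}$ acts on affine real roots by $\beta+m\delta\mapsto\beta+(m-\langle\beta,\alpha_l^\vee\rangle)\delta$, predicting $x_\beta(z)e_{\alpha_l}=z^{\langle\beta,\alpha_l^\vee\rangle}e_{\alpha_l}x_\beta(z)$. In the $C_l$ root system one checks $\langle\alpha_l,\alpha_l^\vee\rangle=2$, $\langle\alpha_{l-1},\alpha_l^\vee\rangle=-1$, and $\langle\alpha_i,\alpha_l^\vee\rangle=0$ for $1\leq i\leq l-2$, matching (b), (c), and (d) exactly. This is the direct analog of Lemma \ref{S662B} in the $B_l^{(1)}$ setting.

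For (a), the three rightmost factors act as the identity on $v_{L_{C_l^{(1)}}(\Lambda_0)}$, since $x_{\alpha_l}(0)v_{L(\Lambda_0)}=0$ (positive-root action) and $x_{-\alpha_l}(0)v_{L(\Lambda_0)}=0$ (integrability at level one together with $\langle\Lambda_0,\alpha_l^\vee\rangle=0$). The next factor $\exp x_{-\alpha_l}(1)$ is also trivial on $v_{L(\Lambda_0)}$ because $\delta-\alpha_l$ is a positive affine real root, so $x_{-\alpha_l}(1)v_{L(\Lambda_0)}=0$. The factor $\exp(-x_{\alpha_l}(-1))$ truncates to $1-x_{\alpha_l}(-1)$: at level $k=1$ with $\alpha_l$ long, relation (\ref{al:S15}) gives $x_{\alpha_l}(z)^2=0$, which combined with $x_{\alpha_l}(m)v_{L(\Lambda_0)}=0$ for $m\geq 0$ forces $(x_{\alpha_l}(-1))^2 v_{L(\Lambda_0)}=0$. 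Finally the leftmost $\exp x_{-\alpha_l}(1)$ is evaluated using the affine bracket $[x_{-\alpha_l}(1),x_{\alpha_l}(-1)]=-\alpha_l^\vee(0)+c$, which on $v_{L(\Lambda_0)}$ reduces to the scalar $1$ at level one; the remaining arithmetic then produces $-x_{\alpha_l}(-1)v_{L_{C_l^{(1)}}(\Lambda_0)}$.

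For (d), $\alpha_i\pm\alpha_l$ is not a root of $C_l$ when $1\leq i\leq l-2$, so $[x_{\pm\alpha_l}(j),x_{\alpha_i}(z)]=0$ by the commutator formula (\ref{al:S14}), and $x_{\alpha_i}(z)$ commutes with every factor of $e_{\alpha_l}$. Part (b) is the classical $\mathfrak{sl}_2$ computation for the copy generated by $x_{\pm\alpha_l}$: since $x_{2\alpha_l}(z)=0$, the operators $\mathrm{ad}(x_{\pm\alpha_l}(j))$ act nilpotently on $x_{\alpha_l}(z)$, and the six factors telescope into the scalar $z^2$. The main obstacle is (c), where each of the six $\mathrm{ad}$'s produces new terms involving vertex operators for the composite roots $\alpha_{l-1}\pm\alpha_l$ (which are genuine roots of $C_l$), and one must verify that these reorganize cleanly into exactly $z^{-1}x_{\alpha_{l-1}}(z)$. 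The cleanest route is to identify the two parenthesized triple subproducts with the Kac lifts of the simple reflections through $\alpha_l$ and through $\delta-\alpha_l$, record the scalar and degree shift each induces on $x_{\alpha_{l-1}}(z)$, and then compose: the composition realizes $t_{\alpha_l^\vee}$ and produces the factor $z^{-\langle\alpha_{l-1},\alpha_l^\vee\rangle}=z^{-1}$.
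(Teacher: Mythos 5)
Your proposal is correct, and it follows exactly the route the paper intends: the paper states this lemma without proof (and proves the $B_l^{(1)}$ analogue, Lemma \ref{S662B}, only by asserting it follows from the definition (\ref{weyl}), cf.\ \cite{K}), namely direct computation with the six exponential factors of the Weyl translation operator, which is what you carry out. Your pairings $\langle\alpha_l,\alpha_l^\vee\rangle=2$, $\langle\alpha_{l-1},\alpha_l^\vee\rangle=-1$, $\langle\alpha_i,\alpha_l^\vee\rangle=0$, and the level-one evaluation $[x_{-\alpha_l}(1),x_{\alpha_l}(-1)]v=v$ all check out, so you have in effect supplied the details the paper omits.
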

\begin{flushright}
$\square$
\end{flushright}
Let $b$ be a monomial   
\begin{align}\label{istoCl}
b&=b(\alpha_{1})\cdots b(\alpha_{l-1})b(\alpha_{l})x_{s\alpha_{l}}(-s)&\\
\nonumber
&=x_{n_{r^{(1)}_{1},1}\alpha_{1}}(m_{r^{(1)}_{1},1})\cdots     x_{n_{1,1}\alpha_{1}}(m_{1,1})\cdots &\\
\nonumber 
& x_{n_{r^{(1)}_{l-1},l-1}\alpha_{l-1}}(m_{r^{(1)}_{l-1},l-1})\cdots     x_{n_{1,l-1}\alpha_{l-1}}(m_{1,l-1}) x_{n_{r^{(1)}_{l},l}\alpha_{l}}(m_{r^{(1)}_{l},l})\cdots  x_{n_{2,l}\alpha_{l}}(m_{2,l})x_{s\alpha_{l}}(-s),&
\end{align}
of dual-charge-type 
\begin{equation*}
\mathfrak{R}=\left(r^{(1)}_{1},\ldots, r^{(2k)}_{1};\ldots; r^{(1)}_{l-1},\ldots, r_{l-1}^{(2k)};r^{(1)}_{l},\ldots, r_{l}^{(s)},0 \ldots, 0\right).
\end{equation*}
As in Subsection \ref{ss:projCl}, let $\pi_{\mathfrak{R}}$ be the projection of principal subspace $W_{L_{C_l^{(1)}}(\Lambda_{0})}\otimes \cdots\otimes W_{L_{C_l^{(1)}}(\Lambda_{0})}$ on the vector space  
\begin{align*}
{W_{L_{C_l^{(1)}}(\Lambda_{0})}}_{(\mu^{(k)}_{1};\ldots ;\mu^{(k)}_{l-1} ;0)}\otimes \cdots \otimes {W_{L_{C_l^{(1)}}(\Lambda_{0})}}_{(\mu^{(s+1)}_{1};\ldots ;\mu^{(s+1)}_{l-1} ; 0)}\otimes\\ \otimes {W_{L_{C_l^{(1)}}(\Lambda_{0})}}_{(\mu^{(s)}_{1};\ldots ;\mu^{(s)}_{l-1} ; r_{1}^{(s)})}\otimes \cdots\otimes {W_{L_{C_l^{(1)}}(\Lambda_{0})}}_{(\mu^{(1)}_{1};\ldots ;\mu^{(1)}_{1} ; r_{1}^{(1)})}. \end{align*}
The projection \begin{equation*}
\pi_{\mathfrak{R}}b\left(v_{L_{C_l^{(1)}}(\Lambda_{0})}\otimes \cdots\otimes v_{L_{C_l^{(1)}}(\Lambda_{0})}\right)\end{equation*}
of the monomial vector $b\left(v_{L_{C_l^{(1)}}(\Lambda_{0})}\otimes
 \cdots \otimes  v_{L_{C_l^{(1)}}(\Lambda_{0})}\right)$ is a coefficient of the generating function
\begin{equation*}\pi_{\mathfrak{R}}x_{n_{r_{1}^{(1)},1}\alpha_{1}}(z_{r_{1}^{(1)},1})\cdots
 x_{n_{1,1}\alpha_{1}}(z_{1,1})\cdots x_{n_{r_{l}^{(1)},l}\alpha_{l}}(z_{r_{l}^{(1)},l}) 
\cdots  x_{n_{2,l}\alpha_{l}}(z_{2,l})\end{equation*}
\begin{equation*}\left(v_{L_{C_l^{(1)}}(\Lambda_{0})}\otimes
 \cdots \otimes  v_{L_{C_l^{(1)}}(\Lambda_{0})}\otimes x_{\alpha_{l}}(-1)v_{L_{C_l^{(1)}}(\Lambda_{0})}\otimes \cdots \otimes  x_{\alpha_{l}}(-1)v_{L_{C_l^{(1)}}(\Lambda_{0})}\right)\end{equation*}
\begin{equation*}=C\cdots x_{n^{(k)}_{r^{(2k-1)}_{1},1}\alpha_{1}}(z_{r_{1}^{(2k-1)},1}) \cdots 
 x_{n^{(k)}_{r^{(2k)}_{1},1}\alpha_{1}}(z_{r^{(2k)}_{1},1})\cdots
 x_{n^{(k)}_{1,1}\alpha_{1}}(z_{1,1})\cdots \end{equation*}
\begin{equation*}x_{n^{(k)}_{r^{(2k-1)}_{l-1},l-1}\alpha_{l-1}}(z_{r_{l-1}^{(2k-1)},l-1}) \cdots 
 x_{n^{(k)}_{r^{(2k)}_{l-1},l-1}\alpha_{l-1}}(z_{r^{(2k)}_{l-1},l-1})\cdots
 x_{n^{(k)}_{1,l}\alpha_{l}}(z_{1,l})v_{L_{C_l^{(1)}}(\Lambda_{0})}\end{equation*}
\begin{equation*}\otimes \cdots \otimes \end{equation*}
\begin{equation*}\otimes x_{n^{(s)}_{r^{(2s-1)}_{1},1}\alpha_{1}}(z_{r_{1}^{(2s-1)},1}) \cdots 
 x_{n^{(s)}_{r^{(2s)}_{1},1}\alpha_{1}}(z_{r^{(2s)}_{1},1})\cdots  x_{n^{(s)}_{1,1}\alpha_{1}}(z_{1,1})\end{equation*}
\begin{equation*}\cdots x_{n^{(s)}_{r^{(2s-1)}_{l-1},l-1}\alpha_{l-1}}(z_{r_{l-1}^{(2s-1)},l-1}) \cdots 
 x_{n^{(s)}_{r^{(2s)}_{l-1},l-1}\alpha_{l-1}}(z_{r^{(2s)}_{l-1},l-1})\cdots
 x_{n^{(s)}_{1,l-1}\alpha_{l-1}}(z_{1,l-1})\end{equation*}\begin{equation*}  x_{n^{(s)}_{r^{(s)}_{l},l}\alpha_{l}}(z_{r_{l}^{(s)},l})\cdots    x_{n^{(s)}_{2,l}\alpha_{l}}(z_{2,l})e_{\alpha_{l}}v_{L_{C_l^{(1)}}(\Lambda_{0})}\end{equation*}
\begin{equation*}\otimes \cdots \otimes \end{equation*}
\begin{equation*}\otimes x_{n_{r^{(1)}_{1},1}^{(1)}\alpha_{1}}(z_{r_{1}^{(1)},1})\cdots 
 x_{n_{r^{(2)}_{1},1}^{(1)}\alpha_{1}}(z_{r_{1}^{(2)},1})\cdots x_{n_{2,1}^{(1)}\alpha_{1}}(z_{2,1})
x_{n_{1,1}^{(1)}\alpha_{1}}(z_{1,1})\end{equation*}
\begin{equation*}\cdots x_{n^{(1)}_{r^{(1)}_{l-1},l-1}\alpha_{l-1}}(z_{r_{l-1}^{(1)},l-1}) \cdots 
 x_{n^{(1)}_{r^{(2)}_{l-1},l-1}\alpha_{l-1}}(z_{r^{(2)}_{l-1},l-1})\cdots
x_{n^{(1)}_{1,l-1}\alpha_{l-1}}(z_{1,l-1})\end{equation*}\begin{equation*}x_{n_{r^{(1)}_{l},l}^{(1)}\alpha_{l}}(z_{r_{l}^{(1)},l})\cdots   x_{n{_{2,l}^{(1)}\alpha_{l}}}(z_{2,l})e_{\alpha_{l}}v_{L_{C_l^{(1)}}(\Lambda_{0})},\end{equation*}
(see (\ref{projekcijaCl})).  
Now if we shift operator $1 \otimes \cdots \otimes  e_{\alpha_{l}} \otimes e_{\alpha_{l}} \otimes \cdots \otimes  e_{\alpha_{l}}$ all the way to the left we get
 \begin{equation*}
(1 \otimes \cdots \otimes  e_{\alpha_{l}} \otimes e_{\alpha_{l}} \otimes \cdots \otimes  e_{\alpha_{l}})\pi_{\mathfrak{R}'} b'\left(v_{L_{C_l^{(1)}}(\Lambda_{0})}\otimes \ldots\otimes v_{L_{C_l^{(1)}}(\Lambda_{0})}\right), 
\end{equation*} 
where 
\begin{equation*}
\mathfrak{R}'=\left(r^{(1)}_{1},\ldots, r^{(2s)}_{1};\cdots; r^{(1)}_{l}-1,\ldots, r_l^{(s)}-1\right)
\end{equation*}
and
\begin{align*}
b'&=b'(\alpha_{ 1})\cdots b'(\alpha_{l-1})b'(\alpha_{l})&\\
&=x_{n_{r^{(1)}_{1}, 1}\alpha_{ 1}}(m_{r^{(1)}_{1}, 1})\cdots x_{n_{1, 1}\alpha_{ 1}}(m_{1, 1})&\\
&=x_{n_{r^{(1)}_{l-1},l-1}\alpha_{l-1}}(m_{r^{(1)}_{l-1},l-1}-n^{(1)}_{r_{l-1}^{(1)},l-1}-\cdots-n^{(s)}_{r_{1}^{(1)},1})&\\
& \ \ \ \ \ \ \ \ \ \ \ \ \ \ \ \ \ \ \ \ \ \ \ \ \ \  \cdots x_{n_{1,l-1}\alpha_{l-1}}(m_{1,l-1}-n^{(1)}_{1,l-1}-\cdots-n^{(s)}_{1,l-1})&\\
& \ \ \ \ \ \ \ \ \ \ \ \ \ \ \ \ \ \ \ \ \ \ \ \ \ \ \  x_{n_{r^{(1)}_{l}\alpha_{l}}}(m_{r^{(1)}_{l},l}+2n_{r^{(1)}_{l}})\cdots   x_{n_{2,l}\alpha_{l}}(m_{2,l}+2n_{1,l})&\\
&=x_{n_{r^{(1)}_{1}, 1}\alpha_{ 1}}(m_{r^{(1)}_{1}, 1})\cdots x_{n_{1, 1}\alpha_{ 1}}(m_{1, 1})&\\
&x_{n_{r^{(1)}_{l-1},l-1}\alpha_{l-1}}(m'_{r^{(1)}_{l-1},l-1})\cdots 
 x_{n_{1,l-1}\alpha_{l-1}}(m'_{1,l-1})x_{n_{r^{(1)}_{l}\alpha_{l}}}(m'_{r^{(1)}_{l},l})\cdots x_{n_{2,l}\alpha_{l}}(m'_{2,l}).&
\end{align*}
\begin{lem}
\label{S66PCl}
If $b$ (\ref{istoCl}) is an element of the set $B_{W_{L_{C_l^{(1)}}(k\Lambda_{0})}}$, then the monomial $b'$, from the above consideration, is an element of the set $B_{W_{L_{C_l^{(1)}}(k\Lambda_{0})}}$. 
\end{lem}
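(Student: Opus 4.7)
The plan is to verify each defining inequality of $B_{W_{L_{C_l^{(1)}}(k\Lambda_{0})}}$ (from Proposition \ref{prop:S22Cl}) holds for $b'$, following the strategy used in Proposition \ref{S66PB} for the $B_l^{(1)}$ case, with the roles of the long and short simple roots interchanged. First I would record the shifts explicitly. From the description of $\pi_{\mathfrak{R}}$ in Subsection \ref{ss:projCl} and the properties of $e_{\alpha_l}$, the operator $1\otimes\cdots\otimes 1\otimes e_{\alpha_l}\otimes\cdots\otimes e_{\alpha_l}$ (with $s$ copies of $e_{\alpha_l}$ on the right) leaves every factor $b(\alpha_i)$ with $1\leq i\leq l-2$ untouched, shifts color-$(l-1)$ energies by $m'_{p,l-1}=m_{p,l-1}-\sum_{t=1}^{s}n^{(t)}_{p,l-1}=m_{p,l-1}-\min\{n_{p,l-1},2s\}$ (since each of the last $s$ tensor factors absorbs at most two copies of $x_{\alpha_{l-1}}(z_{p,l-1})$), and shifts color-$l$ energies by $m'_{p,l}=m_{p,l}+2n_{p,l}$ while removing the rightmost factor $x_{s\alpha_l}(-s)$.

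Next I would dispatch the colors $1\leq i\leq l-2$: the inequalities for $b'(\alpha_i)$ in these colors involve only charges of colors $i$ and $i+1\leq l-1$, all of which coincide with those in $b$, so the conditions hold immediately. For color $l$ in $b'$, the ordering $n_{p,l}\leq n_{1,l}=s$ for every $p\geq 2$ implies $2\min\{n_{p,l},n_{1,l}\}=2n_{p,l}$, so the $p'=1$ term in the original bound on $m_{p,l}$ precisely absorbs the $+2n_{p,l}$ shift, giving $m'_{p,l}\leq -n_{p,l}-\sum_{p>p'>1}2\min\{n_{p,l},n_{p',l}\}$, which is the required bound after reindexing $p'\mapsto p'-1$. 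The difference inequality $m'_{p+1,l}\leq m'_{p,l}-2n_{p,l}$ when $n_{p+1,l}=n_{p,l}$ is preserved because both sides receive the same additive correction.

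The crucial step is color $l-1$. Here the key identity is $\min\{n_{p,l-1},2s\}=\min\{2n_{1,l},n_{p,l-1}\}$, which is precisely the $q=1$ contribution to the sum $\sum_{q=1}^{r_l^{(1)}}\min\{2n_{q,l},n_{p,l-1}\}$ appearing in the original bound on $m_{p,l-1}$, coming from the removed quasi-particle $x_{s\alpha_l}(-s)$. Subtracting this identity from the original inequality yields exactly the defining bound on $m'_{p,l-1}$ with the $q$-sum running over the retained color-$l$ quasi-particles. The difference condition for color $l-1$ is again preserved, since equal consecutive charges $n_{p+1,l-1}=n_{p,l-1}$ produce identical shifts on both sides. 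The main obstacle is this bookkeeping: matching the ``last $s$ tensor factors'' count arising from the projection against the $q=1$ interaction term of the removed quasi-particle. Once this combinatorial identity is established, the verification parallels the three-case analysis (charge $\leq s$, charge $>s$, and equal consecutive charges) carried out in Proposition \ref{S66PB}.
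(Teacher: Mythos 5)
Your proposal is correct and follows the same route as the paper: the paper's proof is a one-line deferral to the case analysis of Proposition \ref{S66PB} with the roles of the colors $l$ and $l-1$ adapted, which is exactly the verification you carry out. Your explicit identification of the shift $\sum_{t=1}^{s}n^{(t)}_{p,l-1}=\min\{n_{p,l-1},2n_{1,l}\}$ with the $q=1$ term of the interaction sum, and of the $+2n_{p,l}$ shift with the $p'=1$ term, is precisely the bookkeeping the paper leaves implicit.
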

\begin{proof}
This Lemma easy follows 
by considering the possible situations for $n_{p,l}$, $2\leq p \leq r^{(1)}_{l}$ and $n_{p,l-1}$, $ 1\leq p \leq r^{(1)}_{l-1}$  
from which follows that $m'_{p,i}$, $l-1 \leq i \leq l$ comply the defining conditions of the set $B_{W_{L_{C_l^{(1)}}(k\Lambda_{0})}}$.
\end{proof}

\subsubsection{\textbf{The proof of linear independence of the set \texorpdfstring{$\mathfrak{B}_{W_{L_{C_l^{(1)}}(k\Lambda_{0})}}$}{BWLCl(1)(kLambda0)}}}\label{S67Cl}
In this section, we prove the following theorem: 
\begin{thm}\label{S67T1Cl}
The set $\mathfrak{B}_{W_{L_{C_l^{(1)}}(k\Lambda_{0})}}$ forms a basis for the principal subspace $W_{L_{C_l^{(1)}}(k\Lambda_{0})}$ of $L_{C_l^{(1)}}\left(k\Lambda_{0}\right)$.
\end{thm}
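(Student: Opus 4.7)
The plan is to mirror the proof of Theorem~\ref{THMB}, with the roles of the extremal colors swapped to reflect the $C_l^{(1)}$ ordering $b(\alpha_1)\cdots b(\alpha_l)$. Since Proposition~\ref{prop:S22Cl} has already supplied the spanning set, only linear independence remains. I would proceed by induction on $l$ (with the base case $l=2$, i.e.\ $C_2^{(1)}\cong B_2^{(1)}$, handled in \cite{Bu}) and, within fixed $l$, by induction on the charge-type $\mathfrak{R}'$ in the lexicographic order ``$<$'', where energies are compared starting from color $i=l$.

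Suppose $\sum_{a\in A} c_a b_a v_{L_{C_l^{(1)}}(k\Lambda_0)}=0$ is a finite dependence relation with all $b_a$ of a common color-type $(r_1,\ldots,r_l)$. Let $b$ be the minimal monomial with $c_a\neq 0$, of dual-charge-type $\mathfrak{R}$ and charge-type $\mathfrak{R}'$. Applying the projection $\pi_{\mathfrak{R}}$ from Subsection~\ref{ss:projCl} annihilates every monomial vector whose charge-type strictly exceeds $\mathfrak{R}'$ (the $C_l^{(1)}$-analogue of Remark~\ref{S62LB}), so one is reduced to a relation among monomials of charge-type exactly $\mathfrak{R}'$. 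Then, setting $s=n_{1,l}$, I apply alternately the intertwining coefficient $A_s$ and the inverse of the simple-current operator $B_s$ defined in Subsections~\ref{ss:intertCl} and \ref{ss:currCl}; by the identity $A_s\pi_{\mathfrak{R}}b\,v_{L_{C_l^{(1)}}(k\Lambda_0)}=B_s\pi_{\mathfrak{R}}b^+v_{L_{C_l^{(1)}}(k\Lambda_0)}$, each iteration shifts the energy of the rightmost color-$l$ quasi-particle upward by one, preserving the charge-type. After $m_{1,l}+s$ such steps, the rightmost factor reaches the minimal allowed energy $x_{s\alpha_l}(-s)$, at which point I apply the tensor-product Weyl-translation operator $e_{\alpha_l}^{\otimes s}$ on the last $s$ tensor factors. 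Lemma~\ref{S66PCl} ensures that the resulting monomials $b'$ still lie in $B_{W_{L_{C_l^{(1)}}(k\Lambda_0)}}$, but with one fewer quasi-particle of color $l$; this is the reduction step that lowers the charge-type.

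Iterating the above procedure, I strip away all color-$l$ quasi-particles, arriving at a dependence relation among monomials involving only colors $1,\ldots,l-1$. As noted in the Introduction, the difference conditions defining $B_{W_{L_{C_l^{(1)}}(k\Lambda_0)}}$ restricted to these colors coincide verbatim with the ones governing Georgiev's quasi-particle basis of the principal subspace of the standard $A_{l-1}^{(1)}$-module at level $2k$; hence linear independence of the surviving vectors follows from \cite{G}, forcing every $c_a$ to vanish. The main technical obstacle is the verification that each Weyl-translation step maps admissible monomials to admissible monomials, i.e.\ the case-by-case inequality analysis in Lemma~\ref{S66PCl}, together with its natural analogues when repeating the procedure for color $l-1$, \ldots\ (which, on account of the shape of the $C_l$ Dynkin diagram and the resulting commutation relations recorded in Lemmas~\ref{lem:S232Cl} and \ref{lem:S235Cl}, are formally identical to Proposition~\ref{S66PB} in the $B_l^{(1)}$ case and may be checked in the same way). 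This completes the induction and yields the theorem.
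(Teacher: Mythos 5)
Your proposal is correct and follows essentially the same route as the paper: project via $\pi_{\mathfrak{R}}$ onto the minimal charge-type, raise the energy of the last color-$l$ quasi-particle with the intertwining coefficient $A_s$ and simple current $B_s$, remove it with $e_{\alpha_l}^{\otimes s}$ using Lemma~\ref{S66PCl}, and reduce to Georgiev's $A_{l-1}^{(1)}$ level-$2k$ result. The only substantive detail you gloss over (and the vestigial ``induction on $l$'' you announce but never use) is that invoking \cite{G} literally requires a further projection $\pi'_{\mathfrak{R}^-}$ of the $k$-fold tensor product of level-two $A_{l-1}^{(1)}$ principal subspaces onto a $2k$-fold tensor product of level-one ones, which the paper spells out explicitly.
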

\begin{proof}
Assume that we have
\begin{equation}\label{S6752Cl}
\sum_{a \in A}
c_{a}b_av_{L_{C_l^{(1)}}(k\Lambda_{0})}=0, 
\end{equation}
where $A$ is a finite non-empty set and 
\begin{align*}
b_a\in  B_{W_{L_{C_l^{(1)}}(k{\Lambda}_{0})}}.
\end{align*}
Assume that all $b_a$ are the same color-type $(r_{1}, \ldots,r_{l})$. 

Let $b$ be the smallest monomial in the linear lexicographic ordering \enquote{$<$}
\begin{align*} 
b&=b(\alpha_{1})\cdots b(\alpha_{l-1})b(\alpha_{l})\\
&=x_{n_{r_{1}^{(1)},1}\alpha_{1}}(m_{r_{1}^{(1)},1})\cdots
 x_{n_{1,1}\alpha_{1}}(m_{1,1})\cdots &\\
 &\cdots x_{n_{r_{l-1}^{(1)},l-1}\alpha_{l-1}}(m_{r_{l-1}^{(1)},l-1})\cdots x_{n_{1,l-1}\alpha_{l-1}}(m_{1,l-1})x_{n_{r_{l}^{(1)},l}\alpha_{l}}(m_{r_{l}^{(1)},l})\cdots x_{n_{1,l}\alpha_{l}}(-j),
\end{align*}
of charge-type 
\begin{equation}\label{S675444Cl}
\left(n_{r_{1}^{(1)},1}, \ldots , 
 n_{1,1};\cdots; n_{r_{l-1}^{(1)},l-1}, \ldots ,n_{1,l-1}; n_{r_{l}^{(1)},l}, \ldots ,n_{1,l}\right)
\end{equation}
such that $c_{a}\neq 0$ and such that, for every $b_{a}$ in (\ref{S6752Cl}), 
we have
\begin{equation*}m_{1,l}\geq -j.\end{equation*}
Denote by 
\begin{equation*}
\mathfrak{R}=\left( r_{1}^{(1)},\ldots , r_{1}^{(2k)};\ldots ; r_{l-1}^{(1)},\ldots , r_{l-1}^{(n_{1,l-1})}; r_{l}^{(1)},\ldots , r_{l}^{(n_{1,l})}\right),
\end{equation*}
the dual-charge-type of $b$.  
For every $1\leq t\leq k$ such that
\begin{equation*}
\mu^{(t)}_{i}=r^{(2t)}_{i}+ r^{(2t-1)}_{i}
\end{equation*}
where $1 \leq i \leq l-1$ let  $\pi_{\mathfrak{R}}$ be the projection of $\underbrace{W_{L_{C_l^{(1)}}({\Lambda}_{0})}\otimes \ldots\otimes W_{L_{C_l^{(1)}}({\Lambda}_{0})}}_{k \ \text{factors}}$ on the vector space 
\begin{equation*}
{W_{L_{C_l^{(1)}}(\Lambda_{0})}}_{(\mu^{(k)}_{1};\ldots ;0)}\otimes \ldots  \otimes {W^C_{L(\Lambda_{0})}}_{(\mu^{(n_{1,l}+1)}_{1};\ldots; 0)}\otimes {W_{L_{C_l^{(1)}}(\Lambda_{0})}}_{(\mu^{(n_{1,l})}_{1};\ldots ; r_{1}^{(n_{1,l})})}\otimes  \cdots {W_{L_{C_l^{(1)}}(\Lambda_{0})}}_{(\mu^{(1)}_{1};\ldots ;r_{l}^{(1)})}.
\end{equation*}
It is not hard to see that the projection $\pi_{\mathfrak{R}}$ maps to zero all monomial vectors $b_av_{L_{C_l^{(1)}}(k{\Lambda}_{0})}$ such that $b_a$ has a larger charge-type in the linear lexicographic ordering \enquote{$<$} than (\ref{S675444Cl}). So, in (\ref{S6752Cl}) we have a projection of $b_{a}v_{L_{C_l^{(1)}}(k{\Lambda}_{0})}$, where $b_{a}$ are of charge-type (\ref{S675444Cl})
\begin{equation}\label{S6733Cl}
\sum_{a\in A} c_a\pi_{\mathfrak{R}}b_{a}v_{L_{C_l^{(1)}}(k\Lambda_{0})}=0. 
\end{equation}
On (\ref{S6733Cl}), we act with operators
\begin{equation*}
A_{n_{1,l}}=1\otimes\cdots \otimes  A_{\omega_{l}} \otimes \underbrace{1 \otimes \cdots \otimes 1}_{n_{1,l}-1 \ \text{factors}}
\end{equation*}
 and
\begin{equation*}
B_{n_{1,l}}=1\otimes\cdots \otimes  e_{\omega_{l}} \otimes \underbrace{1 \otimes \cdots \otimes 1}_{n_{1,l}-1 \ \text{factors}}
\end{equation*}
 until $j$ becomes $- n_{1,l}$. 
In that case, we get 
\begin{equation*}
\sum_{a}
c_{a}\pi_{\mathfrak{R}}b_{a}(\alpha_{1})\cdots b_{a}(\alpha_{l-1})b_{a}^{+}(\alpha_{l})x_{n_{1,l}\alpha_{l}}(-n_{1,l})v_{L_{C_l^{(1)}}(k\Lambda_{0})}=0,
\end{equation*}
where $b_{a}^{+}(\alpha_{l})x_{n_{1,l}\alpha_{l}}(-n_{1,l})$ is of color $i=l$ and 
\begin{equation*}b_{a}(\alpha_{1})\cdots b_{a}(\alpha_{l-1})b_{a}^{+}(\alpha_{l})x_{n_{1,l}\alpha_{l}}(-n_{1,l})v_{L_{C_l^{(1)}}(k\Lambda_{0})} \in \mathfrak{B}_{W_{L_{C_l^{(1)}}(k{\Lambda}_{0})}}.\end{equation*} 
From the subsection \ref{S66Cl} follows 
\begin{align*}
&\pi_{\mathfrak{R}}b_a(\alpha_{1})\cdots b_{a}(\alpha_{l-1})b_a^{+}(\alpha_{l})x_{n_{1,l}\alpha_{l}}(-n_{1,l})v_{L_{C_l^{(1)}}(k\Lambda_{0})}&\\
&=(1\otimes\cdots 1\otimes  e_{\alpha_{l}}\otimes e_{\alpha_{l}} \cdots \otimes   e_{\alpha_{l}})b_a (\alpha_{1})\cdots b'(\alpha_{l-1})b'(\alpha_{l})v_{L_{C_l^{(1)}}(k\Lambda_{0})},&
\end{align*} where $b_a (\alpha_{1})\cdots b_a'(\alpha_{l})$ does not have a quasi-particle of charge $n_{1,l}$. 
 $b(\alpha_{1})\cdots b'(\alpha_{l-1})b'(\alpha_{l})$ is of dual-charge-type
\begin{equation*}
\left( r_{1}^{(1)}, \ldots ,r_{1}^{(2k)}; \ldots ;  r_{l-1}^{(1)}, \ldots ,r_{l-1}^{(2k)};r_{l}^{(1)}-1, \ldots , r_{l}^{(n_{1,l})}-1\right),
 \end{equation*}
and charge-type
\begin{equation*}
\left(n_{r_{1}^{(1)},1}, \ldots , n_{1,1};\ldots ; n_{r_{l-1}^{(1)},l-1}, \ldots ,n_{1,l-1};  n_{r_{l}^{(1)},l}, \ldots 
,n_{2,l}\right),
\end{equation*}
such that\begin{equation*}
\left(n_{r_{i}^{(1)},1}, \ldots , n_{1,1};\ldots ; n_{r_{l}^{(1)},l}, \ldots ,n_{2,l}\right) < 
\left(n_{r_{2}^{(1)},1}, \ldots , n_{1,1}; \ldots ; n_{r_{l}^{(1)},l}, \ldots ,n_{2,l},n_{1,l}\right).
\end{equation*} 
From Remark \ref{S66PCl}, it follows that we get elements from the set $\mathfrak{B}_{W_{L_{C_l^{(1)}}(k{\Lambda}_{0})}}$.
\par We apply the described processes on \ref{S6733Cl}, until we get monomial \enquote{colored} with colors $1 \leq i \leq l-1$. 
Assume that after a finite number of steps we get 
\begin{equation}\label{S6733Cl1}
\sum_{a\in A} c_a\pi_{\mathfrak{R}}b_a(\alpha_{1})\cdots b_a(\alpha_{l-1})v_{L_{C_l^{(1)}}(k{\Lambda}_{0})}=0, 
\end{equation}
where 
\begin{equation*}b_a(\alpha_{1})\cdots b_a(\alpha_{l-1}) \in \mathfrak{A}_{\mathfrak{R}^{-}}\subset \mathfrak{B}_{W_{L_{C_l^{(1)}}(k\Lambda_{0})}}.
\end{equation*}
By $\mathfrak{A}_{\mathfrak{R}^{-}}$ we denote the set of monomial vectors 
of dual-charge type   
\begin{equation*}
\mathfrak{R}^{-}=\left( r_{1}^{(1)}, \ldots ,r_{1}^{(2k)}; \ldots ;  r_{l-1}^{(1)}, \ldots ,r_{l-1}^{(2k)}\right).
\end{equation*}
From the condition $x_{3\alpha_i}(z)=0$, ($1 \leq i \leq l-1$), it follows that monomial vectors 
in \ref{S6733Cl1} are from vector space 
\begin{equation*}
\underbrace{W_{L_{A_{l-1}^{(1)}}(2\Lambda_{0})}\otimes  \ldots  \otimes  W_{L_{A_{l-1}^{(1)}}(2\Lambda_{0})}}_{k \ \text{factors}},
\end{equation*}
where $W_{L_{A_{l-1}^{(1)}} (2\Lambda_{0})}={W_{L_{C_l^{(1)}}(\Lambda_{0})}}_{0 \cdot  \alpha_l}$ is the principal subspace of the standard module $L_{A_{l-1}^{(1)}}(2\Lambda_{0})\subset L_{A_{l-1}^{(1)}}(\Lambda_{0})$ of the affine Lie algebra $A_{l-1}^{(1)}$, with the highest weight vector $v_{L_{A_{l-1}^{(1)}}(\Lambda_0)}=v_{L_{C_{l}^{(1)}}(\Lambda_0)}$. Denote by 
\begin{align*}
{W_{L_{A_{l-1}^{(1)}}\left(2\Lambda_{0}\right)}}_{(u_1, \ldots, u_{l-1})}= {W_{L_{C_{l}^{(1)}}(\Lambda_{0})}}_{u_1\alpha_1+\cdots + u_{l-1}\alpha_{l-1}},
\end{align*}
$\mathfrak{h}$-weighted subspace of $W_{L_{A_{l-1}^{(1)}}(2\Lambda_{0})}$.   
On every factor in the tensor product $W_{L_{A_{l-1}^{(1)}}(2\Lambda_{0})}\otimes \cdots \otimes W_{L_{A_{l-1}^{(1)}}(2\Lambda_{0})}$ of $k$ principal subspaces       $W_{L_{A_{l-1}^{(1)}}(2\Lambda_{0})}$, we have embedding 
\begin{equation*}
{W_{L_{A_{l-1}^{(1)}} (2\Lambda_{0})}}_{(\mu^{(p)}_{1};\ldots; \mu^{(p)}_{l-1})}\hookrightarrow \sum_{\stackrel{u_1,\ldots, u_{l-1},v_1,\ldots, v_{l-1} \in 
\mathbb{N}}{\mu^{(p)}_{i}=u_i+v_i, 1 \leq i \leq l-1}}{W_{L_{A_{l-1}^{(1)}} (\Lambda_{0})}}_{(u_{1};\ldots; u_{l-1})}\otimes  {W_{L_{A_{l-1}^{(1)}} (\Lambda_{0})}}_{((v_{1};\ldots; v_{l-1})},
\end{equation*}
 for $1\leq p \leq k$. 
\par Denote by $\pi'_{\mathfrak{R}^-}$ the projection of the vector space
\begin{equation*}
W_{L_{A_{l-1}^{(1)}}(2\Lambda_{0})}\otimes  \cdots  \otimes  W_{L_{A_{l-1}^{(1)}} (2\Lambda_{0})}\end{equation*}
 on subspace
\begin{equation*}
{W_{L_{A_{l-1}^{(1)}}(\Lambda_{0})}}_{(r_{1}^{(2k)}; \ldots ;r_{l-1}^{(2k)} )}\otimes  {W_{L_{A_{l-1}^{(1)}} (\Lambda_{0})}}_{(r_{1}^{(2k-1)}; \ldots ;r_{l-1}^{(2k-1)})}\otimes  \cdots  \otimes       {W_{L_{A_{l-1}^{(1)}} (\Lambda_{0})}}_{(r_{1}^{(2)}; \ldots ;r_{l-1}^{(2)})}\otimes  {W_{L_{A_{l-1}^{(1)}} (\Lambda_{0})}}_{(r_{1}^{(1)}; \ldots ;r_{l-1}^{( 1)})}.
\end{equation*}
In particular, extending the above projection on the space of formal series with coefficients in
\begin{equation*}
\underbrace{W_{L_{A_{l-1}^{(1)}} (\Lambda_{0})}\otimes \cdots \otimes  W_{L_{A_{l-1}^{(1)}} (\Lambda_{0})}}_{2k \ \text{factors}}\end{equation*}
from the condition $x_{2\alpha_i}(z)=0$ ($1 \leq i \leq l-1$) follows
\begin{align*}
&\pi'_{\mathfrak{R}^-}\left(\pi_{\mathfrak{R}}b_a(\alpha_{1})\cdots b_a(\alpha_{l-1})v_{L_{C_l^{(1)}}k\Lambda_{0}}\right)&\\
 \in & {W_{L_{A_{l-1}^{(1)}} (\Lambda_{0})}}_{(r_{1}^{(2k)}; \ldots ;r_{l-1}^{(2k)} )}\otimes  {W_{L_{A_{l-1}^{(1)}} (\Lambda_{0})}}_{(r_{1}^{(2k-1)}; \ldots ;r_{l-1}^{(2k-1)})}\otimes  \cdots  \otimes&\\
& \otimes        {W_{L_{A_{l-1}^{(1)}} (\Lambda_{0})}}_{(r_{1}^{(2)}; \ldots ;r_{l-1}^{(2)})}\otimes  {W_{L_{A_{l-1}^{(1)}} (\Lambda_{0})}}_{(r_{1}^{(1)}; \ldots ;r_{l-1}^{( 1)})}.&
\end{align*}
Georgiev showed that 
\begin{equation*}
\pi'_{\mathfrak{R}^-}\circ {\pi_{\mathfrak{R}}}_{\left|_{W_{L_{A_{l-1}^{(1)}}(2k\Lambda_{0})}}\right.}\mathfrak{A}_{\mathfrak{R}^-}\end{equation*} is a  linearly independent set. Thus, it follows that the set $c_a=0$ and the desired theorem follows.  
\end{proof}

\subsection{Characters of the principal subspace \texorpdfstring{$W_{L_{C_l^{(1)}}(k\Lambda_{0})}$}{WLCl(1)(kLambda0)}}
In determining the character formulas of $W_{L_{C_l^{(1)}}(k\Lambda_{0})}$ we will use the expressions in Lemma \ref{S7L1Cl}.
\begin{lem}\label{S7L1Cl}
For the given color-type $(r_{1},\ldots, r_{l})$, charge-type      
\begin{equation*}\left(n_{r_{1}^{(1)},1}, \ldots, n_{1,1};\ldots \right. \left. n_{r_{l}^{(1)},l}, \ldots , n_{1,l}\right)\end{equation*} and dual-charge-type   
\begin{equation*}
\left(r_{1}^{(1)},r_{1}^{(2)},\ldots ,r_{1}^{(2k)};\ldots; r_{l}^{(1)}, r_{l}^{(2)}, \ldots ,r_{l}^{(k)}\right), \end{equation*}
we have:
\begin{align}\label{S710Cl}
\sum_{p=1}^{r^{(1)}_{l-1}}\sum_{q=1}^{r^{(1)}_l}\mathrm{min}\{n_{p,l-1},2n_{q,l}\}&=\sum_{s=1}^{k}r^{(s)}_l(r_{l-1}^{(2s-1)}+r_{l-1}^{(2s)}),&\\
\label{S71novoCl}
\sum_{p=1}^{r^{(1)}_{i}}\sum_{q=1}^{r^{(1)}_{i+1}}\mathrm{min}\{n_{p,i},n_{q,i+1}\}&=\sum_{s=1}^{2k}r^{(s)}_ir_{i+1}^{(s)},&\\
\label{S711Cl}\sum_{p=1}^{r_{l}^{(1)}} (\sum_{p>p'>0}2\mathrm{min} \{ n_{p,l},
n_{p',l}\}+n_{p,l})&= \sum_{s=1}^{k}r^{(s)^{2}}_{l},\\
\label{S712Cl}\sum_{p=1}^{r_{i}^{(1)}} (\sum_{p>p'>0}2\mathrm{min} \{ n_{p,i},
n_{p',i}\}+n_{p,i})&= \sum_{s=1}^{2k}r^{(s)^{2}}_{i}, 1 \leq i \leq l-1.&
\end{align}
\end{lem}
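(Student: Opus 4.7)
The strategy is to exploit the fact that, for each color $i$, the sequence $(n_{r_i^{(1)},i}, \ldots, n_{1,i})$ and the sequence $(r_i^{(1)}, r_i^{(2)}, \ldots)$ are conjugate partitions of $r_i$. In particular, using the characterization of conjugation via Young diagrams, one has
\begin{equation*}
r_i^{(s)} \;=\; \#\{\,p : n_{p,i}\geq s\,\}, \qquad n_{p,i} \;=\; \#\{\,s : r_i^{(s)}\geq p\,\}.
\end{equation*}
Combined with this, the only analytic input needed is the elementary identity
\begin{equation*}
\min(a,b) \;=\; \sum_{s\geq 1} [a\geq s]\,[b\geq s], \qquad a,b\in\mathbb{Z}_{\geq 0},
\end{equation*}
where $[\,\cdot\,]$ denotes the Iverson bracket.

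For identity \eqref{S71novoCl}, I would expand $\min\{n_{p,i},n_{q,i+1}\}$ by the Iverson identity and switch the order of summation, obtaining
\begin{equation*}
\sum_{p=1}^{r_i^{(1)}}\sum_{q=1}^{r_{i+1}^{(1)}}\min\{n_{p,i},n_{q,i+1}\}
\;=\;\sum_{s\geq 1}\#\{p:n_{p,i}\geq s\}\cdot\#\{q:n_{q,i+1}\geq s\}
\;=\;\sum_{s=1}^{2k}r_i^{(s)}r_{i+1}^{(s)},
\end{equation*}
where the upper limit $2k$ comes from $n_{p,i},n_{q,i+1}\leq 2k$. Identity \eqref{S710Cl} is handled the same way, with the twist that the factor of $2$ multiplying $n_{q,l}$ needs to be absorbed by reindexing: write $[2n_{q,l}\geq s]=[n_{q,l}\geq\lceil s/2\rceil]$, so that
\begin{equation*}
\sum_{p,q}\min\{n_{p,l-1},2n_{q,l}\}
\;=\;\sum_{s\geq 1}r_{l-1}^{(s)}\,r_l^{(\lceil s/2\rceil)}.
\end{equation*}
Splitting the outer sum over $s$ into odd $s=2s'-1$ and even $s=2s'$ (both yielding $\lceil s/2\rceil=s'$) collects the contributions $r_{l-1}^{(2s'-1)}+r_{l-1}^{(2s')}$ against the common factor $r_l^{(s')}$, exactly as claimed.

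For the diagonal identities \eqref{S711Cl} and \eqref{S712Cl}, the first step is to use monotonicity of the charge sequence: since $p>p'$ forces $n_{p,i}\leq n_{p',i}$, the inner sum collapses to
\begin{equation*}
\sum_{p=1}^{r_i^{(1)}}\Bigl(\,n_{p,i}+\sum_{p>p'>0}2\min\{n_{p,i},n_{p',i}\}\Bigr)
\;=\;\sum_{p=1}^{r_i^{(1)}}\bigl(n_{p,i}+2(p-1)n_{p,i}\bigr)
\;=\;\sum_{p=1}^{r_i^{(1)}}(2p-1)\,n_{p,i}.
\end{equation*}
Now swap the sum using $n_{p,i}=\sum_{s\geq 1}[n_{p,i}\geq s]$: the indices $p$ satisfying $n_{p,i}\geq s$ form the initial segment $\{1,2,\ldots,r_i^{(s)}\}$ (again by monotonicity), so
\begin{equation*}
\sum_{p=1}^{r_i^{(1)}}(2p-1)n_{p,i}
\;=\;\sum_{s\geq 1}\sum_{p=1}^{r_i^{(s)}}(2p-1)
\;=\;\sum_{s\geq 1}\bigl(r_i^{(s)}\bigr)^{2},
\end{equation*}
which is \eqref{S712Cl} (for $1\leq i\leq l-1$, capping $s$ at $2k$) and \eqref{S711Cl} (for $i=l$, capping at $k$).

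The whole argument is essentially a combinatorial bookkeeping exercise, so no real obstacle is expected. The one spot requiring care is the re-indexing by parity in \eqref{S710Cl}, where one must correctly account for both odd and even values of $s$ pairing up under $\lceil s/2\rceil$; getting the upper limit $k$ rather than $2k$ on the right-hand side hinges on this pairing and on the bound $n_{q,l}\leq k$ coming from the quasi-particle relations on level $k$ modules.
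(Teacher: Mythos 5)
Your proof is correct. Each of the four identities checks out: the Iverson expansion $\min(a,b)=\sum_{s\geq 1}[a\geq s][b\geq s]$ together with the conjugate-partition characterization $r_i^{(s)}=\#\{p: n_{p,i}\geq s\}$ immediately gives \eqref{S71novoCl}; the reindexing $[2n_{q,l}\geq s]=[n_{q,l}\geq\lceil s/2\rceil]$ and the odd/even pairing correctly produce the factor $r_{l-1}^{(2s'-1)}+r_{l-1}^{(2s')}$ against $r_l^{(s')}$ in \eqref{S710Cl}; and for the diagonal identities the collapse of $\sum_{p>p'>0}2\min\{n_{p,i},n_{p',i}\}$ to $2(p-1)n_{p,i}$ (valid because the charges are nonincreasing in $p$) followed by $\sum_{p=1}^{r_i^{(s)}}(2p-1)=(r_i^{(s)})^2$ is exactly right. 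The upper limits $2k$ versus $k$ are correctly traced to the charge bounds for short versus long roots in the $C_l^{(1)}$ setting.

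The paper itself gives no argument for this lemma: it only remarks (for the $B_l^{(1)}$ analogue) that the identities "can be easily proved by induction on the level $k$" and closes the $C_l^{(1)}$ version with a box. Your route is therefore genuinely different: rather than inducting on $k$, you give a direct, level-independent Young-diagram computation. This buys transparency (each identity is a two-line double-counting argument, and the parity bookkeeping in \eqref{S710Cl} is made explicit rather than hidden in an inductive step) and it makes clear that the identities are purely statements about conjugate partitions, with the level $k$ entering only through the truncation of the sums. It would be worth stating explicitly that \eqref{S71novoCl} is asserted for $1\leq i\leq l-2$, since for $i=l-1$ the neighboring color is $l$ and the relevant identity is \eqref{S710Cl} instead; your argument already respects this but the restriction should be recorded.
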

\begin{flushright}
$\square$
\end{flushright}
\par Now, from the definition of the set $\mathfrak{B}_{W_{L_{C_l^{(1)}}(k{\Lambda}_{0})}}$ and (\ref{S710Cl}- \ref{S712Cl}), (\ref{S7KB}) follows the character formula of $W_{L_{C_l^{(1)}}(k{\Lambda}_{0})}$:
\begin{thm}\label{uvodCl1}
\begin{align}\nonumber 
&\mathrm{ch} \  W_{L_{C_l^{(1)}}(k\Lambda_{0})}&\\
\nonumber
= &\sum_{\substack{r^{(1)}_{1}\geq \ldots \geq r^{(2k)}_{1}\geq 0}}
\frac{q^{r^{(1)^{2}}_{1}+\cdots +r^{(2k)^{2}}_{1}}}{(q)_{r^{(1)}_{1}-r^{(2)}_{1}}\cdots (q)_{r^{(2k)}_{1}}}y^{r_1}_{1}&\\
\nonumber
 &\sum_{\substack{r^{(1)}_{2}\geq \ldots \geq r^{(2u_2)}_{2}\geq 0}}
\frac{q^{r^{(1)^{2}}_{2}+\cdots +r^{(2k)^{2}}_{2}-r_1^{(1)}r_2^{(1)}-\cdots -r_1^{(2k)}r_2^{(2k)}}}{(q)_{r^{(1)}_{2}-r^{(2)}_{2}}\cdots (q)_{r^{(2k)}_{2}}}y^{r_2}_{2}&\\
\nonumber
&\ \ \ \ \ \ \ \ \cdots& \\
\nonumber
& \sum_{\substack{r^{(1)}_{l-1}\geq \ldots \geq r^{(2u_{l-1})}_{l-1}\geq 0 }}
\frac{q^{r^{(1)^{2}}_{l-1}+\cdots +r^{(2k)^{2}}_{l-1}-r_{l-2}^{(1)}r_{l-1}^{(1)}-\cdots -r_{l-2}^{(2k)}r_{l-1}^{(2k)}}}{(q)_{r^{(1)}_{l-1}-r^{(2)}_{l-1}}\cdots (q)_{r^{(2k)}_{l-1}}}y^{r_{l-1}}_{l-1}&\\
\nonumber
&\sum_{\substack{r^{(1)}_{l}\geq \ldots \geq r^{(u_{l})}_{l}\geq  0 }}
\frac{q^{r^{(1)^{2}}_{l}+\cdots +r^{(k)^{2}}_{l}-r_{l}^{(1)}(r_{l-1}^{(1)}+r_{l-1}^{(2)})
-\cdots -r_{l}^{(k)}(r_{l-1}^{(2k}+r_{l-1}^{(2k)})}}{(q)_{r^{(1)}_{l}-r^{(2)}_{l}}\cdots (q)_{r^{(k)}_{l}}}
y^{r_l}_{l}.&\\
\nonumber&& \ \ \ \ \ \ \ \ \ \ \ \ \ \ \ \ \ \ \ \ \ \ \ \ \ \ \ \ \square
\end{align}
\end{thm}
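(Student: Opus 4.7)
The plan is to derive the character formula directly from the basis $\mathfrak{B}_{W_{L_{C_l^{(1)}}(k\Lambda_{0})}}$ established in Theorem \ref{S67T1Cl}, by parametrizing monomials according to their dual-charge-type and counting, for each fixed dual-charge-type, the number of energy configurations compatible with the difference conditions of Proposition \ref{prop:S22Cl}. Since the set is a basis, we have
\begin{equation*}
\mathrm{ch}\ W_{L_{C_l^{(1)}}(k\Lambda_{0})}=\sum_{b\in B_{W_{L_{C_l^{(1)}}(k\Lambda_{0})}}}q^{-\sum_{i,p}m_{p,i}}\, y_1^{r_1}\cdots y_l^{r_l}.
\end{equation*}
My first step is to group the summation by dual-charge-types $\mathfrak{R}=(r_i^{(s)})$ (with $1\leq s\leq 2k$ for $1\leq i\leq l-1$ and $1\leq s\leq k$ for $i=l$) with the monotonicity $r_i^{(1)}\geq r_i^{(2)}\geq\cdots$, which by conjugation corresponds uniquely to the allowed charge-types.

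Second, for each fixed dual-charge-type I would compute the minimal possible total energy $E_{\min}(\mathfrak{R})$ using the inequalities in Proposition \ref{prop:S22Cl}. Writing $-m_{p,i}$ as a sum of a ``critical'' part (saturating all inequalities) and a nonnegative shift, the critical contribution is
\begin{equation*}
\sum_{p,i}n_{p,i}+\sum_{p>p'}2\,\mathrm{min}\{n_{p,i},n_{p',i}\}-\sum_{i,p,q}\mathrm{min}\{n_{p,i},n_{q,i+1}\}-\sum_{p,q}\mathrm{min}\{n_{p,l-1},2n_{q,l}\},
\end{equation*}
and by the four identities of Lemma \ref{S7L1Cl} this collapses precisely to
\begin{equation*}
\sum_{i=1}^{l-1}\sum_{s=1}^{2k}r_i^{(s)^2}+\sum_{s=1}^{k}r_l^{(s)^2}-\sum_{i=1}^{l-2}\sum_{s=1}^{2k}r_i^{(s)}r_{i+1}^{(s)}-\sum_{s=1}^{k}r_l^{(s)}(r_{l-1}^{(2s-1)}+r_{l-1}^{(2s)}),
\end{equation*}
which is exactly the exponent of $q$ appearing in the claimed formula.

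Third, I would count the remaining free parameters. After translating the energies so that all the staircase constraints $m_{p+1,i}\leq m_{p,i}-2n_{p,i}$ (when $n_{p+1,i}=n_{p,i}$) and the bound involving the neighboring color $i+1$ (resp.\ $l$) are rewritten as nonnegative integer differences, the residual data is, for each color $i$, a partition into at most $r_i^{(s)}-r_i^{(s+1)}$ parts (for each level $s$) together with a partition into at most $r_i^{(\mathrm{top})}$ parts. Applying the standard identity (\ref{S7KB})
\begin{equation*}
\frac{1}{(q)_r}=\sum_{j\geq 0}p_r(j)q^j
\end{equation*}
term by term produces exactly the denominators $(q)_{r_i^{(s)}-r_i^{(s+1)}}$ appearing in the claimed formula. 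Assembling the contributions for all colors $i=1,\ldots,l$ and summing over all admissible dual-charge-types yields the stated expression.

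The main obstacle, as in \cite{Bu} and \cite{G}, is the bookkeeping in the second and third steps: verifying rigorously that the conjugate-partition correspondence between charge-types and dual-charge-types converts the quadratic minima in Lemma \ref{S7L1Cl} into the simple quadratic forms in the $r_i^{(s)}$, and confirming that the free partition counting is unaffected by the coupling terms between neighboring colors $i$ and $i+1$ (respectively $l-1$ and $l$, where the factor $2$ appears because quasi-particles of color $l-1$ have $\alpha_{l-1}(\omega_l^\vee)$-type interaction with those of color $l$). Once this reduction to Lemma \ref{S7L1Cl} and the identity (\ref{S7KB}) is carried out cleanly, the theorem follows by combining the factorized sums.
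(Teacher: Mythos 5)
Your proposal is correct and follows essentially the same route as the paper, which likewise derives the character directly from the quasi-particle basis by grouping monomials by dual-charge-type, evaluating the saturated (minimal) energies via the four identities of Lemma \ref{S7L1Cl}, and counting the residual energy shifts with the partition identity (\ref{S7KB}); your computation of the quadratic exponent agrees with the one stated in the theorem. The paper only sketches this argument in one line, so your write-up is simply a more explicit version of the same proof.
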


\subsection{The basis of the \texorpdfstring{$W_{N_{C_l^{(1)}}(k\Lambda_{0})}$}{BWCN(kLambda0)}}
As in the case of $B_l^{(1)}$ we can prove:
\begin{thm}\label{prop:S22CN} The set $\mathfrak{B}_{W_{N_{C_l^{(1)}}(k{\Lambda}_{0})}}=\left\{bv_{N_{C_l^{(1)}}(k\Lambda_{0})}:b \in B_{W_{N_{C_l^{(1)}}\left(k\Lambda_{0}\right)}}\right\}$, where
\begin{equation}\label{SkupNCl}
B_{W_{N_{C_l^{(1)}}\left(k\Lambda_{0}\right)}}= \bigcup_{\substack{n_{r_{1}^{(1)},1}\leq \ldots \leq n_{1,1}\\\substack{ \ldots \\n_{r_{l-1}^{(1)},l-1}\leq \ldots \leq n_{1,l-1} }\\n_{r_{l}^{(1)},l}\leq \ldots \leq n_{1,l} }}\left(\text{or, equivalently} \ \ \ 
\bigcup_{\substack{r_{1}^{(1)}\geq \cdots \geq 0\\\substack{\cdots \\r_{l-1}^{(1)}\geq \cdots \geq 
0}\\r_{l}^{(1)}\geq \cdots \geq 
0}}\right)
\end{equation}
\begin{equation*}\left\{b\right.=b(\alpha_{1})\cdots b(\alpha_{l})
=x_{n_{r_{1}^{(1)},1}\alpha_{1}}(m_{r_{1}^{(1)},1})\cdots x_{n_{1,1}\alpha_{1}}(m_{1,1})\cdots
x_{n_{r_{l}^{(1)},l}\alpha_{l}}(m_{r_{l}^{(1)},l})\cdots  x_{n_{1,l}\alpha_{l}}(m_{1,l}):\end{equation*}
\begin{align}\nonumber
\left|
\begin{array}{l}
m_{p,l}\leq  -n_{p,l} - \sum_{p>p'>0} 2 \ \text{min}\{n_{p,l}, n_{p',l}\}, \  1\leq  p\leq r_{l}^{(1)};\\
m_{p+1,l}\leq  m_{p,l}-2n_{p,l} \  \text{if} \ n_{p,l}=n_{p+1,l}, \  1\leq  p\leq r_{l}^{(1)}-1;\\
m_{p,l-1}\leq  -n_{p,l-1}+ \sum_{q=1}^{r_{l}^{(1)}}\text{min}\left\{2n_{q,l},n_{p,l-1}\right\}- \sum_{p>p'>0} 
2 \ \text{min}\{n_{p,l}, n_{p',l}\},\\
\ \ \ \ \ \ \ \ \ \ \ \ \ \ \ \ \ \ \ \ \ \ \ \ \ \ \ \ \ \ \ \ \ \ \ \ \ \ \ \ \ \ \ \ \ \ \ \ \ \ \ \ \ \ \ \ \ \ \ \ \ \ \ \ \ \ \ \ \ \ \ \ \ \ \ \ \ \ \ \ \  1\leq  p\leq r_{l}^{(1)};\\
m_{p+1,l-1} \leq   m_{p,l-1}-2n_{p,l-1} \  \text{if} \ n_{p+1,l-1}=n_{p,l-1}, \ 1\leq  p\leq r_{l-1}^{(1)}-1;\\
m_{p,i}\leq  -n_{p,i}+ \sum_{q=1}^{r_{i+1}^{(1)}}\text{min}\left\{n_{q,i+1},n_{p,i}\right\}- \sum_{p>p'>0} 
2 \ \text{min}\{n_{p,i}, n_{p',i}\},\\
\ \ \ \ \ \ \ \ \ \ \ \ \ \ \ \ \ \ \ \ \ \ \ \ \ \ \ \ \ \ \ \ \ \ \ \ \ \ \ \ \ \ \ \ \ \ \ \ \ \ \ \ \ \ \ \ \ \ \ \ \ \ \ \ 1\leq  p\leq r_{i}^{(1)}, \ 1 \leq i \leq l-2;\\
m_{p+1,i} \leq   m_{p,i}-2n_{p,i} \  \text{if} \ n_{p+1,i}=n_{p,i}, \ 1\leq  p\leq r_{i}^{(1)}-1, \ 1 \leq i \leq l-2\\
\end{array}
\right\},
\end{align}
is the base of the principal subspace $W_{N_{C_l^{(1)}}\left(k\Lambda_{0}\right)}$.
\end{thm}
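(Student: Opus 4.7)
The plan is to split the claim into two parts: (a) $\mathfrak{B}_{W_{N_{C_l^{(1)}}(k\Lambda_{0})}}$ spans the principal subspace, and (b) the vectors in $\mathfrak{B}_{W_{N_{C_l^{(1)}}(k\Lambda_{0})}}$ are linearly independent. Both parts recycle material already proved for the standard module $L_{C_l^{(1)}}(k\Lambda_{0})$.

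For (a) I would run verbatim the inductive straightening procedure used to prove Proposition \ref{prop:S22Cl}. That reduction rests only on the intra-color relations (Lemma \ref{lem:S21} and Corollary \ref{cor:S21}) and on the inter-color commutation relations (Lemmas \ref{lem:S235}, \ref{lem:S232Cl}, \ref{lem:S235Cl}). Each of these is a consequence of the commutator formula (\ref{al:S14}) and holds on every $\widehat{\mathfrak{g}}$-module, in particular on $N_{C_l^{(1)}}(k\Lambda_{0})$. Crucially, none of them invokes the vanishing relations $x_{\alpha}(z)^{k+1}=0$ or $x_{\beta}(z)^{2k+1}=0$, which are available only on the integrable quotient. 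Consequently the same induction on charge-type and total energy that produced $B_{W_{L_{C_l^{(1)}}(k\Lambda_{0})}}$ produces $B_{W_{N_{C_l^{(1)}}(k\Lambda_{0})}}$; the only change is that the charge caps $n_{1,i}\leq 2k$ for $1\leq i\leq l-1$ and $n_{1,l}\leq k$ coming from those vanishing relations simply drop out, which accounts for the unrestricted union in (\ref{SkupNCl}).

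For (b) the key observation is that, since $\mathcal{L}(\mathfrak{n}_{+})_{\geq 0}$ annihilates $v_{N_{C_l^{(1)}}(k\Lambda_{0})}$ and $\mathcal{L}(\mathfrak{n}_{+})_{<0}$ acts freely on it, the assignment $b\mapsto bv_{N_{C_l^{(1)}}(k\Lambda_{0})}$ is a linear bijection $U(\mathcal{L}(\mathfrak{n}_{+})_{<0})\cong W_{N_{C_l^{(1)}}(k\Lambda_{0})}$, and this bijection is level-independent. Suppose therefore that
\begin{equation*}
\sum_{a\in A}c_{a}b_{a}v_{N_{C_l^{(1)}}(k\Lambda_{0})}=0
\end{equation*}
is a finite relation with $b_{a}\in B_{W_{N_{C_l^{(1)}}(k\Lambda_{0})}}$. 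Let $K$ be the largest charge $n_{p,i}$ appearing in any $b_{a}$, and choose any integer $k'\geq K$. Level-independence of the PBW isomorphism yields the identical relation
\begin{equation*}
\sum_{a\in A}c_{a}b_{a}v_{N_{C_l^{(1)}}(k'\Lambda_{0})}=0,
\end{equation*}
and applying the canonical surjection $N_{C_l^{(1)}}(k'\Lambda_{0})\to L_{C_l^{(1)}}(k'\Lambda_{0})$ transports it to a relation in $W_{L_{C_l^{(1)}}(k'\Lambda_{0})}$. By the choice of $k'$ each $b_{a}$ now belongs to $B_{W_{L_{C_l^{(1)}}(k'\Lambda_{0})}}$, because the defining energy inequalities of (\ref{SkupNCl}) and (\ref{SkupLCl}) are literally the same and only the charge caps have been added. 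Theorem \ref{S67T1Cl} then forces $c_{a}=0$ for every $a$.

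The main obstacle is concentrated in part (a): one must verify that the Georgiev-type straightening loop genuinely terminates on a generic monomial of $U(\mathcal{L}(\mathfrak{n}_{+})_{<0})v_{N_{C_l^{(1)}}(k\Lambda_{0})}$ even when no a priori bound is imposed on the charges, and that no intermediate rewriting appeals to a relation that is available only on the integrable quotient. Once this bookkeeping is in place, (b) is essentially automatic, because the level-transfer argument above sidesteps the need to redevelop any projection, intertwining-operator, simple-current or ``Weyl-group translation'' machinery directly on $N_{C_l^{(1)}}(k\Lambda_{0})$.
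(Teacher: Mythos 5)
Your proposal is correct and is essentially the paper's own (largely deferred) argument: spanning is obtained by running the same straightening procedure as for Proposition \ref{prop:S22Cl} with the integrability relations (and hence the charge caps) removed, and linear independence is obtained exactly by the level-raising device you describe — transporting a finite relation through the level-independent identification $W_{N_{C_l^{(1)}}(k\Lambda_0)}\cong U(\mathcal{L}(\mathfrak{n}_+)_{<0})$ to $W_{N_{C_l^{(1)}}(k'\Lambda_0)}$ for $k'$ at least the maximal charge, projecting onto $L_{C_l^{(1)}}(k'\Lambda_0)$, and invoking Theorem \ref{S67T1Cl} at level $k'$, which is how the argument cited from \cite{Bu} proceeds. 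The termination issue you flag in part (a) is not a real obstacle: for a fixed color-type $(r_1,\ldots,r_l)$ the admissible charge-types are partitions of the fixed integers $r_i$, so only finitely many charge-raising steps can occur and the straightening loop terminates without any level cap.
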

\begin{flushright}
$\square$
\end{flushright}

\subsection{Characters of the principal subspace \texorpdfstring{$W_{N_{C_l^{(1)}}(k\Lambda_{0})}$}{WNCl(1)(kLambda0)}}
From the definition of the set $\mathfrak{B}_{W_{N_{C_l^{(1)}}(k\Lambda_0)}}$ and (\ref{S710Cl}- \ref{S712Cl}), (\ref{S7KB}) follows the character formula of the principal subspace $W_{N_{C_l^{(1)}}(k\Lambda_0)}$:
\begin{thm}\label{uvodCl2}
\begin{align}\label{characterCl} 
&\mathrm{ch} \  W_{N_{C_l^{(1)}}(k\Lambda_{0})}&\\
\nonumber
= &\sum_{\substack{r^{(1)}_{1}\geq \ldots \geq r^{(2u_1)}_{1}\geq 0\\ u_1\geq0 }}
\frac{q^{r^{(1)^{2}}_{1}+\cdots +r^{(2u_1)^{2}}_{1}}}{(q)_{r^{(1)}_{1}-r^{(2)}_{1}}\cdots (q)_{r^{(2u_1)}_{1}}}y^{r_1}_{1}&\\
\nonumber
 &\sum_{\substack{r^{(1)}_{2}\geq \ldots \geq r^{(2u_2)}_{2}\geq 0\\ u_2\geq 0}}
\frac{q^{r^{(1)^{2}}_{2}+\cdots +r^{(2u_2)^{2}}_{2}-r_1^{(1)}r_2^{(1)}-\cdots -r_1^{(2u_2)}r_2^{(2u_2)}}}{(q)_{r^{(1)}_{2}-r^{(2)}_{2}}\cdots (q)_{r^{(2u_2)}_{2}}}y^{r_2}_{2}&\\
\nonumber
&\ \ \ \ \ \ \ \ \cdots& \\
\nonumber
& \sum_{\substack{r^{(1)}_{l-1}\geq \ldots \geq r^{(2u_{l-1})}_{l-1}\geq 0\\ u_{l-1}\geq0}}
\frac{q^{r^{(1)^{2}}_{l-1}+\cdots +r^{(2u_{l-1})^{2}}_{l-1}-r_{l-2}^{(1)}r_{l-1}^{(1)}-\cdots -r_{l-2}^{(2u_{l-1})}r_{l-1}^{(2u_{l-1})}}}{(q)_{r^{(1)}_{l-1}-r^{(2)}_{l-1}}\cdots (q)_{r^{(2u_{l-1})}_{l-1}}}y^{r_{l-1}}_{l-1}&\\
\nonumber
&\sum_{\substack{r^{(1)}_{l}\geq \ldots \geq r^{(u_{l})}_{l}\geq  0\\ u_{l}\geq 0}}
\frac{q^{r^{(1)^{2}}_{l}+\cdots +r^{(u_{l})^{2}}_{l}-r_{l}^{(1)}(r_{l-1}^{(1)}+r_{l-1}^{(2)})
-\cdots -r_{l}^{(u_{l})}(r_{l-1}^{(2u_{l}-1)}+r_{l-1}^{(2u_{l})})}}{(q)_{r^{(1)}_{l}-r^{(2)}_{l}}\cdots (q)_{r^{(u_{l})}_{l}}}
y^{r_l}_{l}.&
\end{align}
\end{thm}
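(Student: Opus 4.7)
The plan is to read off the character directly from the basis $\mathfrak{B}_{W_{N_{C_l^{(1)}}(k\Lambda_0)}}$ supplied by Theorem \ref{prop:S22CN}, following the same three-step template already used for the $B_l^{(1)}$ case in the preceding subsection (and going back to Georgiev \cite{G}): (i) organize the sum over basis vectors by dual-charge-type $\mathfrak{R}$; (ii) for each $\mathfrak{R}$ identify the minimum total energy permitted by the inequalities defining $B_{W_{N_{C_l^{(1)}}(k\Lambda_0)}}$; (iii) show that the admissible energies above the minimum are parametrized by partitions whose generating function is expressed via (\ref{S7KB}).

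Each basis element $bv_{N_{C_l^{(1)}}(k\Lambda_0)}$ contributes $q^{-\sum_{i,p} m_{p,i}}\,y_1^{r_1}\cdots y_l^{r_l}$ to the character. I would group the sum according to dual-charge-type
$$\mathfrak{R}=(r_{1}^{(1)},\ldots ,r_{1}^{(2u_1)};\ldots; r_{l-1}^{(1)},\ldots ,r_{l-1}^{(2u_{l-1})};r_{l}^{(1)},\ldots , r_{l}^{(u_{l})}),$$
where the summation parameters $u_i$ now range freely over $\mathbb{N}$, since the Verma module imposes no level bound (this is the only difference from the level-$k$ standard module case). The color-type $(r_1,\ldots,r_l)$ is determined by $r_i=\sum_s r_i^{(s)}$, producing the $y$-monomial in the formula.

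For fixed $\mathfrak{R}$, the minimum of $-\sum m_{p,i}$ is attained when all inequalities in (\ref{SkupNCl}) become equalities. Here Lemma \ref{S7L1Cl} does the work: the self-interaction sum $\sum_p\bigl(\sum_{p>p'>0} 2\min\{n_{p,i},n_{p',i}\}+n_{p,i}\bigr)$ collapses to $\sum_s r_i^{(s)^2}$ by (\ref{S711Cl})--(\ref{S712Cl}); the cross-color interactions collapse to $\sum_s r_i^{(s)} r_{i+1}^{(s)}$ for $1\le i\le l-2$ by (\ref{S71novoCl}); and the boundary interaction between colors $l-1$ and $l$ collapses to $\sum_s r_l^{(s)}(r_{l-1}^{(2s-1)}+r_{l-1}^{(2s)})$ by (\ref{S710Cl}). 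Combining these gives precisely the quadratic exponent of $q$ appearing in the right-hand side of (\ref{characterCl}).

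For the deviations, write each $m_{p,i}$ as its minimum value minus a non-negative integer $d_{p,i}$. The condition $m_{p+1,i}\le m_{p,i}-2n_{p,i}$ (whenever $n_{p+1,i}=n_{p,i}$) becomes $d_{p+1,i}\ge d_{p,i}$, so the deviations within each block of consecutive equal-charge quasi-particles form a partition. The block of quasi-particles of color $i$ and charge exactly $s$ has length $r_i^{(s)}-r_i^{(s+1)}$ and contributes, via (\ref{S7KB}), the factor $1/(q)_{r_i^{(s)}-r_i^{(s+1)}}$; the outermost block yields $1/(q)_{r_i^{(u_i)}}$ (for $i=l$) and $1/(q)_{r_i^{(2u_i)}}$ (for $1\leq i\leq l-1$). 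Multiplying all these factors together and summing over $\mathfrak{R}$ assembles the right-hand side of (\ref{characterCl}). No step is technically difficult; the one piece of bookkeeping worth double-checking is the asymmetric treatment of color $l$ (whose dual-charge sequences have length $u_l$ rather than $2u_l$ because $x_{2\alpha_l}(z)\neq 0$ whereas $x_{3\alpha_i}(z)=0$ for $i\le l-1$)—this is precisely the mirror of the boundary argument handled in the $B_l^{(1)}$ subsection, with the roles of colors $l-1$ and $l$ interchanged.
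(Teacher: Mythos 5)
Your proposal is correct and follows exactly the route the paper takes (and merely sketches): the character is read off from the quasi-particle basis of Theorem \ref{prop:S22CN}, with the quadratic exponent coming from the minimal-energy identities of Lemma \ref{S7L1Cl} and the $1/(q)_{r_i^{(s)}-r_i^{(s+1)}}$ factors coming from the deviation partitions via (\ref{S7KB}). Your explicit bookkeeping, including the asymmetric lengths $2u_i$ versus $u_l$ of the dual-charge sequences, supplies precisely the details the paper leaves implicit.
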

\begin{flushright}
$\square$
\end{flushright}
By Poincar\'{e}-Birkhoff-Witt theorem, we obtain the base of the universal enveloping algebra $U(\mathcal{L}(\mathfrak{n}_+)_{<0})$:
\begin{align}\label{102Cl}
&x_{\alpha_1}(m^1_1)\cdots  x_{\alpha_1}(m^{s_1}_1)x_{\alpha_1+\alpha_2}(m^1_2)\cdots x_{\alpha_1+\alpha_2}(m^{s_2}_2)\cdots x_{\alpha_1+\alpha_2+\cdots + \alpha_{l-1}}(m^1_{l-1})\cdots &\\
\nonumber
& \cdots x_{\alpha_1+\alpha_2+\cdots + \alpha_l}(m^{s_{l-1}}_{l-1})x_{\alpha_1+2\alpha_2+\cdots +2\alpha_l}(m^1_{l})\cdots   x_{\alpha_1+2\alpha_2+\cdots +2\alpha_l}(m^{s_{l }}_{l })\cdots &\\
\nonumber
& \cdots x_{\alpha_1+\alpha_2+\cdots +2\alpha_l}(m^1_{2l-2})\cdots   x_{\alpha_1+\alpha_2+\cdots +2\alpha_l}(m^{s_{2l-2}}_{2l-2})\cdots &\\
\nonumber
&  x_{2\alpha_1+2\alpha_2+\cdots +\alpha_l}(m^1_{2l-1})\cdots   x_{2\alpha_1+2\alpha_2+\cdots +\alpha_l}(m^{s_{2l-1}}_{2l-1})\cdots &\\
\nonumber
&\cdots x_{ \alpha_{l-1}}(m^1_{l^2-3})\cdots x_{ \alpha_{l-1}}(m^{s_{l^2-3}}_{l^2-3})\cdots x_{ 2\alpha_{l-1}+\alpha_l}(m^1_{l^2-1})\cdots x_{ 2\alpha_{l-1}+\alpha_l}(m^{s_{l^2-1}}_{l^2-1})&\\
\nonumber
& x_{ \alpha_l}(m^1_{l^2})\cdots x_{ \alpha_l}(m^{s_{l^2}}_{l^2}),&
\end{align}
with $m_i^1\leq \cdots \leq m_i^{s_i}$, $s_i \in \mathbb{N}$ for $i=1,\ldots,l^2$.
Now, we also have a following character formula
\begin{equation} 
\label{KN3Cl}
\textrm{ch} \ W_{N_{C_l^{(1)}}(k\Lambda_{0})}\\
\end{equation} 
\begin{equation*} 
=\prod_{m > 0} \frac{1}{(1-q^my_1)}\frac{1}{(1-q^my_1y_2)}\cdots \frac{1}{(1-q^my_1\cdots y_{l-1})}\frac{1}{(1-q^my_1y_2^2\cdots y_l^2)} 
\end{equation*} 
\begin{equation*}
\ \ \ \ \ \ \ \cdots \frac{1}{(1-q^my_1y_2\cdots y_l^2)}\frac{1}{(1-q^my^2_1y^2_2\cdots y_l )} 
\end{equation*}
\begin{equation*} 
\frac{1}{(1-q^my_2)}\frac{1}{(1-q^my_2y_3)}\cdots \frac{1}{(1-q^my_2\cdots y_{l-1})}\frac{1}{(1-q^my_2y_3^2\cdots y_l^2)} 
\end{equation*} 
\begin{equation*}
\ \ \ \ \ \ \ \cdots \frac{1}{(1-q^my_2y_3\cdots y_l^2)}\frac{1}{(1-q^my^2_2y^2_3\cdots y_l )} 
\end{equation*} 
\begin{equation*}
\ \ \ \ \ \ \ \ \cdots 
\end{equation*} 
\begin{equation*} 
\frac{1}{(1-q^{l-1})}\frac{1}{(1-q^my_{l-1}y^2_l)} \frac{1}{(1-q^my^2_{l-1}y_l)} \frac{1}{(1-q^my_l)} 
\end{equation*} 
From (\ref{characterCl}) and (\ref{KN3Cl}) now follows:
\begin{thm}
\begin{equation*} 
\prod_{m > 0} \frac{1}{(1-q^my_1)}\frac{1}{(1-q^my_1y_2)}\cdots \frac{1}{(1-q^my_1\cdots y_{l-1})}\frac{1}{(1-q^my_1y_2^2\cdots y_l^2)} 
\end{equation*} 
\begin{equation*} 
\ \ \ \ \ \ \ \cdots \frac{1}{(1-q^my_1y_2\cdots y_l^2)}\frac{1}{(1-q^my^2_1y^2_2\cdots y_l )} 
\end{equation*} 
\begin{equation*} 
\frac{1}{(1-q^my_2)}\frac{1}{(1-q^my_2y_3)}\cdots \frac{1}{(1-q^my_2\cdots y_{l-1})}\frac{1}{(1-q^my_2y_3^2\cdots y_l^2)} 
\end{equation*} 
\begin{equation*}
\ \ \ \ \ \ \ \cdots \frac{1}{(1-q^my_2y_3\cdots y_l^2)}\frac{1}{(1-q^my^2_2y^2_3\cdots y_l )} 
\end{equation*} 
\begin{equation*}
\ \ \ \ \ \ \ \ \cdots
\end{equation*} 
\begin{equation*} 
\frac{1}{(1-q^{l-1})}\frac{1}{(1-q^my_{l-1}y^2_l)} \frac{1}{(1-q^my^2_{l-1}y_l)} \frac{1}{(1-q^my_l)} 
\end{equation*}
\begin{equation*}
= \sum_{\substack{r^{(1)}_{1}\geq \ldots \geq r^{(2u_1)}_{1}\geq 0\\ u_1\geq0 }}
\frac{q^{r^{(1)^{2}}_{1}+\cdots +r^{(2u_1)^{2}}_{1}}}{(q)_{r^{(1)}_{1}-r^{(2)}_{1}}\cdots (q)_{r^{(2u_1)}_{1}}}y^{r_1}_{1} 
\end{equation*} 
\begin{equation*}
\sum_{\substack{r^{(1)}_{2}\geq \ldots \geq r^{(2u_2)}_{2}\geq 0\\ u_2\geq 0}}
\frac{q^{r^{(1)^{2}}_{2}+\cdots +r^{(2u_2)^{2}}_{2}-r_1^{(1)}r_2^{(1)}-\cdots -r_1^{(2u_2)}r_2^{(2u_2)}}}{(q)_{r^{(1)}_{2}-r^{(2)}_{2}}\cdots (q)_{r^{(2u_2)}_{2}}}y^{r_2}_{2} 
\end{equation*} 
\begin{equation*}
\ \ \ \ \ \ \ \ \cdots  
\end{equation*} 
\begin{equation*}
\sum_{\substack{r^{(1)}_{l-1}\geq \ldots \geq r^{(2u_{l-1})}_{l-1}\geq 0\\ u_{l-1}\geq0}}
\frac{q^{r^{(1)^{2}}_{l-1}+\cdots +r^{(2u_{l-1})^{2}}_{l-1}-r_{l-2}^{(1)}r_{l-1}^{(1)}-\cdots -r_{l-2}^{(2u_{l-1})}r_{l-1}^{(2u_{l-1})}}}{(q)_{r^{(1)}_{l-1}-r^{(2)}_{l-1}}\cdots (q)_{r^{(2u_{l-1})}_{l-1}}}y^{r_{l-1}}_{l-1}  
\end{equation*} 
\begin{equation*}
\sum_{\substack{r^{(1)}_{l}\geq \ldots \geq r^{(u_{l})}_{l}\geq  0\\ u_{l}\geq 0}}
\frac{q^{r^{(1)^{2}}_{l}+\cdots +r^{(u_{l})^{2}}_{l}-r_{l}^{(1)}(r_{l-1}^{(1)}+r_{l-1}^{(2)})
-\cdots -r_{l}^{(u_{l})}(r_{l-1}^{(2u_{l}-1)}+r_{l-1}^{(2u_{l})})}}{(q)_{r^{(1)}_{l}-r^{(2)}_{l}}\cdots (q)_{r^{(u_{l})}_{l}}}
y^{r_l}_{l}.
\end{equation*}
\end{thm}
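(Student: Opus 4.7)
The plan is to prove the identity by computing $\mathrm{ch}\, W_{N_{C_l^{(1)}}(k\Lambda_{0})}$ in two different ways and observing that both expressions are equal. This is exactly the scheme already used for the $B_l^{(1)}$ identity earlier in the paper, and the infrastructure needed for the $C_l^{(1)}$ case has already been assembled in Theorem \ref{prop:S22CN}, Lemma \ref{S7L1Cl}, and the PBW computation leading to (\ref{KN3Cl}). Therefore no new structural input is required: I only need to match the two character formulas carefully.

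First I would obtain the sum side. By Theorem \ref{prop:S22CN}, the set $\mathfrak{B}_{W_{N_{C_l^{(1)}}(k\Lambda_{0})}}$ is a basis of the principal subspace, so I can enumerate monomials $b \in B_{W_{N_{C_l^{(1)}}\left(k\Lambda_{0}\right)}}$ by running over all possible color-types, charge-types, and dual-charge-types. For each fixed dual-charge-type, the $y$-weight contribution is $y_1^{r_1} \cdots y_l^{r_l}$, the total energy lower bound is computed from the defining inequalities on the $m_{p,i}$ using identities (\ref{S710Cl})--(\ref{S712Cl}) of Lemma \ref{S7L1Cl}, and the counting of energy shifts above the minimal configuration is handled by the partition identity (\ref{S7KB}). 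Assembling these ingredients as in the proof of Theorem \ref{uvodBl2} for the $B_l^{(1)}$ case yields exactly the sum (\ref{characterCl}) appearing on the right-hand side of the claimed identity.

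Next I would obtain the product side. Since $W_{N_{C_l^{(1)}}(k\Lambda_{0})} \cong U(\mathcal{L}(\mathfrak{n}_+)_{<0})$ as vector spaces, I can apply the Poincar\'e--Birkhoff--Witt theorem with the total order on the basis $\{x_\alpha(m): \alpha \in R_+,\, m<0\}$ displayed in (\ref{102Cl}). Each positive root $\alpha = c_1\alpha_1 + \cdots + c_l \alpha_l \in R_+$ of $C_l$ contributes a factor $\prod_{m>0}(1-q^m y_1^{c_1}\cdots y_l^{c_l})^{-1}$, and listing all positive roots of $C_l$ (short roots of the form $\alpha_i+\alpha_{i+1}+\cdots+\alpha_j$ and long roots of the form $\alpha_i+2\alpha_{i+1}+\cdots+2\alpha_{l-1}+\alpha_l$ and $2\alpha_i+\cdots+2\alpha_{l-1}+\alpha_l$) reproduces precisely the infinite product (\ref{KN3Cl}).

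Equating the two expressions gives the theorem. The main obstacle will be purely bookkeeping: one must check that every positive root of $C_l$ appears exactly once as a factor in the displayed product, and that the multi-index exponents on the $y_i$'s in each row of the product match the coefficients of the simple roots in the corresponding positive root. Once this match is verified, and once the combinatorial computation in Lemma \ref{S7L1Cl} is applied correctly (particularly the asymmetric min-formula (\ref{S710Cl}) linking colors $l-1$ and $l$, which accounts for the factor of $2$ distinguishing the short root $\alpha_l$ from the long roots involving $\alpha_l$), the identity follows immediately and carries no independent content beyond Theorems \ref{prop:S22CN} and \ref{uvodCl2}.
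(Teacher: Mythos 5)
Your proposal is correct and follows essentially the same route as the paper: the identity is obtained by equating the quasi-particle character formula of $W_{N_{C_l^{(1)}}(k\Lambda_{0})}$ from Theorem \ref{uvodCl2} (built from the basis of Theorem \ref{prop:S22CN}, Lemma \ref{S7L1Cl} and the partition identity (\ref{S7KB})) with the Poincar\'e--Birkhoff--Witt product formula (\ref{KN3Cl}). The only slip is cosmetic: the roots $\alpha_i+\cdots+\alpha_{j-1}+2\alpha_j+\cdots+2\alpha_{l-1}+\alpha_l$ are short (only $2\alpha_i+\cdots+2\alpha_{l-1}+\alpha_l$ are long in $C_l$), but this does not affect the PBW bookkeeping.
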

\begin{flushright}
$\square$
\end{flushright}

\section*{Acknowledgement}
We would like to thank Mirko Primc for his help, and for important comments and advice concerning the earlier version of the paper. This work has been supported in part by the Croatian Science Foundation under the project 2634. and by University of Rijeka research grant 13.14.1.2.02. 

\end{document}